\documentclass[12 pt]{article}%
\usepackage{amsmath, amsfonts, amsthm, color,latexsym}
\usepackage{amsmath, ulem}
\usepackage{amsfonts}
\usepackage{amssymb}
\usepackage{color, soul}
\usepackage{xcolor}
\usepackage{tikz-cd}
\usepackage[all]{xy}
\usepackage{graphicx}%
\providecommand{\U}[1]{\protect\rule{.1in}{.1in}}
\allowdisplaybreaks[4]
\newtheorem{theorem}{Theorem}[section]
\newtheorem{proposition}[theorem]{Proposition}
\newtheorem{corollary}[theorem]{Corollary}
\newtheorem{example}[theorem]{Example}
\newtheorem{examples}[theorem]{Examples}
\newtheorem{remark}[theorem]{Remark}

\newtheorem{lemma}[theorem]{Lemma}
\newtheorem{final remark}[theorem]{Final Remark}
\newtheorem{definition}[theorem]{Definition}
\allowdisplaybreaks[4]

\newcommand{\cvf}{\overset{\omega}{\rightarrow}}

\newcommand {\R}{\mathbb{R}}
\newcommand {\K} {\mathbb{K}}

\newcommand {\N} {\mathbb{N}}
\newcommand{\norma}[1]{\| #1 \|}
\newcommand{\conj}[2]{\left \{ {#1} \, : \, {#2} \right \}}

\begin{document}

\title{Norm attainment for multilinear operators and polynomials on Banach Spaces and Banach lattices}
\author{Luis A. Garcia\thanks{Supported by Fapemig (Grant APQ-02622-26)},\,  Jos\'e Lucas P. Luiz\thanks{Supported by Fapemig (Grants APQ-02622-26 and APQ-01853-23).}\, and  Vinícius Miranda\thanks{Supported by FAPESP (Grants 2023/12916-1 and 2025/08630-0) and Fapemig (Grants APQ-02622-26 and APQ-01853-23). \newline 2020 Mathematics Subject Classification: 46G25, 46B42, 46B28. \newline Keywords: Norm-attaining multilinear operators, Norm-attaining polynomials, Banach lattices.}}
\date{}
\maketitle

\begin{abstract}
Let $X_1,\dots,X_n$ be reflexive Banach spaces and let $Y$ be a Banach space. Under suitable approximation properties, we prove that every continuous $n$-linear operator from $X_1\times\cdots\times X_n$ into $Y$ attains its norm if and only if every such operator is weakly sequentially continuous. The same equivalence holds for continuous $n$-homogeneous polynomials.
We also establish the corresponding equivalences for positive multilinear operators and homogeneous polynomials between Banach lattices. In particular, every positive bilinear operator from $\ell_4\times\ell_4$ into $L_1([0,1])$ attains its norm, whereas some bilinear operators between the same spaces fail to do so.
\end{abstract}

\section{Introduction}

The study of norm-attaining linear operators plays a central role in the theory of operators between Banach spaces. The problem dates back to the classical works of James \cite{james}, Bishop and Phelps \cite{bishophelps}, and Lindenstrauss \cite{lind63}, which establish deep connections between the geometry of Banach spaces and the behavior of their operators. James’ theorem, for instance, characterizes the reflexivity of a space $X$ by the property that every continuous linear functional on $X$ attains its norm. The Bishop-Phelps Theorem, in turn, ensures that the set of norm-attaining continuous linear functionals on a space $X$ is dense in $X^*$.

A natural line of investigation arising from these results concerns the extent to which norm attainment properties extend from functionals to linear operators. Given Banach spaces $X$ and $Y$, one may ask when the set 
$$ NA(X; Y) = \left \{ T \in \mathcal{L}(X; Y) : T \text{ attains its norm} \right \} $$
is dense in ${\cal L}(X; Y)$ (see, e.g., \cite{bishop}). Beyond density, a stronger and more rigid phenomenon emerges when one asks whether every operator attains its norm, that is, when $NA(X;Y) = {\cal L}(X; Y)$. Two fundamental contributions in this direction are due to Holub \cite{holub} and, later, Mujica \cite{mujica}, who established structural conditions ensuring this equality. This principle was recently extended by Dantas, Jung, and Martínez-Cervantes in \cite[Theorem B]{sheldon}. Their result shows that if $X$ is reflexive and the pair $(X, Y)$ satisfies the bounded  compact approximation property, then every operator $T: X \to Y$ is compact if and only if every operator $T: X \to Y$ attains its norm.

Our first goal is to extend this result to multilinear operators. The passage from the linear to the multilinear setting is, however, far from straightforward. As a first indication of this additional complexity, one observes that the direct analogue of James’ theorem fails dramatically in the bilinear framework. To see this, notice that the projective tensor product $\ell_2 \, \widehat{\otimes}_{\pi} \ell_2$ is not reflexive (see \cite[Corollary 4.24]{ryan}), so there exists a continuous linear functional $\varphi: \ell_2 \hat{\otimes}_{\pi} \ell_2 \to \K$ that does not attain its norm. If $A: \ell_2 \times \ell_2 \to \K$ is the bilinear form given by $A(x,y) = \varphi(x \otimes y)$, then $A$ also fails to attain its norm, since any norm-attaining pair for $A$ would automatically yield a norm-attaining point for 
$\varphi$. Motivated by this contrast, we investigate conditions ensuring universal norm attainment for multilinear mappings. More precisely, we establish that under suitable approximation properties, every $n$-linear operator from $X_1 \times \cdots \times X_n$ into $Y$ attains its norm if and only if every such operator is weakly sequentially continuous. These conditions are also characterized in terms of weak sequential continuity of scalar-valued forms and compactness of $Y$-valued operators. We also establish the corresponding result for homogeneous polynomials in Section 2. Comparing the multilinear and polynomial settings leads to one of the questions discussed in Section 4.

In the context of Banach lattices, however, the situation becomes considerably more subtle. For instance, a lattice version of James' theorem asserts that a Banach lattice $E$ with order-continuous norm is reflexive if and only if every positive linear functional on $E$ attains its norm. This result was obtained independently by Ji, Lee, and Bu in \cite{bujames} and by Oikhberg and Tursi in \cite{oikhberg}. It is important to emphasize that the additional assumption of order continuity of the norm is essential: indeed, in $C(K)$ every positive linear functional attains its norm at the constant function equal to $1$ (see \cite[Exercise 2, p.270]{positiveoperators}), although the space is not reflexive. From the viewpoint of linear operators, the second and third named authors \cite{luizmiranda} established a positive-operator counterpart of the theorem of Dantas, Jung, and Martínez-Cervantes (see \cite{luizmiranda}). Related work on positive versions of the Bishop-Phelps-Bollobás property \cite{acosta1,acosta4} and on norm-attaining Riesz homomorphisms \cite{bilojfa,dantas2022} further illustrates that classical norm-attainment phenomena do not transfer directly to Banach lattices.

Section 3 develops positive counterparts of our multilinear and polynomial results. For this purpose, we introduce the sequentially positive approximation property and prove that a Banach lattice has this property if and only if it is separable, atomic, and has order continuous norm. We then use this property to characterize universal norm attainment for positive multilinear operators and homogeneous polynomials in terms of weak sequential continuity. As a concrete consequence, every positive bilinear operator from $\ell_4\times\ell_4$ into $L_1([0,1])$ attains its norm, whereas some bilinear operators between the same spaces fail to do so. Section 4 concludes the paper with open questions.

Before proceeding, we introduce some definitions and notation. Throughout this paper, $\K$ denotes $\R$ or $\mathbb{C}$, $X$ and $Y$ denote Banach spaces over $\K$, and $E$ and $F$ denote (real) Banach lattices. The symbols ${\rm id}_X$, $E^+$, $B_X$, and $S_X$ denote, respectively, the identity operator on $X$, the positive cone of $E$, the closed unit ball of $X$, and the unit sphere of $X$.
Given Banach spaces $X_1, \dots, X_n$ and $Y$, the Banach space of all continuous $n$-linear operators $A\colon X_1 \times \cdots \times X_n \to Y$ is denoted by $\mathcal{L}(X_1, \dots, X_n; Y)$. In the case that $F = \K$, we simply write $\mathcal{L}(X_1, \dots, X_n)$. The norm of $A$ is defined to be
$$ \norma{A} = \sup \conj{\norma{A(x_1, \dots, x_n)}}{x_1 \in B_{X_1}, \dots, x_n \in B_{X_n}}. $$
Given Banach lattices $E_1, \dots, E_n$ and $F$, a $n$-linear operator $A\colon E_1 \times \cdots \times E_n \to F$ is said to be positive if $A(x_1, \dots, x_n) \geq 0$ for all $x_1 \in E_1^+, \dots, x_n \in E_n^+$. The difference of two positive $n$-linear operators is called a regular $n$-linear operator, and the set of all regular $n$-linear operators from $E_1 \times \cdots \times E_n $ into $F$ is denoted by $\mathcal{L}^r(E_1, \dots, E_n; F)$. Whenever $F$ is Dedekind complete, $\mathcal{L}^r(E_1, \dots, E_n; F)$ is a Banach lattice with the regular norm $\norma{A}_r = \norma{|A|}$, where $|A|$ denotes the absolute value of the regular $n$-linear operator $A\colon E_1 \times \cdots \times E_n \to F$.

A mapping $P:X\to Y$ is an $n$-homogeneous continuous polynomial if there exists a unique symmetric continuous $n$-linear operator $T_P:X\times\cdots\times X\to Y$ such that
$P(x)=T_P(x,\ldots,x)$ for every $x\in X.$
The space of all such polynomials, endowed with the sup norm, is denoted by $\mathcal P(^n X;Y)$. 
Moreover, the norms of $P$ and its associated multilinear mapping $T_P$ satisfy $$\displaystyle \|P\|\le \|T_P\|\le \frac{n^n}{n!}\|P\|.$$
In the setting of Banach lattices, an $n$-homogeneous polynomial $P\colon E \to F$ is positive  if its associated symmetric multilinear operator $T_P\colon E^n \to F$ is positive. The difference of two positive $n$-homogeneous polynomials is called a regular homogeneous polynomial, and the set of such
polynomials is denoted by $\mathcal{P}^r(^nE; F)$. When $F$ is the scalar field we simply write $\mathcal{P}^r(^nE)$. If $E$ and $F$ are Banach lattices with $F$ Dedekind complete, then $\mathcal{P}^r(^nE; F)$ is a Banach lattice with the regular norm $\norma{P}_r = \norma{|P|}$, where $|P|$ denotes the absolute value of the regular $n$-homogeneous polynomial $P\colon E \to F$. 

 We refer the reader to \cite{positiveoperators, meyer} for background on Banach lattices, to \cite{albiac} for Banach space theory, to \cite{dineen, mujica} for polynomials on Banach spaces, to 
 \cite{ryan} for tensor products between Banach spaces, to \cite{bubuskes, loane} for positive/regular multilinear operators or polynomials, and to \cite{bubuskes, frem2}
for tensor products between vector lattices.

\section{The Banach space setting}\label{section2}

As announced in the Introduction, our first objective is to provide a multilinear version of \cite[Theorem B]{sheldon}. To address this problem, we begin by examining how the proof of that result can be adapted to the multilinear setting. A key starting point is the observation that, in multilinear extensions of results involving compact linear operators, compactness is naturally replaced by the class of weakly sequentially continuous multilinear mappings.
To this end, we recall the following definitions. An $n$-linear continuous operator $A: X_1 \times \cdots \times X_n \to Y$ is said to \\
(i) be {\bf norm-attaining} if there are $x_1 \in B_{X_1}, \dots, x_n \in B_{X_n}$ such that
$$ \norma{A(x_1, \dots, x_n)} = \norma{A}.$$
\noindent (ii) be {\bf weakly sequentially continuous} if $A(x_{1,k}, \dots, x_{n,k}) \to A(x_1, \dots, x_n)$ in $Y$ whenever $(x_{j,k})_k \subset X_j$ converges to $x_j \in X_j$ in the weak topology for every $j =1, \dots, n$. 
The collection of such operators is denoted by $\mathcal{L}_{wsc} (X_1, \dots, X_n; Y)$. \\
\noindent (iii) be {\bf compact} if $A(B_{X_1} \times \cdots \times B_{X_n})$ is a relatively compact subset of $Y$. The collection of all such operators is denoted by $\mathcal{K}(X_1, \ldots, X_n; Y)$. \\
\noindent (iv) have {\bf finite type} if there are vectors $y_1, \dots, y_k \in Y$ and linear continuous functionals $\varphi_{j,1}, \dots, \varphi_{j,k} \in X_j^*$ for every $j =1, \dots, n$ such that
$$ A(x_1, \dots, x_n) = \sum_{i=1}^k \varphi_{1,i}(x_1)\cdots \varphi_{n,i}(x_n) y_i  $$
for all $x_1 \in X_1, \dots, x_n \in X_n$. The collection of all  $n$-linear operators with finite type from $X_1 \times \cdots \times X_n$ into $Y$ is denoted by $\mathcal{F}(X_1, \dots, X_n; Y)$.


\medskip

The main result of this section is stated as follows.

\begin{theorem} \label{maintheo}
    Let $X_1, \dots, X_n$ be reflexive Banach spaces and $Y$ be a Banach space.  Assume that for every $A \in \mathcal{L}(X_1, \dots, X_n; Y)$, there exists $\lambda = \lambda(A) \geq 1$ such that  for all compact sets $K_1 \subset X_1, \dots, K_n \subset X_n$ and every $\varepsilon > 0$ there exists $B \in \mathcal{F}(X_1, \dots, X_n; Y)$ with $\norma{B} \leq \lambda$ such that
    $$ \norma{A(x_1, \dots, x_n) - B(x_1, \dots, x_n)} < \varepsilon  $$
    for all $x_1 \in K_1,\dots x_n \in K_n$.
    Then,  the following are equivalent: \\
    {\rm (1)} Every $n$-linear continuous operator $A: X_1 \times \cdots \times X_n \to Y$ is weakly sequentially continuous. \\
    {\rm (2)} Every $n$-linear continuous operator $A: X_1 \times \cdots \times X_n \to Y$ is norm-attaining. \\
    {\rm (3)} Every $n$-linear continuous form $\varphi: X_1 \times \cdots \times X_n \to \K$ is weakly sequentially continuous and every $n$-linear continuous operator $A: X_1 \times \cdots \times X_n \to Y$ is compact.
\end{theorem}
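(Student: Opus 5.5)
We will prove the cycle (1)$\Rightarrow$(2)$\Rightarrow$(3)$\Rightarrow$(1).

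\emph{(1)$\Rightarrow$(2).} Let $A$ be given and choose $(x_{1,k},\dots,x_{n,k})\in B_{X_1}\times\cdots\times B_{X_n}$ with $\norma{A(x_{1,k},\dots,x_{n,k})}\to\norma{A}$. Each $X_j$ is reflexive, so by Eberlein--\v{S}mulian we can pass to a common subsequence along which $x_{j,k}\to x_j$ weakly, with $x_j\in B_{X_j}$. Weak sequential continuity of $A$ gives $A(x_{1,k},\dots,x_{n,k})\to A(x_1,\dots,x_n)$ in norm. Hence $\norma{A(x_1,\dots,x_n)}=\norma{A}$.

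\emph{(3)$\Rightarrow$(1).} Let $A$ be given and suppose $x_{j,k}\to x_j$ weakly for each $j$. For every $y^*\in Y^*$ the form $y^*\circ A$ is weakly sequentially continuous, so $A(x_{1,k},\dots,x_{n,k})\to A(x_1,\dots,x_n)$ weakly. The sequence lies in a multiple of $A(B_{X_1}\times\cdots\times B_{X_n})$, which is relatively compact because $A$ is compact. On a norm-compact set the weak and norm topologies agree. So every subsequence has a further subsequence converging in norm, and its limit must be the weak limit $A(x_1,\dots,x_n)$. Therefore the whole sequence converges in norm.

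\emph{(2)$\Rightarrow$(3).} This is the main obstacle, and we argue by contraposition, following \cite[Theorem B]{sheldon}. First suppose some $A$ is not weakly sequentially continuous; this covers both the case of a non-wsc form and, by the previous paragraph, the case where all forms are wsc but some $A$ is not compact. Then there are $x_{j,k}\rightharpoonup x_j$ with $\norma{A(x_{1,k},\dots,x_{n,k})-A(x_1,\dots,x_n)}\ge\delta>0$. Expanding multilinearly with $x_{j,k}=x_j+u_{j,k}$ and $u_{j,k}\rightharpoonup0$, we can pass to subsequences and select a nonempty index set $J$. Fixing the $x_j$ for $j\notin J$, we obtain a $|J|$-linear map evaluated on weakly null sequences $(u_{j,k})_{j\in J}$, bounded away from $0$.

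We then build a non-norm-attaining operator in the spirit of the linear proof. Reflexivity lets us take the weakly null $u_{j,k}$ to be basic sequences. The approximation hypothesis, with compact sets such as $\{0\}\cup\{u_{j,k}/k\}$ or finite sets, lets us construct inductively finite-type operators $B_m$ with $\norma{B_m}\le\lambda$. These $B_m$ approximate $A$ on finitely many points and vanish, or nearly vanish, on the previously used directions. We then form a convergent weighted series
\[
T=\sum_m a_m\,(B_{m+1}-B_m)\quad\text{or}\quad T=\sum_m c_m A\circ(P_{1,m},\dots,P_{n,m}),
\]
with strictly increasing weights $c_m\uparrow c$. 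The norm is approached only along the weakly null ``tail'' directions, so that $\norma{T}=c$ but $\norma{T(z)}<c$ at every point $z$: each fixed $z$ is nearly orthogonal, after approximation, to the late blocks that carry weight near $c$.

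This construction gives $T\in\mathcal{L}(X_1,\dots,X_n;Y)$ not attaining its norm, contradicting (2). The delicate point, and what we expect to be the crux, is controlling the cross terms between blocks with the uniform bound $\lambda$. To handle it, we would first try to reduce to the bilinear-type case where only one variable moves weakly null while the others are fixed. That case is essentially linear and can be handled by the argument of \cite{sheldon} applied to a linear operator $u\mapsto A(x_1,\dots,u,\dots,x_n)$ composed with finite-type approximants. In the case of several moving variables, we would work with the weakly null sequences along a diagonal subsequence.
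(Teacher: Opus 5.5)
Your (1)$\Rightarrow$(2) is the paper's argument. Your (3)$\Rightarrow$(1) is correct and more elementary than the paper's Lemma~\ref{lemmafin}. Instead of going through the projective tensor product and the complete continuity of the compact linearization, you get weak convergence of $A(x_{1,k},\dots,x_{n,k})$ from the forms $y^*\circ A$, and relative compactness upgrades it to norm convergence. One small slip: the fact that a non-compact $A$ is not weakly sequentially continuous comes from reflexivity, as in the paper's (1)$\Rightarrow$(3), not from your (3)$\Rightarrow$(1) paragraph.

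The genuine gap is the core of (2)$\Rightarrow$(3): producing a non-norm-attaining operator from a non-weakly-sequentially-continuous one. What you give is a plan, not an argument.
\begin{itemize}
\item The series $T=\sum_m a_m(B_{m+1}-B_m)$ or $\sum_m c_mA\circ(P_{1,m},\dots,P_{n,m})$ is never specified. The hypothesis supplies no finite-rank maps $P_{j,m}$ on the $X_j$; a reflexive space need not have a basis, or even the approximation property.
\item The finite-type $B$'s that the hypothesis does supply are controlled only by $\|B\|\le\lambda$ and by their values on the chosen compact sets. Nothing forces them to vanish on earlier directions.
\item Consequently the key claims $\|T\|=c$ and $\|T(z)\|<c$ for every $z$ are asserted, not proved, and you yourself leave the cross terms open.
\item Your fallback, reducing to one weakly null variable with the others fixed, cannot work in general. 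For $A(x,y)=\sum_i x_iy_i$ on $\ell_2\times\ell_2$, every partial map $y\mapsto A(x,y)$ is a bounded functional, hence weakly continuous, yet $A(e_k,e_k)=1\not\to 0$.
\end{itemize}
So the failure of weak sequential continuity can be genuinely multi-variable; for forms it always is. That is exactly the case you defer to an unspecified ``diagonal subsequence''.

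The paper never constructs the bad operator; it linearizes instead.
\begin{enumerate}
\item Set $C=B_{\mathcal{L}_{wsc}(X_1,\dots,X_n;Y)}$, a norm-closed convex set. Applying \cite[Theorem A]{sheldon} to $C^\otimes\subset\mathcal{L}(X_1\widehat{\otimes}_\pi\cdots\widehat{\otimes}_\pi X_n;Y)$ shows that if every $n$-linear operator attains its norm, then $C^\otimes$ is sequentially SOT-closed (Lemma~\ref{lema1}).
\item The approximation hypothesis is used only to show that $\lambda^{-1}A^\otimes$ lies in the SOT-closure of $C^\otimes$ whenever $\|A\|\le 1$. Finite-type operators are weakly sequentially continuous, and approximation on finitely many elementary tensors passes to arbitrary points of the tensor product by density and the uniform bound $\lambda$.
\item This gives only a net, so Lemma~\ref{lema2} is needed. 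Reflexive spaces have the separable complementation property \cite{amirlind}. One restricts to separable complemented subspaces containing the witnessing weakly convergent sequences, extracts a pointwise convergent sequence from the net there, and transports it back with the projections. This upgrades sequential SOT-closedness of $C^\otimes$ to SOT-closedness.
\item Hence $\lambda^{-1}A^\otimes\in C^\otimes$, and $A$ is weakly sequentially continuous.
\end{enumerate}
Applying \cite[Theorem A]{sheldon} to the linearization, together with this net-to-sequence step, is what your proposal is missing.
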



To prove Theorem \ref{maintheo}, we begin by recalling the definition of the {\bf strong operator topology} ($SOT$, for short) on $\mathcal{L}(X; Y)$. Its basic neighborhoods are of the form
$$N(T; F,\varepsilon)=\conj{S \in \mathcal{L}(X; Y)}{\norma{T(x) - S(x)} < \varepsilon \, \text{ for every } x \in F},
$$
where $\varepsilon > 0$, $F \subset X$ is a finite set, and $T\in \mathcal{L}(X;Y)$. 

In Remark \ref{remark} below, we will provide  a characterization of convergence in the strong operator topology for spaces of operators defined on the projective tensor products. For this purpose, we recall that the
$n$-fold projective tensor product $X_1 \widehat{\otimes}_\pi \cdots \widehat{\otimes}_\pi X_n$ is defined as the completion of the algebraic tensor product $X_1 \otimes \cdots \otimes X_n$ with respect to the projective tensor norm
$$
\|u\|_{\pi} := \inf \left\{ \sum_{i=1}^k \|x_{1,i}\|\cdots\|x_{n,i}\| : 
u = \sum_{i=1}^k x_{1,i}\otimes \cdots \otimes x_{n,i} \right\}.
$$
In this framework, every continuous $n$-linear operator $A:X_1 \times \cdots \times X_n \to Y$ admits a unique linearization
$A^\otimes : X_1 \widehat{\otimes}_\pi \cdots \widehat{\otimes}_\pi X_n \to Y$
satisfying
$ A(x_1,\dots,x_n)=A^\otimes(x_1\otimes \cdots \otimes x_n)$
for all $x_1\in X_1,\dots,x_n\in X_n$. Moreover, the correspondence
$A \mapsto A^\otimes$
defines an isometric isomorphism between $\mathcal{L}(X_1,\dots,X_n;Y)$ and $\mathcal{L}(X_1 \widehat{\otimes}_\pi \cdots \widehat{\otimes}_\pi X_n;Y)$. 

\begin{remark} \label{remark}
    Let $X_1, \dots, X_n$ and $Y$ be Banach spaces, $(A_\alpha)_{\alpha\in \Lambda}$ be a norm bounded net in $\mathcal{L}(X_1,\dots,X_n;Y)$, and $A \in \mathcal{L}(X_1,\dots,X_n;Y)$. Then, $A_\alpha^\otimes \to A^\otimes$ in the $SOT$ of $\mathcal{L}(X_1 \widehat{\otimes}_\pi \cdots \widehat{\otimes}_\pi X_n; Y)$ if and only if $(A_\alpha)_{\alpha\in \Lambda}$ converges pointwise to $A$, that is
$$ \norma{A_\alpha(x_1, \dots, x_n) - A(x_1, \dots, x_n)} \to 0 $$
for all $x_1 \in X_1, \dots, x_n \in X_n$. {\rm To see this, assume that $(A_\alpha)_{\alpha\in\Lambda}$ converges pointwise to $A$
and define $\displaystyle M: = \sup_{\alpha} \norma{A_\alpha} < \infty$. Let $z \in X_1 \widehat{\otimes}_\pi \cdots \widehat{\otimes}_\pi X_n$ and $\varepsilon >0$ be given. Since $X_1 \otimes \cdots \otimes X_n$ is dense in $X_1 \widehat{\otimes}_\pi \cdots \widehat{\otimes}_\pi X_n$, there exists $w := \displaystyle \sum_{i=1}^k x_{1,i} \otimes \cdots \otimes x_{n,i}$ such that
$$ \norma{z - w}_{\pi} < \frac{\varepsilon}{2(\norma{A} + M + 1)}. $$
Since
$ \displaystyle\lim_{\alpha} \norma{A_\alpha(x_{1,i} , \dots ,x_{n,i}) - A(x_{1,i} , \dots ,x_{n,i})} = 0 $
for every $i=1, \dots, k$, there is $\alpha_0 \in \Lambda$ such that
$$ \norma{A_\alpha(x_{1,i} , \dots ,x_{n,i}) - A(x_{1,i} , \dots ,x_{n,i})} < \frac{\varepsilon}{2k} $$
for every $\alpha \geq \alpha_0$ and every $i = 1, \dots, k$. Hence
\begin{align*}
    \norma{A_\alpha^\otimes(z) - A^\otimes(z)} & \leq \norma{A_\alpha^\otimes(z) - A_\alpha^\otimes(w)} + \norma{A_\alpha^\otimes(w) - A^\otimes(w)} + \norma{A^\otimes(w) - A^\otimes(z)} \\
    & \leq [\norma{A_\alpha^\otimes} + \norma{A^\otimes}] \norma{z-w}_{\pi} + \sum_{i=1}^k \norma{A_\alpha(x_{1,i} , \dots ,x_{n,i}) - A(x_{1,i} , \dots ,x_{n,i})} \\
    & < \varepsilon
\end{align*}
for all  $\alpha \geq \alpha_0$. Hence $A_\alpha^\otimes \to A^\otimes$ in the $SOT$ of $\mathcal{L}(X_1\widehat{\otimes}_{\pi} \cdots \widehat{\otimes}_{\pi} X_n; Y)$. The converse is immediate.}
\end{remark}

The next two results are obtained by adapting the arguments of Theorem A and Lemma 3.4 in \cite{sheldon} to the multilinear setting. For a subset $C \subset \mathcal{L}(X_1, \dots, X_n; Y)$, we define 
$ C^\otimes := \conj{A^\otimes}{A \in C}. $

\begin{lemma} \label{lema1}
Let $X_1, \dots, X_n$ and $Y$ be Banach spaces and let $C \subset \mathcal{L}(X_1, \dots, X_n; Y)$ be a norm-closed convex subset. If $C^\otimes$ is not sequentially $SOT$-closed in ${\cal L}(X_1 \widehat{\otimes}_{\pi} \cdots \widehat{\otimes}_{\pi} X_n; Y)$,  then  there exists a non-norm attaining $n$-linear operator $A: X_1 \times \cdots \times X_n \to Y$.
\end{lemma}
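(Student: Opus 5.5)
The plan is to reduce Lemma \ref{lema1} to its linear counterpart, \cite[Theorem A]{sheldon}, by passing to the projective tensor product $Z := X_1 \widehat{\otimes}_{\pi} \cdots \widehat{\otimes}_{\pi} X_n$. That result says: if $C'$ is a norm-closed convex subset of $\mathcal{L}(Z;Y)$ that is not sequentially $SOT$-closed, then $\mathcal{L}(Z;Y)$ contains an operator that does not attain its norm. I would apply it to $C' = C^\otimes$ and then transfer the conclusion back to multilinear operators.

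\textbf{Step 1: $C^\otimes$ is a norm-closed convex set.} The map $A \mapsto A^\otimes$ is a linear isometric isomorphism from $\mathcal{L}(X_1, \dots, X_n; Y)$ onto $\mathcal{L}(Z;Y)$. It therefore carries convex sets to convex sets and norm-closed sets to norm-closed sets, so $C^\otimes$ is convex and norm-closed. By hypothesis it is not sequentially $SOT$-closed, so \cite[Theorem A]{sheldon} gives a linear operator $L \in \mathcal{L}(Z;Y)$ that does not attain its norm on $B_Z$.

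\textbf{Step 2: pull back to a multilinear operator.} By surjectivity of the correspondence, $L = A^\otimes$ for a unique $A \in \mathcal{L}(X_1,\dots,X_n;Y)$, and $\norma{A} = \norma{L}$. Suppose $A$ attained its norm at some $x_j \in B_{X_j}$. Then $z = x_1 \otimes \cdots \otimes x_n$ satisfies $\norma{z}_\pi \le \norma{x_1}\cdots\norma{x_n} \le 1$. It would also satisfy
$$\norma{L(z)} = \norma{A(x_1,\dots,x_n)} = \norma{A} = \norma{L},$$
so $L$ would attain its norm, a contradiction. Hence $A$ is a non-norm-attaining $n$-linear operator.

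\textbf{Main obstacle.} The only delicate point is the exact form of \cite[Theorem A]{sheldon}: it must apply to an arbitrary domain Banach space, here $Z$, with no reflexivity assumption. If it does not, I would reprove it directly in $\mathcal{L}(Z;Y)$, using Remark \ref{remark} to convert $SOT$ convergence of the bounded net $A_k^\otimes$ into pointwise convergence of $A_k$. The setup would be as follows. Take $A_k \in C$ with $A_k^\otimes \to T$ in $SOT$ and $T \notin C^\otimes$. The sequence is bounded by the uniform boundedness principle. Since $C^\otimes$ is closed and convex, there is $\delta > 0$ with $\norma{T - S} \ge \delta$ for all $S \in C^\otimes$, in particular for all convex combinations of the $A_k^\otimes$. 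I would then build a weighted series $\sum_k \lambda_k (T - A_k^\otimes)$ whose norm is approached only along sequences escaping every fixed finite-dimensional piece, following the construction in \cite{sheldon}. In either route, the multilinear-to-linear transfer of Step 2 is routine.
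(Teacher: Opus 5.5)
Your proposal is correct and is exactly the paper's argument. Transport $C$ to the norm-closed convex set $C^\otimes$ via the isometric isomorphism $A\mapsto A^\otimes$, apply \cite[Theorem A]{sheldon} on $X_1 \widehat{\otimes}_{\pi} \cdots \widehat{\otimes}_{\pi} X_n$ to get a non-norm-attaining $T$, and note that the $n$-linear $A$ with $A^\otimes=T$ cannot attain its norm because $\norma{x_1\otimes\cdots\otimes x_n}_\pi\le 1$ for $x_j\in B_{X_j}$. Your fallback is not needed, since \cite[Theorem A]{sheldon} carries no reflexivity assumption on the domain (and for a nonreflexive domain its conclusion follows immediately from James' theorem anyway).
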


\begin{proof} It is easy to see that $C^\otimes$ is a norm-closed convex subset of ${\cal L}(X_1\widehat{\otimes}_{\pi} \cdots \widehat{\otimes}_{\pi}X_n; Y)$ as $C$ is a norm-closed convex subset of ${\cal L}(X_1, \dots, X_n; Y)$. So, since $C^\otimes$ is not sequentially $SOT$-closed, there exists, by \cite[Theorem A]{sheldon}, a non-norm attaining linear operator $T: X_1\widehat{\otimes}_{\pi} \cdots \widehat{\otimes}_{\pi}X_n \to Y$. Thus, if $A: X_1 \times \cdots \times X_n \to Y$ is the continuous $n$-linear operator such that $A^\otimes = T$, then $A$ is not norm-attaining.
\end{proof}

A Banach space $X$ is said to have the {\bf separable complementation property} if every separable subspace of $X$ is contained in a separable complemented subspace. It follows from \cite{amirlind} that every weakly compactly generated Banach space, hence every reflexive Banach space, has the separable complementation property.

\begin{lemma} \label{lema2}
    Let $X_1, \dots, X_n$ be Banach spaces with the separable complementation property, let $Y$ be an arbitrary Banach space, and let $C = B_{{\cal L}_{wsc}(X_1, \dots, X_n; Y)}$. Then, $C^\otimes$ is $SOT$-closed if and only if it is sequentially $SOT$-closed.
\end{lemma}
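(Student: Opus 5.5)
The nontrivial direction is that sequential $SOT$-closedness implies $SOT$-closedness. We follow the strategy of \cite[Lemma 3.4]{sheldon}, which reduces a general net to a separable situation where the $SOT$ on bounded sets is metrizable.

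Fix $T \in \overline{C^\otimes}^{SOT}$ and let $(A_\alpha^\otimes)_\alpha \subset C^\otimes$ be a net with $A_\alpha^\otimes \to T$ in the $SOT$. The net is bounded by $1$, hence so is $T$, and $T = A^\otimes$ for a unique $A \in \mathcal{L}(X_1,\dots,X_n;Y)$ with $\norma{A}\le 1$. We must show that $A$ is weakly sequentially continuous. Take sequences $x_{j,k} \to x_j$ weakly in $X_j$, for $j=1,\dots,n$.

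\textbf{Step 1: separable reduction.} The closed span of $\{x_{j,k}\}_k \cup \{x_j\}$ is separable. By the separable complementation property it lies in a separable subspace $Z_j \subset X_j$ that is complemented by a bounded projection $P_j\colon X_j \to Z_j$. The space $Z_1 \widehat{\otimes}_\pi \cdots \widehat{\otimes}_\pi Z_n$ is separable; choose a countable set $D \subset Z_1\times\cdots\times Z_n$ whose elementary tensors have dense span.

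\textbf{Step 2: extract a sequence.} Using the net, pick $\alpha_m$ such that $\norma{A_{\alpha_m}(d) - A(d)} < 1/m$ for the first $m$ elements $d$ of $D$. Define
$$B_m(y_1,\dots,y_n) := A_{\alpha_m}(P_1 y_1,\dots,P_n y_n).$$
Each $B_m$ is weakly sequentially continuous, since the $P_j$ are weak-to-weak continuous. Its norm is at most $\prod_j\norma{P_j}$, so $B_m/\prod_j\norma{P_j}$ lies in $C$. Let $B(y_1,\dots,y_n) := A(P_1y_1,\dots,P_ny_n)$. By a density argument as in Remark \ref{remark}, $B_m \to B$ pointwise, since the $B_m$ are uniformly bounded and converge on $D$ (with multilinear estimates extending this to all of $Z_1\times\cdots\times Z_n$). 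Remark \ref{remark} then gives $B_m^\otimes \to B^\otimes$ in the $SOT$. Since $C^\otimes$ is sequentially $SOT$-closed and convex, a suitable scalar multiple of $B$ lies in $C$, so $B$ is weakly sequentially continuous.

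\textbf{Step 3: conclude.} Because $P_j x_{j,k} = x_{j,k}$ and $P_j x_j = x_j$, we get
$$A(x_{1,k},\dots,x_{n,k}) = B(x_{1,k},\dots,x_{n,k}) \to B(x_1,\dots,x_n) = A(x_1,\dots,x_n).$$
Hence $A$ is weakly sequentially continuous and $A\in C$. The converse direction is trivial.

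\textbf{Main obstacle.} The difficulty is the norm of the projections. $C$ is the unit ball, and scaling is legitimate only if $C^\otimes$ being sequentially closed lets us pass to multiples. This holds because $\mathcal{L}_{wsc}$ is a norm-closed subspace, so sequential $SOT$-closedness of its unit ball gives sequential closedness of every ball $rC$: if $D_m^\otimes \to D^\otimes$ with $\norma{D_m}\le r$, then $(D_m/r)^\otimes \to (D/r)^\otimes$. With $r=\prod_j\norma{P_j}$ this yields $B\in\mathcal{L}_{wsc}$. If this scaling step failed, the fallback would be to use $1$-complemented separable subspaces, which are available for reflexive spaces via projectional resolutions of the identity.
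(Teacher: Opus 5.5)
Your proposal is correct and follows essentially the same route as the paper. Both arguments pass to separable complemented subspaces $Z_j$ containing the relevant sequences, extract from the net a sequence converging pointwise on $Z_1\times\cdots\times Z_n$ via a countable dense set, compose with the projections $P_j$ and normalize by $\prod_j\|P_j\|$ to land in $C$, and then invoke sequential $SOT$-closedness. The differences are cosmetic: you argue directly for arbitrary weakly convergent sequences rather than by contradiction, you explicitly put the weak limits in $Z_j$ and note $\|A\|\le 1$, and convexity of $C^\otimes$ plays no role in your scaling step.
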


\begin{proof} 
 The forward implication is trivial. For the converse, suppose that $C^\otimes$ is sequentially $SOT$-closed but not $SOT$-closed. Then, there exists an operator $T \in \overline{C^\otimes}^{SOT} \setminus C^\otimes$. Let $A: X_1 \times \cdots \times X_n \to Y$ be the $n$-linear continuous operator such that $A^\otimes = T$. 
 Since $A \notin C$, it is not weakly sequentially continuous. Consequently, there are weakly convergent sequences $(x_{j,k})_k \subset X_j$, $j=1,\ldots,n$, such that the sequence $(A(x_{1,k}, \dots, x_{n,k}))_k$ is not convergent in $Y$. By passing to a subsequence if necessary, we get that there is $\varepsilon > 0$ such that $\norma{A(x_{1,k}, \dots, x_{n,k})} \geq \varepsilon$ for every $k \in \N$. For each $j = 1, \dots, n$, the set $E_j := \overline{[x_{j,k} : k \in \N]}$ is a separable subspace of $X_j$, and by the separable complementation property of $X_j$, there exists a separable complemented subspace $Z_j$ of $X_j$ containing $E_j$. Thus, the restriction $A|_{Z_1 \times \cdots \times Z_n}$  is not weakly sequentially continuous (as it fails on the generating sequences).

We claim that $(A|_{Z_1 \times \cdots \times Z_n})^\otimes$ belongs to the $SOT$-sequential closure of $B_{{\cal L}_{wsc}(Z_1, \dots, Z_n; Y)}$. For the convenience of the reader, we will check it for $n=2$. Since $Z_1$ and $Z_2$ are separable, we can write $Z_1 = \overline{[z_{1,k}: k \in \N]}$ and $Z_2 = \overline{[z_{2,k}: k \in \N]}$ with $\norma{z_{i,k}} = 1$ for all $i = 1, 2$ and $k \in \N$.
    Because $A^\otimes \in \overline{C^\otimes}^{SOT}$, there exists a net $(B_\lambda)_{\lambda \in \Lambda} \subset C$ such that $$\norma{B_\lambda(x_1,x_2) - A(x_1, x_2)} \longrightarrow 0$$ for all $x_1 \in X_1$ and $x_2 \in X_2$. Restricting each $B_\lambda$ to $Z_1 \times Z_2$, we obtain a net $(B_\lambda|_{Z_1 \times Z_2})_{\lambda \in \Lambda}$ in 
$B_{\mathcal{L}_{wsc}(Z_1, Z_2; Y)}$ that converges pointwise to $A|_{Z_1 \times Z_2}$. We now extract a sequence from this net that still converges pointwise to $A|_{Z_1 \times Z_2}$.
To do so, we construct inductively an increasing sequence $(\lambda_k)_{k \in \mathbb{N}} \subset \Lambda$ as follows:

    $\bullet$ From the convergence $\displaystyle \norma{B_{\lambda}(z_{1,1}, z_{2,1}) - A(z_{1,1}, z_{2,1})} \longrightarrow 0$, there exists $\lambda_1\in \Lambda$ such that $$\norma{B_{\lambda}(z_{1,1}, z_{2,1}) - A(z_{1,1}, z_{2,1})} < 1$$ for every $\lambda \geq \lambda_1$.

    $\bullet$ For the next step, note that for each pair $(i,j)$ with $i,j = 1,2$, we have
$$\lim_{\lambda} \|B_{\lambda}(z_{1,i}, z_{2,j}) - A(z_{1,i}, z_{2,j})\| = 0.$$
Hence, there exists $\mu_{ij} \in \Lambda$ such that
$$\|B_{\lambda}(z_{1,i}, z_{2,j}) - A(z_{1,i}, z_{2,j})\| < \frac{1}{2}
\quad\text{for every } \lambda \geq \mu_{ij}.$$
Since $\Lambda$ is directed, we can choose $\lambda_2 \in \Lambda$ with 
$\lambda_2 > \lambda_1$ and $\lambda_2 \geq \mu_{ij}$ for all $i,j = 1,2$.
Then, for every $\lambda \geq \lambda_2$ and all $i,j = 1,2$, we have
\[
\|B_{\lambda}(z_{1,i}, z_{2,j}) - A(z_{1,i}, z_{2,j})\| < \frac{1}{2}.
\]

 $\bullet$  Now, for $k = 3$, we use the limits
$$\lim_{\lambda} \|B_{\lambda}(z_{1,i}, z_{2,j}) - A(z_{1,i}, z_{2,j})\| = 0 \quad \text{for } i,j = 1,2,3.$$
For each such pair $(i,j)$, choose $\mu_{ij} \in \Lambda$ such that
$$\|B_{\lambda}(z_{1,i}, z_{2,j}) - A(z_{1,i}, z_{2,j})\| < \frac{1}{3}
\quad\text{for every } \lambda \geq \mu_{ij}.$$
Since $\Lambda$ is directed, we can select $\lambda_3 \in \Lambda$ satisfying 
$\lambda_3 > \lambda_2$ and $\lambda_3 \geq \mu_{ij}$ for all $i,j = 1,2,3$.
Consequently,
$$\|B_{\lambda}(z_{1,i}, z_{2,j}) - A(z_{1,i}, z_{2,j})\| < \frac{1}{3}
\quad\text{for every } \lambda \geq \lambda_3 \text{ and all } i,j = 1,2,3.$$
Proceeding inductively, we obtain an increasing sequence $(\lambda_k)_{k \in \mathbb{N}} \subset \Lambda$ such that for each $k \in \mathbb{N}$,
$$\|B_{\lambda_k}(z_{1,i}, z_{2,j}) - A(z_{1,i}, z_{2,j})\| < \frac{1}{k}
\quad\text{for all } i,j = 1,\dots,k.$$

We now show that the sequence $(B_{\lambda_k})_{k \in \mathbb{N}}$ converges to $A|_{Z_1 \times Z_2}$ pointwise on $Z_1 \times Z_2$. 
Let $x_1 \in Z_1$, $x_2 \in Z_2$ and $\varepsilon > 0$ be given. Since $Z_1 = \overline{[z_{1,k} : k \in \N]}$, there exists $u_1 = \displaystyle\sum_{i=1}^{m_1} a_{1,i} z_{1,i}$ such that $$ \|x_1 - u_1\| < \frac{\varepsilon}{6(1 + \|A\|)(1 + \|x_2\|)}. $$
Now, as $Z_2 = \overline{[z_{2,k} : k \in \N]}$, there is $\displaystyle u_2 = \sum_{j=1}^{m_2} a_{2,j} z_{2,j} $ such that
$$ \|x_2 - u_2\| < \frac{\varepsilon}{6(1 + \|A\|)(1 + \|u_1\|)}. $$
Setting $M := \max\{ |a_{1,i}| |a_{2,j}| : 1 \leq i \leq m_1, 1 \leq j \leq m_2\}$, we choose $k_0 \in \mathbb{N}$ such that $k_0 \geq m_1$, $k_0 \geq m_2$ and 
$ \displaystyle  \frac{m_1 m_2 M}{k_0} < \frac{\varepsilon}{3}. $ Then, for every $k \geq k_0$, we have $k \geq \max\{m_1, m_2\}$, and hence
$$ \|(B_{\lambda_k} - A)(z_{1,i}, z_{2,j})\| < \frac{1}{k} \leq \frac{1}{k_0} \quad \text{for all } 1 \leq i \leq m_1 \,\text{ and }\, 1 \leq j \leq m_2. $$
Now we estimate:
\begin{align*}
    \|B_{\lambda_k}(x_1, x_2) - A(x_1, x_2)\|
&\leq \|B_{\lambda_k}(x_1, x_2) - B_{\lambda_k}(u_1, u_2)\| \\
&\quad + \|(B_{\lambda_k} - A)(u_1, u_2)\| \\
&\quad+ \|A(u_1, u_2) - A(x_1, x_2)\|.
\end{align*}
For the first term, using bilinearity and $\|B_{\lambda_k}\| \leq 1$,
\begin{align*}
    \|B_{\lambda_k}(x_1, x_2) - B_{\lambda_k}(u_1, u_2)\|
&= \|B_{\lambda_k}(x_1 - u_1, x_2) + B_{\lambda_k}(u_1, x_2 - u_2)\| \\
&\leq \|x_1 - u_1\| \|x_2\| + \|u_1\| \|x_2 - u_2\| \\
&< \frac{\varepsilon}{6(1 + \|A\|)(1 + \|x_2\|)} \cdot \|x_2\| 
   + \|u_1\| \cdot \frac{\varepsilon}{6(1 + \|A\|)(1 + \|u_1\|)} \\
&< \frac{\varepsilon}{6(1 + \|A\|)} + \frac{\varepsilon}{6(1 + \|A\|)} 
   = \frac{\varepsilon}{3(1 + \|A\|)} < \frac{\varepsilon}{3}.
\end{align*}
For the third term, using $\|A\|$ and the same estimates used above,
\begin{align*}
    \|A(u_1, u_2) - A(x_1, x_2)\|
&= \|A(x_1 - u_1, x_2) + A(u_1, x_2 - u_2)\| \\
&\leq \|A\| \big( \|x_1 - u_1\| \|x_2\| + \|u_1\| \|x_2 - u_2\| \big) \\
&< \|A\| \cdot \frac{\varepsilon}{3(1 + \|A\|)} 
   < \frac{\varepsilon}{3}.
\end{align*}
For the middle term,
\begin{align*}
    \|(B_{\lambda_k} - A)(u_1, u_2)\| 
&\leq \sum_{i=1}^{m_1} \sum_{j=1}^{m_2} |a_{1,i}| |a_{2,j}| \|(B_{\lambda_k} - A)(z_{1,i}, z_{2,j})\| \\
&< \sum_{i=1}^{m_1} \sum_{j=1}^{m_2} M \cdot \frac{1}{k_0} 
   = \frac{m_1 m_2 M}{k_0} < \frac{\varepsilon}{3}.
\end{align*}
Putting the three estimates together, we obtain
$$ \|B_{\lambda_k}(x_1, x_2) - A(x_1, x_2)\| < \frac{\varepsilon}{3} + \frac{\varepsilon}{3} + \frac{\varepsilon}{3} = \varepsilon $$
for all $k \geq k_0$. As $x_1, x_2$ and $\varepsilon > 0$ were arbitrary, this proves that
$(B_{\lambda_k}|_{Z_1 \times Z_2})_k$ converges pointwise to 
$A|_{Z_1 \times Z_2}$.

Now, for each $j = 1, \dots, n$, let $P_j: X_j \to Z_j$ be a bounded linear projection onto $Z_j$ (which exists because each $Z_j$ is complemented in $X_j$). 
Define the sequence $(T_k)_{k}$ by
$$ 
T_k(x_1, \dots, x_n) := \frac{1}{\norma{P_1} \cdots \norma{P_n}} B_{\lambda_k}(P_1(x_1), \dots, P_n(x_n)), \,\, (x_1, \dots, x_n) \in X_1 \times \cdots \times X_n.
$$
It is easy to see that $(T_k)_k$ belongs to $C = B_{\mathcal{L}_{wsc}(X_1, \dots, X_n; Y)}$ and converges pointwise to $\widetilde{A}:=\dfrac{1}{\norma{P_1} \cdots \norma{P_n}} A \circ (P_1, \dots, P_n)$ on $X_1 \times \cdots \times X_n$. 
Moreover, as  $A|_{Z_1 \times \cdots \times Z_n}$ is not weakly sequentially continuous,  $\widetilde{A} \notin C$,
which contradicts the hypothesis that $C^\otimes$ is sequentially $SOT$-closed. 
Hence, $C^\otimes$ must be $SOT$-closed.
\end{proof}

At this point it is natural to ask whether Lemma \ref{lema2} could be deduced from the linear case by means of the linearization of multilinear operators. However, this approach does not work in general. Indeed, the linearization 
$T^\otimes : X_1 \widehat{\otimes}_\pi \cdots \widehat{\otimes}_\pi X_n \to Y $
of a weakly sequentially continuous multilinear operator $T: X_1 \times \cdots \times X_n \to Y$
need not be completely continuous. Indeed, let $X_1$ and $X_2$ be Banach spaces with the so-called Dunford–Pettis property such that $X_1 \widehat{\otimes}_\pi X_2$ fails to have the Dunford–Pettis property (see, for instance, \cite{gonguti, tala} for examples). Then there exists a weakly compact operator $T: X_1 \widehat{\otimes}_\pi X_2 \to c_0$
which is not completely continuous. Let $A: X_1 \times X_2 \to c_0$ be the bilinear operator such that $A^\otimes = T$. Thus, $A$ is weakly compact (see \cite[Proposition 2.12]{multipoli}), and by \cite[Theorem 3.7]{guti} every weakly compact bilinear operator from $X_1 \times X_2$ into $c_0$ is completely continuous. That is, $A$ maps weak Cauchy sequences $(x_{j,k})_k$ in $X_j$ into norm convergent sequences $(A(x_{1,k},x_{2,k}))_k$ in $c_0$, and hence $A$ is weakly sequentially continuous. Nevertheless, its linearization $A^\otimes = T$ fails to be completely continuous. This shows that weak sequential continuity of multilinear operators does not pass, in general, to their linearizations.
Therefore, Lemma \ref{lema2} cannot be obtained directly from the corresponding linear result via linearization. We need one more ingredient before proving Theorem \ref{maintheo}:

\begin{lemma}\label{lemmafin}
     Let $X_1, \dots, X_n$ be Banach spaces such that all continuous $n$-linear operators $X_1 \times \cdots \times X_n$ into $\K$ are weakly sequentially continuous. Then, for any Banach space $Y$, every compact $n$-linear operator from $X_1 \times \cdots \times X_n$ into $Y$ is weakly sequentially continuous. 
\end{lemma}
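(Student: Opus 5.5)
Let $A\colon X_1\times\cdots\times X_n\to Y$ be compact, and let $(x_{j,k})_k$ converge weakly to $x_j$ in $X_j$ for each $j$. We must show $A(x_{1,k},\dots,x_{n,k})\to A(x_1,\dots,x_n)$ in norm. The approach is a standard subsequence argument combined with compactness and the scalar hypothesis. Weakly convergent sequences are bounded, so $M:=\sup_{j,k}\norma{x_{j,k}}<\infty$. After rescaling, the set $\{A(x_{1,k},\dots,x_{n,k}) : k\in\N\}$ lies in $M^n\,A(B_{X_1}\times\cdots\times B_{X_n})$, which is relatively compact in $Y$.

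It suffices to show that every subsequence of $(A(x_{1,k},\dots,x_{n,k}))_k$ has a further subsequence converging in norm to $A(x_1,\dots,x_n)$. Fix a subsequence. By relative compactness, some further subsequence, indexed by $k_m$, satisfies $A(x_{1,k_m},\dots,x_{n,k_m})\to y$ in norm for some $y\in Y$. We then identify $y$. For each $\varphi\in Y^*$, the $n$-linear form $\varphi\circ A$ is continuous and scalar-valued, so by hypothesis it is weakly sequentially continuous. Since $(x_{j,k_m})_m$ still converges weakly to $x_j$ for every $j$, we get $\varphi(A(x_{1,k_m},\dots,x_{n,k_m}))\to\varphi(A(x_1,\dots,x_n))$. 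Norm convergence also gives $\varphi(A(x_{1,k_m},\dots,x_{n,k_m}))\to\varphi(y)$. Hence $\varphi(y)=\varphi(A(x_1,\dots,x_n))$ for all $\varphi\in Y^*$, and the Hahn--Banach theorem yields $y=A(x_1,\dots,x_n)$.

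Since every subsequence has a further subsequence converging to the same limit $A(x_1,\dots,x_n)$, the whole sequence converges to it in norm. So $A$ is weakly sequentially continuous. The argument has no real obstacle; the two points to check are that weakly convergent sequences are bounded (uniform boundedness principle), so that compactness applies to the images, and that weak convergence is preserved along subsequences, so that the hypothesis on $\varphi\circ A$ can be used along the extracted subsequence.
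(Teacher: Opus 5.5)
Your proof is correct, but it takes a different route from the paper's. The paper linearizes. Every functional on $X_1\widehat{\otimes}_\pi\cdots\widehat{\otimes}_\pi X_n$ is a continuous scalar $n$-linear form, so the hypothesis gives $x_{1,k}\otimes\cdots\otimes x_{n,k}\to x_1\otimes\cdots\otimes x_n$ weakly in the projective tensor product. The paper then quotes the fact that the linearization $A^\otimes$ of a compact $n$-linear operator is compact, hence completely continuous.

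You instead run the usual ``compact implies completely continuous'' argument directly at the multilinear level. Boundedness of weakly convergent sequences puts the images in a relatively compact set. The limit of any convergent subsequence is identified through the forms $\varphi\circ A$, $\varphi\in Y^*$. The subsequence principle then finishes the proof.

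What your version buys:
\begin{itemize}
\item It is self-contained. It needs no result on compactness of $A^\otimes$, which relies on the closed absolutely convex hull of the relatively compact set $A(B_{X_1}\times\cdots\times B_{X_n})$ being compact.
\item It uses the hypothesis only for the forms $\varphi\circ A$. So it proves the sharper statement that every compact $n$-linear operator which is weak-to-weak sequentially continuous is weakly sequentially continuous.
\item It transfers to the lattice setting. Writing $\varphi\circ A=\varphi^+\circ A-\varphi^-\circ A$ as a difference of positive forms, the same argument gives (3)$\Rightarrow$(1) in Corollary \ref{corfin1}. It does so without compactness of the Fremlin linearization, and hence without the atomicity and order continuity of $F$ for that implication.
\end{itemize}

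The paper's route, in turn, fits its overall strategy of transferring linear results through tensor products. It also mirrors Bu's polynomial lemma, which it adapts.
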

\begin{proof}
The proof of this lemma will be an adaptation of \cite[Lemma 2.1]{bu2015} to our setting. We begin noticing that, since all continuous $n$-linear operators $X_1 \times \cdots \times X_n$ into $\K$ are weakly sequentially continuous, the mapping $\otimes_{n}\colon X_1 \times \cdots \times X_n\longrightarrow X_1 \widehat{\otimes}_{\pi} \cdots \widehat{\otimes}_{\pi} X_n$ given by $\otimes_{n}(x_{1},\ldots,x_{n})=x_{1}\otimes\cdots\otimes x_{n}$ satisfies
$$x_{1,k} \otimes \cdots \otimes x_{n,k} = \otimes_n(x_{1,k}, \dots, x_{n,k}) \cvf \otimes_n(x_1, \dots, x_n) = x_1 \otimes \cdots \otimes x_n$$
for all sequences $(x_{j,k})_k \subset X_j$ with $x_{j,k} \cvf x_j$ for every $j = 1, \dots, n$.

Now, let $A\colon X_1 \times \cdots \times X_n\longrightarrow Y$ be a compact $n$-linear operator and let, for each $j=1, \dots, n$, $(x_{j,k})_k$ be a weakly convergent sequence in $X_j$ with weak limit $x_j$. Since its linearization $A^{\otimes}\colon X_1 \widehat{\otimes}_{\pi} \cdots \widehat{\otimes}_{\pi} X_n\longrightarrow Y$ is a compact operator (see \cite[Proposition 2.12]{multipoli}), $A^\otimes$ is a completely continuous linear operator, and so 
$$\lim_k A(x_{1,k}, \dots, x_{n,k}) = \lim_k A^\otimes (x_{1,k} \otimes \cdots \otimes x_{n,k}) = A^\otimes(x_1 \otimes \cdots \otimes x_n) = A(x_1, \dots, x_n). $$
Therefore, $A$ is weakly sequentially continuous.
\end{proof}

Now, we are in the position to prove Theorem \ref{maintheo}.

\medskip

\noindent \textit{ Proof of Theorem \ref{maintheo}.}  (1)$\Rightarrow$(2) Let $A: X_1 \times \cdots \times X_n \to Y$ be a $n$-linear continuous operator. Let $(x_{1,k})_k \subset B_{X_1}, \dots, (x_{n,k})_k \subset B_{X_n}$ such that $\displaystyle \lim_{k \to \infty} \norma{A(x_{1,k}, \dots, x_{n,k})} = \norma{A}$. Since $X_1$ is reflexive, there exists a subsequence of $(x_{1,k})_k$ converging to some $x_1 \in B_{X_1}$ in the weak topology. For convenience, we will denote this subsequence by $(x_{1,k})_k$. Since $X_2, \dots, X_n$ are also reflexive, we can assume, by passing to a subsequence if necessary, that each $(x_{j,k})_k$ converges to some $x_j \in B_{X_j}$ in the weak topology. Now, since $A$ is weakly sequentially continuous, $A(x_{1,k}, \dots, x_{n,k}) \to A(x_1, \dots, x_n)$ in $F$. Hence
$$\norma{A(x_1, \dots, x_n)} =  \lim_{k \to \infty} \norma{A(x_{1,k}, \dots, x_{n,k})}  = \norma{A},$$ 
proving that $A$ is norm-attaining.

(2)$\Rightarrow$(1) Let  $A: X_1 \times \cdots \times X_n \longrightarrow Y$ be an $n$-linear continuous operator with $\norma{A} \leq 1$
and let $\lambda = \lambda(A) \geq 1$ be the constant given in our assumption. We claim that $\dfrac{1}{\lambda}A^\otimes$ belongs to $\overline{C^\otimes}^{SOT}$, where $C := B_{{\cal L}_{wsc}(X_1, \dots, X_n; Y)}$. Indeed, let $N(\frac{1}{\lambda}A^\otimes; F, \varepsilon)$ be a basic neighborhood of $\dfrac{1}{\lambda}A^\otimes$. Since $F = \{ z_1, \dots, z_k \} \subset X_1 \widehat{\otimes}_{\pi} \cdots \widehat{\otimes}_{\pi} X_n$, we have from the density of $X_1 \otimes \cdots \otimes X_n$ in $X_1 \widehat{\otimes}_{\pi} \cdots \widehat{\otimes}_{\pi} X_n$, the existence of $u_l = \displaystyle \sum_{j=1}^{m_l} x_{1,j}^l \otimes \cdots \otimes x_{n,j}^l$ such that
$$ \norma{z_l - u_l}_{\pi} < \frac{\lambda \,\varepsilon}{2(\norma{A^\otimes} + \lambda)} $$
for every $l = 1, \dots, k$. Now, for each $i =1, \dots, n$,
$$ K_i := \conj{x_{i,j}^l}{j=1, \dots, m_l, \, l=1, \dots, k}  $$ 
defines a compact subset of $X_i$, so our assumptions yields the existence 
 of an operator $B \in {\cal F}(X_1, \dots, X_n; Y)$ with $\norma{B} \leq \lambda$ such that 
$$ \norma{A(w_1, \dots, w_n) - B(w_1, \dots, w_n)} < \frac{\lambda \, \varepsilon }{2\displaystyle \max_{1 \leq l \leq k} m_l} $$
for all $w_1 \in K_1, \dots, w_n \in K_n$. Thus
\begin{align*}
    \Big\|\dfrac{1}{\lambda}A^\otimes(z_l) - \dfrac{1}{\lambda}&B^\otimes(z_l)\Big\|  \leq \dfrac{1}{\lambda} \left [\norma{A^\otimes (z_l) - A^\otimes (u_l)} + \norma{A^\otimes(u_l) - B^\otimes(u_l)} + \norma{B^\otimes(u_l) - B^\otimes(z_l)}\right ] \\
    & \leq \dfrac{1}{\lambda} [\norma{A^\otimes} + \norma{B^\otimes}] \norma{z_l - u_l}_{\pi} + \dfrac{1}{\lambda} \sum_{j=1}^{m_l} \norma{A(x_{1,j}^l, \dots, x_{n,j}^l) - B(x_{1,j}^l, \dots, x_{n,j}^l)} \\
    &  < \frac{\varepsilon}{2} + \frac{\varepsilon}{2} = \varepsilon,
\end{align*}
which proves that $\dfrac{1}{\lambda}B^{\otimes} \in N(\frac{1}{\lambda}A^\otimes; F, \varepsilon)$. Since finite type $n$-linear operators are weakly sequentially continuous, we conclude that $\dfrac{1}{\lambda} A^\otimes \in \overline{C^\otimes}^{SOT}$.  By Lemma \ref{lema1} and \ref{lema2}, we obtain that $C^\otimes$ is $SOT$-closed, hence $A$ is weakly sequentially continuous. 


(1)$\Rightarrow$(3) If there is a non-weakly sequentially continuous $n$-linear form $\varphi: X_1 \times \cdots \times X_n \to \K$, then for each $y \in S_Y$, the continuous $n$-linear operator $$B_{y}(x_1, \dots, x_n) = \varphi(x_1, \dots, x_n)y, \, x_1 \in X_1, \dots, x_n \in X_n,$$
would fail to be weakly sequentially continuous, contradicting (1). 

Now, let $A \in \mathcal{L}(X_1, \dots, X_n; Y)$ and let $(x_{j,k})_k \subset B_{X_j}$ for each $j = 1, \dots, n$. Since $X_1, \dots, X_n$ are reflexive, there exists a strictly increasing sequence $(k_i)_i \subset \N$ and there are $x_1 \in B_{X_1}, \dots, x_n \in B_{X_n}$ such that $x_{j, k_i} \cvf x_j$ in $X_j$ for every $j=1, \dots, n$. Since $A$ is weakly sequentially continuous by assumption, we get that $A(x_{1,{k_i}}, \dots, x_{n,{k_i}}) \to A(x_1, \dots, x_n)$, proving that $A$ is compact. 

Finally, implication (3)$\Rightarrow$(1) follows from Lemma \ref{lemmafin}. \qed

\medskip

To provide illustrating examples from Theorem \ref{maintheo}, we will need the following proposition. To this end, we recall that a Banach space $X$ is said to have the {\bf $\lambda$-approximation property} ($\lambda$-AP, in short), with $\lambda \geq 1$, if for every compact subset $K \subset X$ and every $\varepsilon > 0$, there exists $T \in \mathcal{F}(X; X)$ with $\norma{T} \leq \lambda$ such that
$ \norma{Tx - x} < \varepsilon$ for every $x\in K.$ If $X$ has the $\lambda$-AP for some $\lambda$, we say that $X$ has the \textbf{bounded approximation property} (BAP, in short).

\begin{proposition} \label{propap}
    Let $X_1, \dots, X_n$ and $Y$ be Banach spaces. Suppose that one of the following holds:\\
    {\rm (1)} $X_1, \dots, X_n$ have the BAP. \\
    {\rm (2)} $Y$ and at least $n-1$ among $X_1, \dots, X_n$ have the BAP. \\
     Then, for each $A \in \mathcal{L}(X_1, \dots, X_n; Y)$ there exists $\lambda = \lambda(A) \geq 1$ such that  for all compact sets $K_1 \subset X_1, \dots, K_n \subset X_n$ and every $\varepsilon > 0$ there exists $B \in \mathcal{F}(X_1, \dots, X_n; Y)$ with $\norma{B} \leq \lambda$ such that
    $$ \norma{A(x_1, \dots, x_n) - B(x_1, \dots, x_n)} < \varepsilon  $$
    for all $x_1 \in K_1,\dots x_n \in K_n$.
\end{proposition}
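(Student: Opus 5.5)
The idea is to compose $A$ with finite-rank operators coming from the BAP, in the domains and/or in the range, and check that the composite has finite type, has norm controlled by $\|A\|$, and is uniformly close to $A$ on $K_1\times\cdots\times K_n$.

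\textbf{Case (1).} Suppose $X_j$ has the $\lambda_j$-AP for $j=1,\dots,n$, and put $\lambda := \max\{1,\|A\|\}\lambda_1\cdots\lambda_n$. Given compact sets $K_j$ and $\varepsilon>0$, let $R:=\max_j \sup_{x\in K_j}\|x\|+1$. Choose $T_j\in\mathcal{F}(X_j;X_j)$ with $\|T_j\|\le\lambda_j$ and $\|T_jx-x\|<\delta$ on $K_j$, for a small $\delta$ fixed below. Define
$$B(x_1,\dots,x_n):=A(T_1x_1,\dots,T_nx_n).$$
Write $T_j=\sum_i \psi_{j,i}(\cdot)\,e_{j,i}$ with $\psi_{j,i}\in X_j^*$ and $e_{j,i}\in X_j$. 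Expanding by multilinearity gives
$$B(x_1,\dots,x_n)=\sum \psi_{1,i_1}(x_1)\cdots\psi_{n,i_n}(x_n)\,A(e_{1,i_1},\dots,e_{n,i_n}),$$
so $B\in\mathcal{F}(X_1,\dots,X_n;Y)$. Moreover $\|B\|\le\|A\|\lambda_1\cdots\lambda_n\le\lambda$.

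The error is handled by the telescoping identity
$$A(x_1,\dots,x_n)-B(x_1,\dots,x_n)=\sum_{j=1}^n A(T_1x_1,\dots,T_{j-1}x_{j-1},\,x_j-T_jx_j,\,x_{j+1},\dots,x_n).$$
On $K_1\times\cdots\times K_n$ each term is at most $\|A\|\Lambda^{n}R^{n}\delta$, where $\Lambda=\max_j\lambda_j$. Choosing $\delta$ so that $n\|A\|\Lambda^nR^n\delta<\varepsilon$ finishes this case. Note that $\lambda$ depends only on $A$ and the $\lambda_j$, not on the $K_j$ or $\varepsilon$, as the statement requires.

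\textbf{Case (2).} By relabeling, assume $X_1,\dots,X_{n-1}$ and $Y$ have the BAP, with constants $\lambda_1,\dots,\lambda_{n-1},\mu$. Let $T_j$ be as in Case (1) for $j\le n-1$, and let $S\in\mathcal{F}(Y;Y)$ satisfy $\|S\|\le\mu$ and $\|Sy-y\|<\delta$ on a compact set $L\subset Y$ chosen below. Define
$$B(x_1,\dots,x_n):=S\big(A(T_1x_1,\dots,T_{n-1}x_{n-1},x_n)\big).$$
Writing $S=\sum_m \phi_m(\cdot)\,y_m$ and expanding the $T_j$ gives
$$B=\sum \psi_{1,i_1}(x_1)\cdots\psi_{n-1,i_{n-1}}(x_{n-1})\,\phi_m\big(A(e_{1,i_1},\dots,e_{n-1,i_{n-1}},x_n)\big)\,y_m .$$
Each map $x_n\mapsto\phi_m(A(e_{1,i_1},\dots,e_{n-1,i_{n-1}},x_n))$ lies in $X_n^*$, so $B$ has finite type. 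Also $\|B\|\le\mu\|A\|\lambda_1\cdots\lambda_{n-1}$, which again depends only on $A$ and the BAP constants.

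For the error, split
$$A-B=\big(A-A(T_1\cdot,\dots,T_{n-1}\cdot,\cdot)\big)+\big(\mathrm{id}_Y-S\big)\circ A(T_1\cdot,\dots,T_{n-1}\cdot,\cdot).$$
The first term is small on $K_1\times\cdots\times K_n$ by the telescoping estimate of Case (1).

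\textbf{The main obstacle} is the second term. The operator $S$ must be chosen after the $T_j$, and we need $S$ to approximate the identity on the set
$$L:=A(T_1K_1,\dots,T_{n-1}K_{n-1},K_n).$$
This set is compact, because it is the image of the compact set $T_1K_1\times\cdots\times T_{n-1}K_{n-1}\times K_n$ under the continuous map $A$. Hence the order of choices is: first the $T_j$ (depending on $\delta$), then $L$, then $S$ from the $\mu$-AP of $Y$ applied to $L$ with tolerance $\varepsilon/2$.

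It remains to check that the required constants close up, which they do. Choose $\delta$ so that the first term is below $\varepsilon/2$; this $\delta$ depends only on $\|A\|$, $R$, $\Lambda$ and $n$. The second term is then below $\varepsilon/2$ directly. Finally, set $\lambda:=\max\{1,\ \mu\|A\|\lambda_1\cdots\lambda_{n-1}\}$.
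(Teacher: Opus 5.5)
Your proposal is correct and follows essentially the same route as the paper. In case (1) you take $B=A(T_1\cdot,\dots,T_n\cdot)$ with finite-rank operators from the BAP of each $X_j$. In case (2) you additionally post-compose with a finite-rank $S$ approximating ${\rm id}_Y$ on the compact set $A(T_1K_1\times\cdots\times T_{n-1}K_{n-1}\times K_n)$, just as the paper does with $R$ on $A(T_1(K_1)\times K_2)$.

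The differences are only cosmetic. You treat general $n$ through the telescoping identity, whereas the paper writes out only $n=2$. You also control $\|T_jx_j\|$ by the uniform bound $\lambda_j R$, instead of choosing later tolerances in terms of $\sup_{x_1\in K_1}\|T_1(x_1)\|$.
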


\begin{proof} Let  $K_1 \subset X_1, \dots, K_n \subset X_n$ be compact sets, $\varepsilon > 0$ and $A \in \mathcal{L}(X_1, \dots,  X_n; Y)$. If $A = 0$, we can simply take $B = 0$. Thus, we assume $A \neq 0$.

    (1) We prove for $n=2$. Supposing that $X_1$ and $X_2$ have the BAP, there are $\lambda_1 \geq 1$ and $\lambda_2 \geq 1$ such that $X_1$ has the $\lambda_1$-AP and $X_2$ has the $\lambda_2$-AP. We let $\lambda = \lambda(A) := \norma{A} \lambda_1 \lambda_2 + 1$.
    Since $X_1$ has the $\lambda_1$-AP, there exists $T_1 \in \mathcal{F}(X_1; X_1)$ with $\norma{T_1} \leq \lambda_1$ such that
    $$ \norma{T_1(x_1) - x_1} < \dfrac{\varepsilon}{2\norma{A} \displaystyle \max\{1, \sup_{x_2 \in K_2} \norma{x_2}\}} $$ for every $x_1 \in K_1$. On the other hand, as $X_2$ has the $\lambda_2$-AP, there exists $T_2 \in \mathcal{F}(X_2; X_2)$ with $\norma{T_2} \leq \lambda_2$  such that
    $ \displaystyle \norma{T_2(x_2) - x_2} < \dfrac{\varepsilon}{2\norma{A} \displaystyle \max\{ 1, \sup_{x_1 \in K_1} \norma{T_1(x_1)}\}}$ for every $x_2 \in K_2$.
    Letting $B(x_1, x_2) := A(T_1(x_1), T_2(x_2))$, we get that $B$ defines a bilinear operator from $X_1\times X_2$ into $Y$ and $ \norma{B} \leq \norma{A} \norma{T_1} \norma{T_2} \leq \lambda $. To check that $B$ has finite type, write $T_i = \displaystyle \sum_{k=1}^{m_i} \varphi_{i,k} \otimes x_{i,k} $ with $\varphi_{i,k} \in X_i^*$ and $x_{i,k} \in X_i$. Thus,
    \begin{align*}
        B(x_1, x_2) & = A\Big(\sum_{k_{1}=1}^{m_1} \varphi_{1,k_1}(x_1)\, x_{1,k_1}, \sum_{k_2=1}^{m_2} \varphi_{2,k_2} (x_2) \,  x_{2,k_2}\Big) \\
        & = \sum_{k_{1} = 1}^{m_1} \sum_{k_2 = 1}^{m_2} \varphi_{1,k_1}(x_1) \varphi_{2,k_2}(x_2) A(x_{1,k_1}, x_{2, k_2}),
    \end{align*}
    proving that $B$ has finite type. Now, given $x_1 \in K_1$ and $x_2 \in K_2$, we get
    \begin{align*}
        \norma{B(x_1, x_2) - A(x_1&, x_2)}  = \norma{A(T_1(x_1), T_2(x_2)) - A(x_1, x_2)} \\
        & \leq \norma{A(T_1(x_1), T_2(x_2)) - A(T_1(x_1), x_2)} + \norma{A(T_1(x_1), x_2) - A(x_1, x_2)} \\
        & = \norma{A(T_1(x_1), T_2(x_2) - x_2)} + \norma{A(T_1(x_1) - x_1, x_2)} \\
        & \leq \norma{A} \norma{T_1(x_1)} \norma{T_2(x_2) - x_2} + \norma{A} \norma{T_1(x_1) - x_1} \norma{x_2} < \varepsilon.
    \end{align*}

(2) We prove for $n=2$. Supposing that $X_1$ and $Y$ have the BAP, there are $\lambda_{1} \geq 1$ and $\lambda_0 \geq 1$ such that $X_1$ has the $\lambda_1$-AP and $Y$ has the $\lambda_0$-AP. Let $\lambda := \lambda_0\lambda_1\norma{A} + 1$.
Since $K_1$ is compact and $X_1$ has the $\lambda_1$-AP, there exists $T_1 \in \mathcal{F}(X_1;X_1)$ such that $\norma{T_1} \leq \lambda_1$ and
$$ \norma{T_1(x_1) - x_1} < \frac{\varepsilon}{2 \norma{A} \displaystyle \max \{ 1, \sup_{x_2 \in K_2} \norma{x_2}\}}  $$
for every $x_1 \in K_1$.  On the other hand, since $Y$ has the $\lambda_0$-AP and $K: = A(T_1(K_1) \times K_2)$ is a relatively compact subset of $Y$, there exists $R \in \mathcal{F}(Y; Y)$ such that $\norma{R} \leq \lambda_0$ and
$$ \norma{Ry - y} < \frac{\varepsilon}{2} $$
for every $y \in K$. Now define $B(x_1, x_2) := R(A(T_1(x_1), x_2))$ for all $x_1 \in X_1$ and $x_2 \in X_2$. Thus, $B$ is a continuous bilinear operator such that $\norma{B} \leq \norma{R} \norma{A} \norma{T_1} \leq \lambda_0 \lambda_1 \norma{A} \leq \lambda$ and
\begin{align*}
    \norma{B(x_1, x_2) - A(x_1, x_2)} & \leq \norma{R(A(T_1(x_1), x_2)) - A(T_1(x_1), x_2)} + \norma{A(T_1(x_1), x_2) - A(x_1, x_2)} \\
    & <\frac{\varepsilon}{2} + \norma{A} \norma{T_1(x_1) - x_1} \norma{x_2} < \varepsilon
\end{align*}
for all $x_1 \in K_1$ and $x_2 \in K_2$. It remains us to check that $B$ has finite type. Indeed, writing $T_1 = \displaystyle \sum_{k=1}^m \varphi_k \otimes u_k$ and $R = \displaystyle \sum_{j=1}^r y_j^\ast \otimes v_j$ with $\varphi_k \in X_1^*$, $u_k \in X_1$, $y_j^* \in Y^*$, and $v_j \in Y$, we have
\begin{align*}
    B(x_1, x_2) & = \sum_{j=1}^r y_j^*(A(T_1(x_1), x_2))v_j =   \sum_{j=1}^r y_j^*(A(\sum_{k=1}^m \varphi_k (x_1) u_k, x_2))v_j  \\ &
    = \sum_{j=1}^r \sum_{k=1}^m \varphi_k(x_1)y_j^*(A(u_k, x_2)) v_j 
\end{align*}
for all $x_1 \in X_1$ and $x_2 \in X_2$, proving that $B \in \mathcal{F}(X_1, X_2; Y)$.

\end{proof}

\begin{remark}
    The conclusion of the preceding proposition may fail even when the range space has the BAP. Indeed, let $X$ be a reflexive Banach space without the BAP and consider the bilinear form $A: X \times X^* \to \K$ given by $A(x,x^*) = x^*(x)$. Suppose, for the sake of contradiction, that there exists $\lambda = \lambda(A) \geq 1$ such that,  for all compact sets $K_1 \subset X, K_2 \subset X^*$ and every $\varepsilon > 0$ there exists $B \in \mathcal{F}(X, X^*; \K)$ with $\norma{B} \leq \lambda$ such that
    $$ |A(x, x^*) - B(x, x^*)| < \varepsilon  $$
    for all $x \in K_1,x^* \in K_2$. Since $X$ is reflexive, $B$ can be written as 
    $$ B(x, x^*) = x^*(S_B(x)), \quad x \in X, x^* \in X^*, $$
where $S_B: X \to X$ is a finite-rank operator. Moreover, $\norma{S_B} = \norma{B} \leq \lambda$ and 
$$ |x^*(x - S_{B}x)| = |x^*(x) - x^*(S_{B}x)| < \varepsilon $$
    for all $x \in K_1$ and $x^* \in K_2$. Hence
    $ {\rm id_X} \in \overline{\conj{S \in \mathcal{F}(X; X)}{ \norma{S} \leq \lambda}}^{\rm WOT}. $
Since the weak and strong operator closures of a convex subset of $\mathcal{L}(X; X)$ coincide, we obtain 
$$ {\rm id_X} \in \overline{\conj{S \in \mathcal{F}(X; X)}{ \norma{S} \leq \lambda}}^{\rm SOT}, $$
which proves that $X$ has the $\lambda$-AP, a contradiction.
\end{remark}

Using Proposition \ref{propap}, the following examples arise from Theorem \ref{maintheo}. 

\begin{examples} \rm \label{ex1} 
    (1) Let $1< p_1, \dots, p_n, q < \infty$ be given. As $\ell_{p_1}, \dots, \ell_{p_n}$ are reflexive Banach spaces with the BAP, we conclude from Theorem \ref{maintheo} and \cite[Theorem 4.2]{alencarfloret} that every $n$-linear continuous operator $A: \ell_{p_1} \times \cdots \times \ell_{p_n} \to \ell_q$ attains its norm if and only if $\displaystyle \sum_{k=1}^n \frac{1}{p_k} < \frac{1}{q}$.

    \noindent  (2) More examples can be found by applying results from Alencar and Floret \cite{alencarfloret, alencarfloret2}. Let us list a few of them:

    (i) Since every bilinear continuous operator from $\ell_5 \times \ell_5$ into $L_p([0,1]), \, 1 < p \leq 2$, is weakly sequentially continuous (see \cite[5.5]{alencarfloret}), we get that all bilinear continuous operators from $\ell_5 \times \ell_5$ into $L_p([0,1]), \, 1 < p \leq 2$, are norm-attaining. 

    (ii) The Tsirelson’s original space $T^{\ast}$ and its dual $T$ are reflexive spaces with Schauder bases. If $1 < p \leq 2$, then $L_{p}(\mu)$ has type $p$ (see \cite[Theorem 6.2.14]{albiac}). Thus, it follows by  \cite[Theorem 2.3]{alencarfloret2} and \cite[Corollary 5.6]{alencarfloret} that every $n$-linear continuous operator $A\colon (T^{\ast})^{n}\longrightarrow L_{p}(\mu)$ is weakly sequentially continuous whenever $1 < p \leq 2$, hence the Theorem \ref{maintheo} guarantees that $A$ attains its norm.

    (iii) Let $1 < p_1, \dots, p_n < \infty$ such that $\displaystyle \sum_{k=1}^n \frac{1}{p_k} < \frac{1}{2}$. Thus, if $Y$ is a weak Hilbert space (see \cite[Definition 12.1]{pisier}), it follows from a combination of Proposition 5.7 and Corollary 1 of \cite{alencarfloret} that every $n$-linear continuous operator from $\ell_{p_1} \times \cdots \times \ell_{p_n}$ into $Y$ is weakly sequentially continuous, and by Theorem \ref{maintheo} all operators from $\ell_{p_1} \times \cdots \times \ell_{p_n}$ into $Y$ are norm-attaining. For examples of weak Hilbert spaces, we refer \cite[Section 13]{pisier}, the twisted Hilbert space introduced in \cite{fuente}, and \cite{pino}.

\noindent (3) Theorem \ref{maintheo} fails if at least one of the spaces $X_i$ is not reflexive. If $X$ is a non reflexive Banach space, then there exists a non-norm attaining continuous linear functional $\varphi \in X^*$. Thus, the bilinear operator $A: X \times \K^n \to \K$ defined by $A(x,(a_1, \dots, a_n)) = \varphi(x)a_1$ is not norm-attaining. However, every bilinear operator from $X \times \K^n$ in $\K$ is weakly sequentially continuous.
\end{examples}

We now present Banach spaces $X_1, \dots, X_n$ and $Y$ such that $Y$ and at least one of the spaces $X_i$ fail the BAP, while the conclusion of Proposition \ref{propap} still holds. Thus, the converse of Proposition \ref{propap} fails in general. For this, we recall from \cite{maite} that a continuous $n$-linear operator $A: X_1 \times \cdots \times X_n \to Y$ is said to {\bf factor through a Hilbert space} if there exists a Hilbert space $H$, a subset $M \subset H$, a continuous $n$-linear operator $B: X_1 \times \cdots \times X_n \to H$ whose image is contained in $M$, and a Lipschitz map $f: M \to Y$ such that the following diagram commutes:
\[
\begin{tikzcd}
X_1 \times \cdots \times X_n \arrow[r, "A"] \arrow[d, "B"'] & Y \\
M \arrow[ur, "f"'] \arrow[d, hook] & \\
H &,
\end{tikzcd}
\]
where $M \hookrightarrow H$ denotes the inclusion map. In particular, one could replace the class of Hilbert spaces for a different class, such as the class of Banach spaces with the approximation property. In our next result, we will require the factorization through a Banach space with the BAP, but the map $f$ will be required to be a linear operator defined in the whole space.

\begin{proposition} \label{propapfact}
    Let $X_1, \dots, X_n$ and $Y$ be Banach spaces such that every continuous $n$-linear operator $A: X_1 \times \cdots \times X_n \to Y$ factors through a Banach space with the BAP, that is there exist a Banach space $Z = Z(A)$ with the BAP, a continuous $n$-linear operator $B: X_1 \times \cdots \times X_n \to Z$, and a continuous linear operator $S: Z \to Y$ such that $A = S \circ B$. Assume that at least $n-1$ among $X_1, \dots, X_n$ have the BAP.
    Then, for every $A \in \mathcal{L}(X_1, \dots, X_n; Y)$, there exists $\lambda = \lambda(A) \geq 1$ such that for all compact sets $K_1 \subset X_1, \dots, K_n \subset X_n$ and for every $\varepsilon > 0$, there exists $B \in \mathcal{F}(X_1, \dots, X_n; Y)$ with $\norma{B} \leq \lambda$ and
    $$ \norma{A(x_1, \dots, x_n) - B(x_1, \dots, x_n)} < \varepsilon  $$
    for all $x_1 \in K_1,\dots x_n \in K_n$.
\end{proposition}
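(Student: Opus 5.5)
The idea is to reduce to Proposition \ref{propap}(2), applied with target space $Z$ instead of $Y$, and then compose with $S$. Fix $A \in \mathcal{L}(X_1, \dots, X_n; Y)$. By hypothesis, $A = S \circ B_0$ with $Z = Z(A)$ having the BAP, $B_0 \in \mathcal{L}(X_1, \dots, X_n; Z)$ and $S \in \mathcal{L}(Z; Y)$. Since $Z$ and at least $n-1$ of the $X_i$ have the BAP, Proposition \ref{propap}(2) applies to the pair $(X_1,\dots,X_n; Z)$. It gives a constant $\lambda_0 = \lambda(B_0) \ge 1$ with the following property: for all compact $K_i \subset X_i$ and every $\delta>0$ there is $C \in \mathcal{F}(X_1,\dots,X_n;Z)$ with $\norma{C}\le \lambda_0$ and $\norma{B_0(x_1,\dots,x_n) - C(x_1,\dots,x_n)} < \delta$ on $K_1\times\cdots\times K_n$.

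We set $\lambda := \lambda_0 \norma{S} + 1 \ge 1$, which depends only on $A$ once the factorization $(Z, B_0, S)$ is fixed. Given compact sets $K_i$ and $\varepsilon>0$, we choose $\delta = \varepsilon/(\norma{S}+1)$ and take $C$ as above. Then we put $B := S\circ C$.

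\begin{itemize}
\item $B$ has finite type: if $C = \sum_{i=1}^k \varphi_{1,i}\cdots\varphi_{n,i}\, z_i$, then $B = \sum_{i=1}^k \varphi_{1,i}\cdots\varphi_{n,i}\, S(z_i)$.
\item $\norma{B} \le \norma{S}\lambda_0 \le \lambda$.
\item On $K_1\times\cdots\times K_n$, $\norma{A(x) - B(x)} = \norma{S(B_0(x) - C(x))} \le \norma{S}\,\delta < \varepsilon$.
\end{itemize}

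The only point needing care is that the constant $\lambda$ may depend on $A$ through the choice of $Z(A)$, $B_0$ and $S$. This is allowed, since the statement only asks for $\lambda = \lambda(A)$; we just fix one factorization at the start, before the compact sets and $\varepsilon$ are chosen. Proposition \ref{propap}(2) was proved in detail only for $n=2$, but we invoke it for general $n$ as stated. There is no real obstacle beyond this bookkeeping.
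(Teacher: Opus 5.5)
Your proposal is correct and follows the paper's proof: apply Proposition~\ref{propap}(2) to the factor $B_0\colon X_1\times\cdots\times X_n\to Z$, then compose the resulting finite-type approximant with $S$. Your choice $\lambda=\lambda_0\|S\|+1$ (with $\delta=\varepsilon/(\|S\|+1)$) is slightly more careful than the paper's write-up. The paper takes $\lambda$ directly from Proposition~\ref{propap}(2) and never accounts for the factor $\|S\|$ in the bound $\|S\circ\widetilde{B}\|\le\|S\|\,\|\widetilde{B}\|$.
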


\begin{proof}
Let  $K_1 \subset X_1, \dots, K_n \subset X_n$ be compact sets, $\varepsilon > 0$ and $A \in \mathcal{L}(X_1, \dots,  X_n; Y)$.
By the assumption, there exist a Banach space $Z$ with the BAP, a continuous $n$-linear operator $B: X_1 \times \cdots \times X_n \to Z$, and continuous linear operator $S: Z \to Y$ such that $A = S \circ B$.
Without loss of generality, we may assume that $S \neq 0$.
Now, since $Z$ has the BAP, item (2) of Proposition \ref{propap} ensures the existence of  $\lambda = \lambda(A)$ and
$\widetilde{B} \in \mathcal{F}(X_1, \dots, X_n; Z)$ 
with $\norma{\widetilde{B}} \leq \lambda$
such that
 $$ \norma{B(x_1, \dots, x_n) - \widetilde{B}(x_1, \dots, x_n)} < \frac{\varepsilon}{\norma{S}} $$
for all $x_1 \in K_1, \dots, x_n \in K_n$. Thus, $S \circ \widetilde{B} \in \mathcal{F}(X_1, \dots, X_n; Y)$ satisfies
 \begin{align*}
    \norma{A(x_1, \dots, x_n) - S\circ \widetilde{B}(x_1, \dots, x_n)} & = \norma{S \circ B(x_1, \dots, x_n) - S \circ \widetilde{B} (x_1, \dots, x_n)} \\
    & \leq \norma{S} \norma{B(x_1, \dots, x_n) - \widetilde{B}(x_1, \dots, x_n)} < \varepsilon
\end{align*} 
for all $x_1 \in K_1, \dots, x_n \in K_n$.
\end{proof}

\begin{example} \rm Let $1<q<2<p<\infty$. By \cite{skan}, we can find closed subspaces $Y$ of $\ell_{q}$ and $X$ of $\ell_{p}$ without BAP. Since 
$\ell_p$ has type $2$ and $\ell_q$ has cotype $2$ (see \cite[Theorem 6.2.14]{albiac}), we get that $X$ has type $2$ and $Y$ has cotype $2$. By \cite[Corollary 3.5(i)]{bu2026}, $X \widehat{\otimes}_{\pi} \mathbb{K}^n$ has type $2$, and so every continuous linear operator from $X \widehat{\otimes}_{\pi} \K^n$ into $Y$ factors thorough a Hilbert space (see \cite[Theorem 7.4.2]{albiac}). Thus, if $A: X \times \K^n \to Y$ is a continuous bilinear operator, there exists a Hilbert space $H$ such that the following diagram commutes
\[
\begin{tikzcd}
X \widehat{\otimes}_{\pi} \K^n \arrow[r, "A^\otimes"] \arrow[d, "R"'] & Y \\
H \arrow[ur, "S"']  & 
\end{tikzcd}
\]
From the above discussion, we have that $(X,\mathbb{K}^{n};Y)$ satisfy the thesis in Proposition \ref{propapfact}. Moreover, as an application of Pitt's theorem and the ideal property for compact operators, we get that every bounded linear operator from $X$ into $Y$ is compact. From this, it is straightforward to prove that every continuous  bilinear operator $X \times \K^n \to Y$ is weakly sequentially continuous. 
Thus, Theorem \ref{maintheo} ensures that every continuous bilinear operator from $X \times \K^n$ into $Y$ is norm-attaining.
\end{example}

In the remainder of this section, we investigate possible extensions of Theorem~1.1 to the setting of homogeneous polynomials. First, we say that $P \in \mathcal{P}(^n X; Y)$ attains its norm if exists $x \in B_X$ such that $\norma{P(x)} = \norma{P}$. We notice that $P$ can attain its norm on a vector $x \in S_X$ and $T_P$, its associated symmetric $n$-linear operator, fails to attain its norm \cite{pappas}. This observation motivates the search for a polynomial analogue of Theorem \ref{maintheo}.

To do this, we recall that the symmetric $n$-fold algebraic tensor product $\otimes_{n,s} X$ is the vector subspace of $X \otimes \cdots \otimes X$ generated by $\conj{\theta_n(x)}{x \in X}$, where $\theta_n(x) = x \otimes\cdots \otimes x$. The projective symmetric $n$-fold tensor product $\widehat{\otimes}_{n,s, \pi} X$ is the completion of $\otimes_{n,s} X$ with respect to the symmetric projective tensor norm
$$ \norma{u}_{s, \pi} = \inf \conj{\sum_{i=1}^k \norma{x_i}^n}{u = \sum_{i=1}^k \theta_n(x_i)}. $$
Moreover, for every continuous $n$-homogeneous polynomial $P : X \to Y$, there exists a unique bounded linear operator $P^\otimes: \widehat{\otimes}_{n,s, \pi} X \to Y$ such that $P(x) = P^\otimes(\theta_n(x))$ for every $x \in X$. Furthermore, the mapping
$$ P \in \mathcal{P}(^n X; Y) \mapsto P^\otimes \in \mathcal{L}(\widehat{\otimes}_{n,s, \pi} X; Y) $$
is an isometric isomorphism. 

The same argument used in Remark \ref{remark}, using the polynomial version of the Uniform Boundedness Principle \cite[Theorem 2.6]{mujicalivro}, proves the following:

\begin{remark} \label{remarkpol} 
    Let $X$ and $Y$ be Banach spaces, $(P_\alpha)_\alpha$ be a norm bounded net in $\mathcal{P}(^n X; Y)$, and $P \in \mathcal{P}(^n X; Y)$. Then, $P_\alpha^\otimes \to P^\otimes$ in the $SOT$ of ${\cal L}(\widehat{\otimes}_{n,s, \pi} X; Y)$ if and only if $(P_\alpha)_\alpha$ converges pointwise to $P$.
\end{remark}

For a subset $C$ of ${\cal P}(^n X; Y)$, we define $C^\otimes = \conj{P^\otimes}{P \in C}$. The following, as Lemma \ref{lema1}, is an application of \cite[Theorem A]{sheldon}.

\begin{lemma} \label{lema1pol}
Let $X$ and $Y$ be Banach spaces. Assume that exists a norm-closed convex subset $C \subset \mathcal{P}(^nX; Y)$ such that the set $C^\otimes = \conj{P^\otimes}{P \in C}$ is not sequentially $SOT$-closed in ${\cal L}(\widehat{\otimes}_{n,s,\pi} X; Y)$. Then, there exists a non-norm attaining $n$-homogeneous polynomial $P: X \to Y$.
\end{lemma}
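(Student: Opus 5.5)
The plan is to imitate the proof of Lemma \ref{lema1}, now passing to the symmetric tensor product instead of the full projective tensor product. The first step is to check that $C^\otimes$ is a norm-closed convex subset of ${\cal L}(\widehat{\otimes}_{n,s,\pi} X; Y)$. The map $P \mapsto P^\otimes$ from $\mathcal{P}(^nX;Y)$ onto ${\cal L}(\widehat{\otimes}_{n,s,\pi} X; Y)$ is linear and an isometric isomorphism. Hence it carries convex sets to convex sets and norm-closed sets to norm-closed sets: if $P_k^\otimes \to T$ in norm, then $T = P^\otimes$ for some $P$, and $\norma{P_k - P} = \norma{P_k^\otimes - T} \to 0$, so $P \in C$ because $C$ is closed.

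Next, since $C^\otimes$ is norm-closed and convex but not sequentially $SOT$-closed, I would apply \cite[Theorem A]{sheldon} to the Banach spaces $Z := \widehat{\otimes}_{n,s,\pi} X$ and $Y$. This yields a bounded linear operator $T: Z \to Y$ that does not attain its norm. Let $P \in \mathcal{P}(^nX;Y)$ be the unique polynomial with $P^\otimes = T$; then $\norma{P} = \norma{T}$.

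It remains to show that $P$ does not attain its norm. Suppose instead that $x \in B_X$ satisfies $\norma{P(x)} = \norma{P}$. The element $\theta_n(x) = x\otimes\cdots\otimes x$ has $\norma{\theta_n(x)}_{s,\pi} \leq \norma{x}^n \leq 1$, directly from the definition of the symmetric projective norm. Moreover,
$$\norma{T(\theta_n(x))} = \norma{P(x)} = \norma{P} = \norma{T}.$$
So $T$ attains its norm at $\theta_n(x) \in B_Z$, a contradiction. Hence $P$ is a non-norm-attaining $n$-homogeneous polynomial.

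The main point needing care is the applicability of \cite[Theorem A]{sheldon}. That result is stated for arbitrary Banach spaces, so it applies to $Z$, but one should confirm that it requires only norm-closedness and convexity of the set of operators. One should also confirm that "not sequentially $SOT$-closed" is meant in ${\cal L}(Z;Y)$ with $Z$ the domain space, as in Lemma \ref{lema1}. The remaining steps are routine because the linearization is isometric.
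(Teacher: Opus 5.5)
Your proposal is correct and follows the paper's route: the paper obtains this lemma exactly as in Lemma \ref{lema1}. It observes that $C^\otimes$ is a norm-closed convex subset of ${\cal L}(\widehat{\otimes}_{n,s,\pi} X; Y)$, applies \cite[Theorem A]{sheldon} to get a non-norm-attaining $T$, and pulls $T$ back through the isometric linearization. Your explicit check that norm attainment of $P$ at $x$ would force $T$ to attain its norm at $\theta_n(x)$, with $\norma{\theta_n(x)}_{s,\pi}\le \norma{x}^n\le 1$, fills in the step the paper leaves implicit.
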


Recall that an $n$-homogeneous polynomial $P: X \to Y$ is {\bf weakly sequentially continuous} if $P(x_{k})\longrightarrow P(x)$ in $Y$ whenever $x_{k}\xrightarrow{ \,\omega} x$ in $X$. The collection of all weakly sequentially continuous polynomials from $X$ into $Y$ is denoted by ${\cal P}_{wsc}(^n X; Y)$. Our following lemma can be seen as a polynomial version of Lemma \ref{lema2}. At this point, it is important to observe that we cannot use linearization techniques as ${\cal P}_{wsc}(^n X; Y) \neq \mathcal{L}_{wsc}(\widehat{\otimes}_{n,s, \pi }X; Y)$ due to \cite{bu2015}.

\begin{lemma} \label{lema2pol}
    Let $X$ be Banach spaces with the separable complementation property, let $Y$ be an arbitrary Banach space, and let $C = B_{{\cal P}_{wsc}(^nX; Y)}$. Then, $C^\otimes$ is $SOT$-closed if and only if it is sequentially $SOT$-closed.
\end{lemma}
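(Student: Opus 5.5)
The plan is to mimic the proof of Lemma \ref{lema2}, replacing multilinear operators by homogeneous polynomials and Remark \ref{remark} by Remark \ref{remarkpol}. The forward implication is trivial, so I argue by contradiction for the converse. Suppose $C^\otimes$ is sequentially $SOT$-closed but not $SOT$-closed, and pick $T \in \overline{C^\otimes}^{SOT}\setminus C^\otimes$. Write $T = P^\otimes$ with $P \in \mathcal{P}(^nX;Y)$. First I check that $\norma{P}\le 1$. This holds because $SOT$-limits of elements of the unit ball stay in the unit ball, and $P\mapsto P^\otimes$ is an isometry. Hence $P \notin C$ forces $P$ to fail weak sequential continuity. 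So there are a sequence $x_k \cvf x$ and $\varepsilon>0$ with $\norma{P(x_k)-P(x)}\ge\varepsilon$ for all $k$, after passing to a subsequence.

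Next I localize to a separable piece. Let $E=\overline{[x,x_k:k\in\N]}$. By the separable complementation property, $E$ is contained in a separable complemented subspace $Z$, with a bounded projection $Q:X\to Z$. Take a net $(P_\alpha)\subset C$ with $P_\alpha^\otimes\to P^\otimes$ in the $SOT$. By Remark \ref{remarkpol}, this gives $P_\alpha(z)\to P(z)$ for every $z\in X$. Fix a dense sequence $(z_i)$ in $Z$ (a countable dense subset, not merely a spanning set). As in Lemma \ref{lema2}, use directedness inductively to choose increasing indices $\alpha_k$ with $\norma{P_{\alpha_k}(z_i)-P(z_i)}<1/k$ for $i\le k$.

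To pass from the dense set to all of $Z$, I use that the $P_{\alpha_k}$ and $P$ are uniformly bounded by $1$. Bounded $n$-homogeneous polynomials are Lipschitz on bounded sets, uniformly in the family: by the polarization bound and the binomial-type expansion,
\[
\norma{R(u)-R(v)}\le \frac{n^n}{n!}\,n\,\norma{R}\max(\norma{u},\norma{v})^{n-1}\norma{u-v} .
\]
A $3\varepsilon$ argument then shows that $P_{\alpha_k}(z)\to P(z)$ for every $z\in Z$. This step is actually simpler than the multilinear case, since density of a countable set suffices and no expansion over linear combinations is needed.

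Now define
\[
R_k := \norma{Q}^{-n}\, P_{\alpha_k}\circ Q \qquad\text{and}\qquad \widetilde P := \norma{Q}^{-n}\, P\circ Q .
\]
Each $R_k$ is an $n$-homogeneous polynomial of norm at most $1$. It is weakly sequentially continuous because $Q$ is weak-to-weak continuous and $P_{\alpha_k}$ is weakly sequentially continuous. So $R_k\in C$, and $R_k\to\widetilde P$ pointwise on $X$. The bound $\sup_k\norma{R_k}\le 1$ and Remark \ref{remarkpol} then give $R_k^\otimes\to\widetilde P^\otimes$ in the $SOT$. Sequential $SOT$-closedness yields $\widetilde P\in C$. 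However, $Qx_k=x_k\cvf x=Qx$, while $\norma{\widetilde P(x_k)-\widetilde P(x)}\ge \varepsilon\norma{Q}^{-n}$. So $\widetilde P$ is not weakly sequentially continuous, a contradiction.

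The main obstacle I expect is the uniform Lipschitz estimate on bounded sets used in the extraction step. It must hold with constants independent of $k$, and I plan to get it from the symmetric $n$-linear operator $T_R$ via the bound $\norma{T_R}\le \frac{n^n}{n!}\norma{R}$. A second point to check carefully is that the $SOT$-limit of the net stays in the unit ball, so that $P\notin C$ genuinely means $P$ fails weak sequential continuity.
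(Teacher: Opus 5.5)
Your proposal is correct and follows essentially the same route as the paper: take $P^\otimes$ in the $SOT$-closure but not in $C^\otimes$, localize to a separable complemented $Z$ containing the bad weakly convergent sequence, extract from the net a sequence converging pointwise on $Z$, and compose with the projection (normalized by $\norma{Q}^{-n}$) to contradict sequential $SOT$-closedness. For the extraction, which the paper only sketches as an adaptation of Lemma~\ref{lema2}, you use a countable dense subset of $Z$ together with the uniform Lipschitz bound on bounded sets coming from $\norma{T_R}\le \frac{n^n}{n!}\norma{R}$; this is a clean way to fill in that step, and your explicit checks that $\norma{P}\le 1$ and that $x\in Z$ make precise points the paper leaves implicit.
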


\begin{proof}
     The forward implication is trivial. For the converse, suppose that $C^\otimes$ is sequentially $SOT$-closed but not $SOT$-closed in ${\cal L}(\widehat{\otimes}_{n, s, \pi} X; Y)$. Then there exists an operator $T \in \overline{C^\otimes}^{SOT} \setminus C^\otimes$. Let $P \in \mathcal{P}(^n X; Y)$ such that $P^\otimes = T$. Since $P^\otimes \notin C^\otimes$, $P \notin C$, that is $P$ is not weakly sequentially continuous. Consequently, there is a weakly convergent sequence $(x_k)_k \subset X$ such that $(P(x_k))_k$ is not convergent in $Y$. By passing to a subsequence if necessary, we get that $\norma{P(x_k)} \geq \varepsilon$ for every $k \in \N$. Since $E := \overline{[x_k : k \in \N]}$ is a separable subspace of $X$, it follows by the separable complementation property of $X$ that there exists a separable complemented subspace $Z$ of $X$ containing $E$. Thus, the restriction $P|_{Z}$  is not weakly sequentially continuous. The same argument used in the proof of Lemma \ref{lema2} can be adapted to prove that the linearization of $P|_{Z}$  belongs to the $SOT$-sequential closure of $(B_{\mathcal{P}_{wsc}(^n Z; Y))})^\otimes$. Thus, there exists a sequence $(P_k)_k$ in $B_{\mathcal{P}_{wsc}(^n Z; Y))}$ such that
     $$ \lim_{k \to \infty}\norma{P_k(z) - P(z)} = 0 $$
     for every $z \in Z$. If $\pi: X \to Z$ is the bounded linear projection onto $Z$, we can define a sequence $(Q_k)_k$ by
     $$ Q_k(x) := \frac{1}{\norma{\pi}^n} P_{k}(\pi(x)), \quad x \in X. $$
     Thus, $(Q_k)_k$ belongs to $C$ and
     $ \displaystyle \lim_{k \to \infty} \left \|Q_k(x) -  \frac{1}{\norma{\pi}^n}(P\circ\pi)(x)\right \| = 0 $
     for every $x \in X.$ Finally, as $P$ is not weakly sequentially continuous and $\pi$ is a projection, $ \frac{1}{\norma{\pi}^n} P \circ \pi \notin C$, which is a contradiction.
\end{proof}

To present our next result, we recall that an $n$-homogeneous polynomial $P: X \to Y$ is {\bf compact} if $P(B_X)$ is relatively compact subset of $Y$. 
The collection of all such polynomials is denoted by $\mathcal{P}_{\cal K}(^n X; Y)$.

\begin{theorem} \label{maintheopol}
Let $X$ be a reflexive Banach space and let $Y$ be a Banach space.  Assume that for every $P \in \mathcal{P}(^nX; Y)$, there exists $\lambda = \lambda(P) \geq 1$ such that for every compact set $K \subset X$ and  every  $\varepsilon > 0$, there exists $Q \in \mathcal{P}(^nX; Y)$ with $\norma{T_Q} \leq \lambda$ such that $T_Q$ has finite type and
    $ \norma{P(x) - Q(x)} < \varepsilon  $
    for every $x\in K$.
    Then,  the following are equivalent: \\
    {\rm (1)} Every continuous $n$-homogeneous polynomial $P: X \to Y$ is weakly sequentially continuous. \\
    {\rm (2)} Every continuous $n$-homogeneous polynomial $P: X \to Y$  is norm-attaining. \\
   {\rm (3)} Every $P \in \mathcal{P}(^n X; \K)$ is weakly sequentially continuous and every $P \in \mathcal{P}(^n X; Y)$ is compact.
\end{theorem}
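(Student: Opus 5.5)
We mirror the proof of Theorem \ref{maintheo}, replacing multilinear objects by homogeneous polynomials and the projective tensor product by $\widehat{\otimes}_{n,s,\pi}X$.

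\emph{(1)$\Rightarrow$(2).} Take $(x_k)\subset B_X$ with $\norma{P(x_k)}\to\norma{P}$. By reflexivity, pass to a subsequence with $x_k\cvf x\in B_X$. Weak sequential continuity then gives $\norma{P(x)}=\norma{P}$.

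\emph{(1)$\Rightarrow$(3).} If some scalar polynomial $q$ is not weakly sequentially continuous, then $x\mapsto q(x)y$ with $y\in S_Y$ contradicts (1). For compactness, any sequence in $B_X$ has a weakly convergent subsequence, whose images converge in norm by (1). Hence $P(B_X)$ is relatively compact.

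\emph{(3)$\Rightarrow$(1).} This is the polynomial analogue of Lemma \ref{lemmafin}. Since every scalar $n$-homogeneous polynomial is weakly sequentially continuous, $\theta_n\colon X\to\widehat{\otimes}_{n,s,\pi}X$ maps weakly convergent sequences to weakly convergent ones: for $\phi\in(\widehat{\otimes}_{n,s,\pi}X)^*$, the map $\phi\circ\theta_n$ is a scalar polynomial. If $P$ is compact, then $P^\otimes$ is a compact linear operator, since $P^\otimes$ maps the unit ball into the closed absolutely convex hull of $P(B_X)$, which is compact. A compact operator is completely continuous, so $P(x_k)=P^\otimes(\theta_n(x_k))\to P^\otimes(\theta_n(x))=P(x)$.

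\emph{(2)$\Rightarrow$(1).} This is the core step. Let $C=B_{\mathcal P_{wsc}(^nX;Y)}$, which is norm-closed and convex. Since reflexive spaces have the separable complementation property, Lemma \ref{lema2pol} applies. By (2) and Lemma \ref{lema1pol}, $C^\otimes$ is sequentially $SOT$-closed, hence $SOT$-closed.

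Now fix $P$ with $\norma{P}\le1$ and $\lambda=\lambda(P)$. We claim $\frac1\lambda P^\otimes\in\overline{C^\otimes}^{SOT}$. Given $z_1,\dots,z_k\in\widehat{\otimes}_{n,s,\pi}X$ and $\varepsilon>0$, approximate each $z_l$ by $u_l=\sum_{j}a^l_j\theta_n(x^l_j)$, a finite combination. Let $K$ be the finite set of all $x^l_j$, and take $Q$ from the hypothesis with $T_Q$ of finite type, $\norma{T_Q}\le\lambda$, and $\norma{P-Q}$ small on $K$, with the error weighted by the coefficients $|a^l_j|$. Then $\norma{Q}\le\norma{T_Q}\le\lambda$. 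Moreover, $Q$ is weakly sequentially continuous, since it is a finite sum of terms $\varphi_1(x)\cdots\varphi_n(x)\,y$. So $\frac1\lambda Q\in C$.

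The same three-term triangle estimate as in Theorem \ref{maintheo} gives $\norma{\frac1\lambda P^\otimes(z_l)-\frac1\lambda Q^\otimes(z_l)}<\varepsilon$. This uses $\norma{Q^\otimes}=\norma{Q}\le\lambda$, and one has to track the scalar coefficients, since symmetric tensors are linear combinations of $\theta_n(x)$ rather than pure sums. Hence $\frac1\lambda P^\otimes\in C^\otimes$, so $P$ is weakly sequentially continuous. Scaling handles general $P$.

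The main obstacle is that (2)$\Rightarrow$(1) rests on Lemma \ref{lema2pol}, whose proof only sketched the sequential extraction. I would verify that the diagonal argument of Lemma \ref{lema2} works with $P$ evaluated on finite combinations of a dense sequence $(z_k)$ of $Z$. Because $P$ is not additive, I would work through $T_P$ and the polarization formula, so that pointwise convergence on a countable dense set together with the uniform bound $\norma{P_k}\le1$ propagates to all of $Z$. A Lipschitz estimate on bounded sets handles this: $\norma{P(x)-P(u)}\le n\,\norma{T_P}\max(\norma{x},\norma{u})^{n-1}\norma{x-u}$.
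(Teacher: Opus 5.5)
Your proposal is correct and follows the paper's route. You get (1)$\Rightarrow$(2) and (1)$\Rightarrow$(3) from reflexivity as in Theorem \ref{maintheo}; (3)$\Rightarrow$(1) from weak-to-weak continuity of $\theta_n$ and complete continuity of the compact linearization $P^\otimes$, which is the argument of Bu's Lemma 2.1 that the paper cites; and (2)$\Rightarrow$(1) from Lemmas \ref{lema1pol} and \ref{lema2pol} together with the finite-type approximation giving $\frac{1}{\lambda}P^\otimes\in\overline{C^\otimes}^{SOT}$. Your equi-Lipschitz argument, using the uniform bound $\norma{T_{P_k}}\le \frac{n^n}{n!}$ to pass from convergence on a countable dense subset of $Z$ to all of $Z$, is a sound way to fill in the sequential extraction that the paper only sketches in Lemma \ref{lema2pol}.
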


\begin{proof}
The implications (1)$\Rightarrow$(2) and (1)$\Rightarrow$(3) follow from the same argument used in Theorem \ref{maintheo}, and implication (3)$\Rightarrow$(1) follows from \cite[Lemma 2.1]{bu2015}. 
The converse implication (2)$\Rightarrow$(1) is obtained by adapting the proof of (2)$\Rightarrow$(1) in Theorem \ref{maintheo}, combined with Lemmas \ref{lema1pol} and \ref{lema2pol}.
\end{proof}

The following result will be useful for constructing examples illustrating Theorem \ref{maintheopol}. Its proof is an adaptation of the proof of Proposition \ref{propap} for the polynomial set. 

\begin{proposition} \label{propappol}
   Let $X$ and $Y$ be Banach spaces. If $X$ has the BAP, then for every $P\in \mathcal{P}(^nX; Y)$, there exists $\lambda = \lambda(P) \geq 1$ such that for every compact set $K \subset X$ and every $\varepsilon > 0$, there exists $Q \in \mathcal{P}(^n X; Y)$ with $\norma{T_Q} \leq \lambda$ such that $T_Q$ has finite type and $ \norma{P(x) - Q(x)} < \varepsilon  $
    for every $x \in K.$
\end{proposition}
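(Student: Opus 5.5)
The plan is to mimic part (1) of Proposition \ref{propap}, applying a single finite-rank approximation of the identity in every slot of the symmetric multilinear map. Fix $P \in \mathcal{P}(^nX;Y)$ with $P\neq 0$, and let $X$ have the $\lambda_0$-AP. Set
$$\lambda=\lambda(P):=\lambda_0^n\norma{T_P}+1,$$
which is finite because $\norma{T_P}\le \frac{n^n}{n!}\norma{P}$. Given a compact set $K\subset X$ and $\varepsilon>0$, put $M:=\max\{1,\sup_{x\in K}\norma{x}\}$. Choose $T\in\mathcal{F}(X;X)$ with $\norma{T}\le\lambda_0$ and $\norma{Tx-x}<\delta$ for all $x\in K$, where $\delta>0$ will be fixed below. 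Then define
$$Q(x):=P(Tx)=T_P(Tx,\dots,Tx).$$

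First I will check that $Q$ is an $n$-homogeneous polynomial whose associated symmetric map is $T_Q=T_P\circ(T,\dots,T)$. This map is $n$-linear, it is symmetric because $T_P$ is, and its diagonal is $Q$. By uniqueness of the symmetric $n$-linear operator associated with $Q$, this identifies $T_Q$. The norm bound then follows at once:
$$\norma{T_Q}\le\norma{T_P}\norma{T}^n\le\lambda_0^n\norma{T_P}\le\lambda.$$
For finite type, write $T=\sum_{k=1}^m\varphi_k\otimes u_k$ and expand by multilinearity, exactly as in Proposition \ref{propap}:
$$T_Q(x_1,\dots,x_n)=\sum_{k_1,\dots,k_n}\varphi_{k_1}(x_1)\cdots\varphi_{k_n}(x_n)\,T_P(u_{k_1},\dots,u_{k_n}).$$

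For the approximation I will telescope: $P(Tx)-P(x)=\sum_{j=1}^n T_P(\,x,\dots,x,Tx-x,Tx,\dots,Tx\,)$, where exactly $j-1$ copies of $x$ come first, then $Tx-x$, then $n-j$ copies of $Tx$. Each term is bounded by $\norma{T_P}\norma{Tx-x}\norma{x}^{j-1}\norma{Tx}^{n-j}$, so for $x\in K$,
$$\norma{P(x)-Q(x)}\le n\norma{T_P}\,\delta\,(\lambda_0M)^{n-1}.$$
Choosing $\delta<\varepsilon/\bigl(n\norma{T_P}(\lambda_0M)^{n-1}\bigr)$ gives $\norma{P(x)-Q(x)}<\varepsilon$ on $K$.

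Unlike the multilinear case, there is no need to choose approximants successively, since $\norma{Tx}\le\lambda_0\norma{x}$ controls every factor uniformly. The only step needing care, and the likeliest place for an error, is the identification of $T_Q$ with $T_P\circ(T,\dots,T)$, which rests on symmetry and uniqueness. This identification is what makes the bound $\norma{T_Q}\le\lambda$ and the finite-type property hold for the associated symmetric operator itself, not merely for some non-symmetric representative.
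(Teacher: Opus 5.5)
Your proof is correct and takes essentially the paper's approach: compose $T_P$ with one finite-rank $\lambda_0$-approximation of the identity in every slot, identify the result as $T_Q$ by symmetry and uniqueness, and telescope. Your version also does the general $n$ case, where the paper only writes out $n=2$. Your constant $\lambda=\lambda_0^n\norma{T_P}+1$ is genuinely independent of $K$ and $\varepsilon$, whereas the paper's $\norma{T_P}\norma{S}^2+1$ nominally depends on the chosen approximant $S$.
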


\begin{proof}
    Let  $K \subset X$ be compact set, $\varepsilon > 0$ and $0 \neq P \in \mathcal{P}(^nX; Y)$. For convenience, we will check this proposition for $n=2$.

    Suppose first that $X$ has the $\lambda_0$-BAP for some $\lambda_0 \geq 1$. Thus, there exists $S \in \mathcal{F}(X; X)$ with $\norma{S} \leq \lambda_0$ such that $$\norma{S(x) - x} < \dfrac{\varepsilon}{ (\lambda_0 + 1) \norma{T_P}\displaystyle \max\{ 1, \sup_{x \in K} \norma{x} \}}$$
    for every $x \in K$.
    Defining $A : X\times X \to Y$ by 
    $$A(x_1, x_2) = T_P(S(x_1), S(x_2)), \, x_1, x_2 \in X,$$
    we get that $A$ is symmetric - as $T_P$ is symmetric, and $A$ has finite type by the proof of Proposition \ref{propap}.     
    So, if $Q: X \to Y$ is the continuous $2$-homogeneous polynomial such that $T_Q = A$, we get that $T_Q$ has finite type and
    $\norma{T_Q} \leq \norma{T_P} \norma{S}^2$. Letting $\lambda(P):= \norma{T_P} \norma{S}^2 + 1$, we have
    \begin{align*}
        \norma{P(x) - Q(x)} & = \norma{T_P(x, x) - T_P(S(x), S(x))} \\
        & \leq \norma{T_P(x, x) - T_P(S(x), x)} + \norma{T_P(S(x), x) - T(S(x), S(x))} \\
        & = \norma{T_P(x - S(x), x)} + \norma{T_P(S(x), x - S(x))} \\
        & \leq \norma{T_P} \norma{x - S(x)} \norma{x} + \norma{T_P} \norma{x - S(x)} \norma{S} \norma{x} \\
        & = \norma{T_P} \norma{x - S(x)} \sup_{x \in K} \norma{x} (1+ \norma{\lambda_0}) < \varepsilon.
    \end{align*}
\end{proof}

Using Theorem \ref{maintheopol} and Proposition \ref{propappol}, we get the following examples:

\begin{examples}
    \rm (1) Let $n\geq 2$ and $1<p,q<\infty$. It follows from \cite[4.3]{alencarfloret} and Theorem \ref{maintheopol} that every $P\in \mathcal{P}(^n \ell_p; \ell_q)$ attains its norm if and only if  $nq<p$. \\
    \rm (2) By \cite[Theorem 2.3]{alencarfloret2}, we have that every continuous $m$-homogeneous polynomial from $T^\ast$, the Tsirelson`s original space, to $L_{p}(\mu)$ whenever $1<p\leq 2$  is weakly sequentially continuous. Thus, every $P\in \mathcal{P}(^n T^{\ast}; L_{p}(\mu))$ attains its norm. \\
    \rm (3) Let $n \in \N$ be fixed and $p > 2n$. If $Y$ is a weak Hilbert space, we have from \cite{alencarfloret} that every continuous $n$-homogeneous polynomial from $\ell_p$ into $Y$ is weakly sequentially continuous, and by Theorem \ref{maintheopol} we obtain that all $n$-homogeneous polynomials from $\ell_p$ into $Y$ are norm attaining. \\ 
    (4) Consider $P \in \mathcal{P}(^2 \ell_2; \K)$ defined by
    $\displaystyle P(x)=\displaystyle x_{1}^{2}+\sum_{n=2}^{\infty}\dfrac{n}{n+1}x_{n}^{2}$
    for every $x = (x_n)_n \in \ell_2$. Since $P(0) = 0$ and $(e_n)_{n}$ is a weakly null sequence in $\ell_2$ such that $\displaystyle \lim_{n \to \infty} P(e_n) = 1$, 
    $P$ is not weakly sequentially continuous. However, $P$ attains its norm as $\|P\|=1=P(e_{1})$. This does not contradict Theorem \ref{maintheopol}, since there exist non-norm attaining continuous $2$-homogeneous polynomials from $\ell_{2}$ to $\mathbb{K}$ (see, e.g., \cite[p. 7]{arongarciapelteix}).\\
    \rm(5) The reflexivity cannot be dropped in the Theorem \ref{maintheopol}. Indeed, by \cite[p. 1127]{bu2015} every $n$-homogeneous polynomial $P\colon c_0\to \ell_p$, $1\leq p<\infty$, is weakly sequentially continuous. On the other hand, the $2$-homogeneous polynomial $P\colon c_0\to \ell_2$, $\displaystyle P((x_n)_n)=\left(\frac{x_n^2}{n}\right)_n$, does not attain its norm, since $\displaystyle \|P\|=\frac{\pi}{\sqrt{6}}$ but the supremum  is attained at the sequence $(\pm1,\pm1,\dots)$, which does not belong to $c_0$.
\end{examples}

\section{The Banach lattice setting}\label{section3}

 As mentioned in the introduction, in this section we present results analogous to those in Section 2, concerning positive multilinear operators and positive homogeneous polynomials between Banach lattices. In the Banach lattice setting, the following result was proved by the second and third named authors in \cite{luizmiranda}:

\begin{theorem} \label{teolumir}
     Let $E$ be a reflexive Banach lattice and let $F$ be a Dedekind complete Banach lattice. Consider the following conditions: \\
{\rm (1)} Every positive operator $T: E \to F$ is compact. \\
{\rm (2)} Every positive operator $T: E \to F$ attains its norm. \\
{\rm (3)} $B_{\mathcal{K}^+(E; F)}$ is sequentially closed in the absolutely strong operator topology. \\
Then {\rm (1)}$\Rightarrow${\rm(2)}$\Rightarrow${\rm(3)}. In addition, if the order on $E$ is induced by a basis, or if $F$ has an order continuous norm and its order is induced by a basis, then {\rm(3)}$\Rightarrow${\rm(1)}.
\end{theorem}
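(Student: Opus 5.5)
The plan is to prove the three implications separately. The first is a weak-compactness argument, the second a lattice adaptation of \cite[Theorem A]{sheldon}, and the third an approximation of the identity by band projections.

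\textbf{(1)$\Rightarrow$(2).} Let $T\colon E\to F$ be positive. Since $|Tx|\le T|x|$ and $\||x|\|=\|x\|$, we have $\norma{T}=\sup\{\norma{Tx}: x\in B_E\cap E^+\}$. Choose $(x_k)\subset B_E\cap E^+$ with $\norma{Tx_k}\to\norma{T}$. Reflexivity of $E$ gives a subsequence converging weakly to some $x\in B_E$. By (1), $T$ is compact, hence completely continuous, so $Tx_{k}\to Tx$ in norm along that subsequence. Therefore $\norma{Tx}=\norma{T}$, exactly as in the proof of (1)$\Rightarrow$(2) of Theorem \ref{maintheo}.

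\textbf{(2)$\Rightarrow$(3).} I would argue by contraposition, proving a positive analogue of \cite[Theorem A]{sheldon}. Suppose $(T_k)\subset B_{\mathcal{K}^+(E;F)}$ converges in the absolutely strong operator topology to some $T\notin B_{\mathcal{K}^+(E;F)}$. Here Dedekind completeness of $F$ makes the moduli $|T_k-T|$ available, and convergence means $\||T_k-T|x\|\to0$ for $x\ge 0$.
\begin{itemize}
\item Pointwise limits of positive operators are positive, and the norm is lower semicontinuous, so $T\ge 0$ and $\norma{T}\le1$. Hence $T$ must fail to be compact.
\item Next I would build, in the spirit of Sheldon's construction, a positive operator of the form $S=T+\sum_k c_k\,|T-T_k|$ (or a convex combination of the $T_k$ perturbed by $T$), with weights $c_k>0$ chosen so that $\norma{S}$ is approached only along sequences on which compactness of the $T_k$ is lost.
\item Working with the moduli $|T-T_k|$ rather than the differences $T-T_k$ keeps $S$ positive. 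Reflexivity of $E$ is what is used to show that $S$ cannot attain its norm.
\end{itemize}
This step is the main obstacle: the sign-sensitive estimates in \cite{sheldon} must be redone so that positivity is preserved throughout. My first attempt would be to copy Sheldon's estimates verbatim, replacing $\norma{(T-T_k)x}$ by $\norma{|T-T_k|\,|x|}$.

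\textbf{(3)$\Rightarrow$(1), first case (order of $E$ induced by a basis $(e_i)$).} Let $P_n$ be the band projection onto $\mathrm{span}\{e_1,\dots,e_n\}$. The norm of $E$ is order continuous because $E$ is reflexive, so $(I-P_n)x\to0$ for every $x\in E$. For positive $T$, each $TP_n$ is positive, of finite rank, and has norm at most $\norma{T}$. Moreover
$$|TP_n-T|\,x=T(I-P_n)x\to0 \qquad (x\ge0),$$
using that $I-P_n\ge0$ is itself a band projection. Rescaling by $\norma{T}$, the sequence $TP_n/\norma{T}$ lies in $B_{\mathcal{K}^+(E;F)}$ and converges to $T/\norma{T}$ in the absolutely strong operator topology. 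By (3), $T/\norma{T}\in B_{\mathcal{K}^+(E;F)}$, so $T$ is compact.

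\textbf{(3)$\Rightarrow$(1), second case ($F$ order continuous with order induced by a basis).} Let $Q_n$ now be the band projections of $F$ onto the span of its first $n$ basis vectors, and use the operators $Q_nT$ instead. Then
$$|Q_nT-T|\,x=(I-Q_n)Tx\to0 \qquad (x\ge 0)$$
by order continuity of the norm of $F$. The conclusion follows exactly as in the first case.
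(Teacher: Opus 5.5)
The gap is in (2)$\Rightarrow$(3); the other implications are fine. Your proofs of (1)$\Rightarrow$(2) and of both cases of (3)$\Rightarrow$(1) are correct. The latter is the scheme the paper uses: positive finite-rank approximants $TP_n$ or $Q_nT$, built from band projections onto the first $n$ atoms and converging in the $|SOT|$ (compare Corollary \ref{corret} and the proof of Proposition \ref{caracspap}). For (2)$\Rightarrow$(3), however, what you wrote is not an argument. No weights $c_k$ are specified, and nothing is proved about where $\norma{S}$ is or is not attained. The plan to copy the estimates of \cite{sheldon}, replacing $\norma{(T-T_k)x}$ by $\norma{|T-T_k|\,|x|}$, cannot be carried out, because \cite[Theorem A]{sheldon} is not proved by estimates of that kind. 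It rests on James-boundary theory, and for the sequential statement needed here Simons' sup--limsup inequality is the relevant tool. Your claim that reflexivity of $E$ is what keeps $S$ from attaining its norm also points the wrong way: reflexivity plays no role in this implication (the paper notes this after Corollary \ref{corret}). The extra summand $T$ in $S$ likewise has no reason to be there.

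The missing idea is the following boundary argument.
\begin{itemize}
\item Let $(T_k)\subset B_{\mathcal{K}^+(E;F)}$ with $T_k\to T$ in the $|SOT|$. Let $B$ be the set of pairs $(x,y^*)$ with $x\in B_E\cap E^+$ and $y^*\in B_{F^*}\cap(F^*)^+$, and put $f_k(x,y^*)=y^*(|T_k-T|x)$.
\item These functions are bounded by $\norma{T_k}+\norma{T}\le 2$. They satisfy $f_k\to 0$ pointwise on $B$, which is exactly what $|SOT|$ convergence gives.
\item Every $\sigma$-convex combination $S=\sum_k\lambda_k|T_k-T|$ is a positive operator, so $\norma{S}=\sup_{(x,y^*)\in B}y^*(Sx)$.
\item By (2), $S$ attains its norm at some $x_0$, hence at $|x_0|$, since $|Sx_0|\le S|x_0|$. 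The vector $S|x_0|\ge 0$ has a positive norming functional, so $\sum_k\lambda_kf_k$ attains its supremum on $B$.
\item Simons' inequality then gives $\inf\norma{\sum_k\lambda_k|T_k-T|}=0$ over finite convex combinations.
\item Since $|\sum_k\lambda_kT_k-T|\le\sum_k\lambda_k|T_k-T|$, the operator $T$ lies in the norm-closed convex hull of $\{T_k\}$. That hull is contained in $B_{\mathcal{K}^+(E;F)}$.
\end{itemize}
So your instinct to pass to the moduli $|T-T_k|$ is right, but the substance of the step is this boundary argument. In contrapositive form, the non-norm-attaining positive operator is a $\sigma$-convex combination of the $|T_k-T|$, with weights produced by the proof of Simons' inequality, and no extra $T$ term. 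The argument uses only that $B_{\mathcal{K}^+(E;F)}$ is norm-closed and convex, and never uses reflexivity. The paper itself does not reprove this step: it imports it from \cite[Lemma 2.9]{luizmiranda}, a positive analogue of \cite[Theorem A]{sheldon}.
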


The {\bf absolutely strong operator topology} ($|SOT|$, for short) mentioned in the above theorem was introduced in \cite{luizmiranda} to be the topology in $\mathcal{L}^r(E; F)$ - notice that we are assuming that $F$ is Dedekind complete - defined by the following basic neighborhoods
$$ N^r(T; A, \varepsilon) = \conj{S \in \mathcal{L}^r(E; F)}{\norma{|T - S|(x)} < \varepsilon, \, \text{for every } x \in A}, $$
where $A \subset E^+$ is an arbitrary finite set and $\varepsilon > 0$. Thus, a net $(T_\alpha)_\alpha \subset \mathcal{L}^r(E; F)$ converges to $T$ in the $|SOT|$ if and only if $(|T_\alpha - T|(x))_\alpha$ converges to $0$ for every $x \in E^+$.

As a consequence of the proof of implications (2)$\Rightarrow$(3)$\Rightarrow$(1) of Theorem \ref{teolumir} presented in \cite{luizmiranda}, we have the following corollary.

\begin{corollary} \label{corret}
    Let $E$ and $F$ be Banach lattices with $F$ being Dedekind complete such that every positive operator from $E$ into $F$ attains its norm. Assume that every $T \in B_{{\cal L}^+(E; F)}$
    is the $|SOT|$-limit of a sequence $(T_n)_n \subset B_{{\cal F}^+(E; F)}$.
    Then, every positive operator from $E$ into $F$ is compact.
\end{corollary}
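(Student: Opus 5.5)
We argue by contradiction. Assume some positive operator $T\colon E\to F$ is not compact; after normalizing we may take $T\in B_{{\cal L}^+(E;F)}$, or $T$ a small positive multiple of such an operator so that $\norma{T}<1$. The aim is to build from $T$ a positive operator that does not attain its norm. The mechanism should be the one in the proof of (2)$\Rightarrow$(3) of Theorem \ref{teolumir}. There, the positive analogue of \cite[Theorem A]{sheldon} shows that if $B_{\mathcal{K}^+(E;F)}$ is not sequentially $|SOT|$-closed, then some positive operator from $E$ into $F$ fails to attain its norm. So it suffices to show that $B_{\mathcal{K}^+(E;F)}$ fails to be sequentially $|SOT|$-closed, and this is precisely where the extra hypothesis is used.

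By hypothesis there is a sequence $(T_n)_n\subset B_{{\cal F}^+(E;F)}$ with $T_n\to T$ in the $|SOT|$, meaning $\norma{|T_n-T|(x)}\to 0$ for every $x\in E^+$. Each $T_n$ is a positive finite-rank operator, so $T_n$ is compact and therefore $T_n\in B_{\mathcal{K}^+(E;F)}$. The limit $T$ is positive with $\norma{T}\le 1$, but it is not compact, so $T\notin B_{\mathcal{K}^+(E;F)}$. Hence $B_{\mathcal{K}^+(E;F)}$ is not sequentially $|SOT|$-closed. Invoking the argument behind (2)$\Rightarrow$(3), which is the contrapositive construction, we obtain a positive operator $E\to F$ that does not attain its norm. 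This contradicts the assumption, so every positive operator $E\to F$ is compact.

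The main obstacle is to check that the (2)$\Rightarrow$(3) argument of \cite{luizmiranda} uses only the sequential non-closedness witnessed by $(T_n)$, and nowhere needs reflexivity of $E$. In the linear analogue \cite[Theorem A]{sheldon}, no reflexivity is required: one builds a series $\sum_k c_k(T_{n_{k+1}}-T_{n_k})$ plus a perturbation, with norm approached only along a sequence that cannot converge. In the positive setting the same construction has to produce a positive operator. This is why $|SOT|$, rather than $SOT$, appears: moduli $|T_{n}-T|$ control the positive parts. I expect to have to verify that the telescoping construction preserves positivity, since the differences $T_{n_{k+1}}-T_{n_k}$ are only regular. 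I would handle this by working with the hypothesis' approximants in the form used in \cite{luizmiranda}. In that proof, (3)$\Rightarrow$(1) produces exactly such finite-rank positive approximants from the basis assumption. Our hypothesis simply replaces the basis assumption, so the corollary follows from the same chain of arguments.
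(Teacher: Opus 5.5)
Your argument is correct and is essentially the paper's proof, run by contradiction instead of directly: the approximants $T_n \in B_{{\cal F}^+(E;F)} \subset B_{{\cal K}^+(E;F)}$ converge to $T$ in the $|SOT|$. The key fact is that norm attainment of every positive operator forces $B_{{\cal K}^+(E;F)}$ to be sequentially $|SOT|$-closed; the paper cites this as \cite[Lemma 2.9]{luizmiranda}, the engine of (2)$\Rightarrow$(3) in Theorem \ref{teolumir}, and it gives compactness of $T$. That lemma does not need reflexivity of $E$, as the paper remarks right after the corollary, so you need not re-verify positivity in the telescoping construction; you can use the lemma as a black box.
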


\begin{proof}
     Let $T: E \to F$ be a positive operator with $\norma{T} \leq 1$. By the assumption, there exists a sequence $(T_n)_n \subset B_{\mathcal{F}^+(E; F)}$ such that $\displaystyle \lim_{n \to \infty} \norma{|T_n - T|(x)} = 0$ for every $x \in E$. However, since every positive operator from $E$ into $F$ attains its norm, we get from \cite[Lemma 2.9]{luizmiranda} that $B_{{\cal K}^+(E; F)}$
    is sequentially closed in the absolutely strong operator topology. Therefore, $T$ is compact.
\end{proof}

Notice that we do not need to assume that $E$ is reflexive in Corollary \ref{corret} since every positive operator define in AM-space with unit is norm attaining (see \cite[Exercise 2, p. 270]{positiveoperators}). This is a striking difference with the non-positive case since for every non-reflexive Banach space $X$ and every Banach space $Y$, there exists a non-norm attaining operator $T: X \to Y$. Additionally, we remark that if $E$ or $F$ satisfies the property defined below, then the pair $(E, F)$ satisfy the approximation property in the hypothesis of Corollary \ref{corret}.

\begin{definition} \label{apret}
    A Banach lattice $E$ is said to have the {\bf sequentially positive approximation property} (SPAP, in short) if there exists a sequence $(S_n)_n \subset B_{{\cal F}^+(E; E)}$ such that 
    $$\displaystyle \lim_{n \to \infty} \norma{|S_n - {\rm id}_{E}|(x)} = 0$$ for every $x \in E^+$, that is $(S_n)_n$ converges to ${\rm id}_{E}$ in the $|SOT|$.
\end{definition}

Observe that, in the definition above, it is implicitly assumed that
the modulus $|S_n-{\rm id}_E|$ exists for every $n\in\mathbb{N}$. In particular, if $E$ is a Dedekind complete Banach lattice, then this modulus always exists.

It will follow from Proposition \ref{propapfact}, in a multilinear sense, that if $E$ or $F$ satisfies the sequentially positive approximation property, then the pair $(E, F)$ satisfy the hypothesis of Corollary \ref{corret}. In the Banach lattice setting, there are several analogues of the approximation property (AP) that take the order structure into account. Among them are the positive approximation property (PAP) \cite{nielsen}, in which the identity operator is approximated by positive finite-rank operators; the regular approximation property (RAP) \cite{bupams}, where the approximation is considered via the modulus of regular operators; and the lattice approximation property (LAP) \cite{blanco}, which coincides with the PAP in Dedekind complete Banach lattices. It is not difficult to check that the sequentially positive approximation property implies the  RAP and the PAP. 
Next, we present a characterization of the SPAP. In particular, we show that this new property is a separable version of the RAP.

\begin{proposition} \label{caracspap}
    For an infinite dimensional Banach lattice $E$, the following are equivalent: \\
    {\rm (1)} $E$ has the $SPAP$. \\
    {\rm (2)} $E$ is separable, atomic, and has order continuous norm. \\
    {\rm (3)} The order of $E$ is given by a Schauder basis $(x_n)_n$ consisting of mutually disjoint atomic elements. \\
    {\rm (4)} $E$ is separable and has the RAP.
\end{proposition}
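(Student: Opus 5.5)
I will prove the cycle (1)$\Rightarrow$(2)$\Rightarrow$(3)$\Rightarrow$(1), and treat (4) separately: (1)$\Rightarrow$(4) and (4)$\Rightarrow$(2).

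\textbf{(1)$\Rightarrow$(2).} Let $(S_n)_n\subset B_{{\cal F}^+(E;E)}$ converge to ${\rm id}_E$ in the $|SOT|$. Since $\|S_nx-x\|\le\||S_n-{\rm id}_E|(|x|)\|$, we get $S_nx\to x$ for every $x\in E$. Each $S_n$ has separable range, so $E=\overline{\bigcup_n S_n(E)}$ is separable.

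For order continuity, take a disjoint order bounded sequence $0\le x_k\le x$. Then
\[
\|x_k\|\le \||S_n-{\rm id}_E|x\|+\|S_nx_k\|.
\]
The positive finite-rank operator $S_n$ is a finite sum of maps $\varphi\otimes y$, and I would like to assume $\varphi\ge0$. A disjoint order bounded sequence satisfies $\varphi(x_k)\to0$ for every $\varphi\in E^*$ (this holds for $\varphi\ge0$ by $\sum_k\varphi(x_k)\le\varphi(x)$, and hence for all $\varphi\in E^*$). Thus $S_nx_k\to0$ as $k\to\infty$, and $x_k\to0$ in norm; by Meyer-Nieberg's criterion the norm is order continuous.

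For atomicity, suppose $E$ contains a nonzero positive element $u$ with no atom below it. The key idea is to exploit $\||S_n-{\rm id}_E|u\|\to0$ together with the fact that $|S_n-{\rm id}_E|\ge {\rm id}_E-S_n$ on a suitable band. Splitting $u$ into many disjoint pieces $u=u_1+\dots+u_m$, the lattice operations in the Riesz–Kantorovich formula
\[
|S-{\rm id}|(u)=\sup\{|(S-{\rm id})v| : |v|\le u\}
\]
allow choosing signs on the pieces so that $|(S_n-{\rm id}_E)v|\ge \tfrac12 u$ roughly, because a finite-rank $S_n$ cannot follow arbitrarily fine sign patterns. I expect this atomicity step to be the main obstacle. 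My first attempt would use that ${\rm id}$ restricted to the atomless band is disjoint from every finite-rank (indeed every compact-like, diffuse) positive operator: $|S-{\rm id}|\ge {\rm id}-2(S\wedge{\rm id})$, and $S\wedge {\rm id}=0$ on atomless bands when $S$ is finite rank. This would give $|S_n-{\rm id}_E|u\ge u$, a contradiction.

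\textbf{(2)$\Rightarrow$(3).} A separable atomic order continuous Banach lattice has countably many atoms $(e_n)$ whose band is all of $E$. By order continuity, $x=\sum_n e_n^*(x)e_n$ converges in norm, where the $e_n^*$ are the coordinate functionals (the band projections); this gives a disjoint atomic Schauder basis inducing the order.

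\textbf{(3)$\Rightarrow$(1).} Take $S_n=P_n$, the band projection onto $[e_1,\dots,e_n]$. Then $0\le P_n\le{\rm id}_E$, $\|P_n\|\le1$, and $|P_n-{\rm id}_E|={\rm id}_E-P_n$, so $\||P_n-{\rm id}_E|x\|=\|x-P_nx\|\to0$.

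\textbf{(1)$\Rightarrow$(4).} Separability was shown above. The RAP follows directly, since the $S_n$ approximate the identity in the modulus sense uniformly on compacts after a standard equicontinuity argument using $\|S_n\|_r\le$ const.

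\textbf{(4)$\Rightarrow$(2).} I would reuse the same order-continuity and atomicity arguments as in (1)$\Rightarrow$(2), since they only used the existence of regular finite-rank $S$ with $\||S-{\rm id}_E|u\|$ small, which the RAP provides.
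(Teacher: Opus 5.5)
Your arguments for separability, for order continuity, for (2)$\Rightarrow$(3), for (3)$\Rightarrow$(1) and for (1)$\Rightarrow$(4) are fine. For order continuity you use $\|x_k\|\le \||S_n-{\rm id}_E|(x)\|+\|S_nx_k\|$, and $S_nx_k\to 0$ because a finite-rank operator sends the weakly null sequence $(x_k)_k$ to a norm-null one. For (1)$\Rightarrow$(4) you use pointwise convergence plus the bound $\||S_n-{\rm id}_E|\|\le \|S_n\|+1\le 2$; this is a legitimate alternative to the paper's citation for (2)$\Rightarrow$(4).

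The genuine gap is atomicity, which affects both (1)$\Rightarrow$(2) and (4)$\Rightarrow$(2). The sign-pattern paragraph is only a heuristic. Your ``first attempt'' rests entirely on the unproved claim that $S\wedge{\rm id}_E$ vanishes on an atomless band whenever $S\ge 0$ has finite rank. That claim is true once $E$ is known to be Dedekind complete, which your order-continuity step provides. But all the difficulty sits there, and proving it needs at least:
\begin{itemize}
\item $S\wedge{\rm id}_E$ lies in the ideal center;
\item a nonzero positive central operator dominates $\epsilon Q$ for some $\epsilon>0$ and some nonzero band projection $Q$ onto a sub-band of the atomless band;
\item the positive finite-rank operator $QSQ$ cannot dominate $\epsilon\,{\rm id}$ on that band, which is infinite-dimensional. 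For instance, the Aliprantis--Burkinshaw theorem says $0\le R\le K$ with $K$ compact forces $R^3$ compact; a disjoint-sequence argument using order continuity also works.
\end{itemize}
For (4)$\Rightarrow$(2) you must also handle a regular rather than positive $T$. One way is $|T-{\rm id}_E|\ge {\rm id}_E-2(|T|\wedge{\rm id}_E)$ together with $|T|\le\sum_i|\varphi_i|\otimes|y_i|$. None of this appears in the proposal, so as written atomicity is not established.

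The paper avoids all of this with the estimate you already wrote down. For $x\in[-u,u]$ one has $|x-S_nx|\le |S_n-{\rm id}_E|(|x|)\le |S_n-{\rm id}_E|(u)$. Hence $\sup_{x\in[-u,u]}\|x-S_nx\|\le\||S_n-{\rm id}_E|(u)\|\to 0$, while $S_n([-u,u])$ is a bounded subset of a finite-dimensional space. So every order interval is totally bounded, hence norm compact. The known characterization (order intervals are compact exactly when $E$ is atomic with order continuous norm) then gives atomicity and order continuity at once. The same argument, with a single finite-rank $T$ from the RAP applied to $C=\{u\}$, settles (4)$\Rightarrow$(2).
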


\begin{proof}
    (1)$\Rightarrow$(2) Assuming that $E$ has the SPAP, there exists a sequence $(S_n)_n \subset B_{\mathcal{F}^+(E; E)}$ such that
    $\displaystyle  \lim_{n \to \infty} \norma{|S_n - {\rm id}_E|(x)} = 0 $
    for every $x \in E^+$. This implies that 
    $$\norma{S_n(x) - {\rm id}_E(x)} \leq \norma{|S_n - {\rm id}|(|x|)} \to 0$$ for every $x \in E^+$, and consequently
    $ E = \displaystyle \overline{\operatorname{span } \left ( \bigcup_{n \in \N} S_n(E) \right )}. $
    Since each $S_n(E)$ is finite dimensional, a straightforward argument shows that $E$ is separable.
    To prove that $E$ is atomic and has order continuous norm, it suffices to check that every order interval in $E$ is compact (see \cite[Theorem 6.1]{WOrder}). Indeed, we will prove that, for each $u \in E^+$, the order interval $[-u,u]$ is totally bounded. Let  $\varepsilon > 0$ be given. Since
    $$  \norma{S_n(x) - {\rm id}_E(x)} \leq  \norma{|S_n - {\rm id}_E|(|x|)} \leq \norma{|S_n - {\rm id}_E|(u)} $$
    holds for all $x \in [-u,u]$ and $n \in \N$, there exists $k_{\varepsilon} \in \N$ such that 
    $$ \sup_{x \in [-u,u]} \norma{S_{k_\varepsilon}(x) - {\rm id}_E(x)} \leq \norma{|S_{k_{\varepsilon}} - {\rm id}_E|(u)} < \frac{\varepsilon}{2}. $$
    Since $S_{k_{\varepsilon}}([-u,u])$ is a norm bounded subset of the finite-dimensional space $S_{k_{\varepsilon}}([-u,u])$, it is totally bounded, so there are $y_1, \dots, y_m \in E$ such that
    $\displaystyle  S_{k_{\varepsilon}}([-u,u]) \subset \bigcup_{i=1}^m B(y_i; \frac{\varepsilon}{2}). $
    Thus, for each $x \in [-u,u]$, there exists $j \in \{1, \dots, m\}$ such that $\norma{S_{k_{\varepsilon}}(x) - y_j} < \dfrac{\varepsilon}{2}$, and so
    $$ \norma{x - y_j} \leq \norma{S_{k_{\varepsilon}}(x) - {\rm id}_E (x)} + \norma{S_{k_{\varepsilon}}(x) - y_j} < \varepsilon.  $$
    This proves that $[-u,u] \subset \displaystyle \bigcup_{i=1}^m B(y_j; \varepsilon)$. Then $[-u,u]$ is totally bounded, hence compact. As every order interval in $E$ can be seen as a subset of an order interval of the form $[-u,u]$, we conclude this implication.

    (2)$\Rightarrow$(3) Let $E$ be a separable and atomic Banach lattice with an order continuous norm. We may assume that $E \neq \{0\}$. By \cite[Corollary 3.5]{kandicvev}, there are at most countably many normalized positive atoms.
    If there are only finitely many normalized positive atoms, say $e_1, \dots, e_m$, the dense band decomposition property gives $E = \operatorname{ span } \{e_1, \dots, e_m\}$, a contradiction. Thus, there are infinitely many normalized positive atoms, and enumerate them as $(e_j)_j$.  By \cite[Remark 4.13]{kandicmar}, each $I_{e_j}$ is a projection band, and the corresponding band projection $P_{e_j}$ has form $P_{e_j}(x) = e_j^*(x) e_j$ for every $x \in E$, with $0 \leq e_j^* \in E^*$. Moreover, since $E$ is atomic and has order continuous norm, these bands form a dense band decomposition of $E$. For each $N \in \N$, set $R_N := \displaystyle \sum_{j=1}^N P_{e_j}$.  Since $e_i \perp e_{k}$ whenever $i \neq k$, $I_{e_i} \perp I_{e_k}$
    whenever $i \neq k$, so $R_N$ is the band projection onto $\operatorname{span} \{ e_1, \dots, e_N\}$. In particular, $0 \leq R_N \leq {\rm id}_{E}$ and $\norma{R_N - {\rm id}_E} \leq 1$. We now prove that $(e_j)_j$ is a Schauder basis for $E$. Fix $x \in E$ and $\varepsilon > 0$. By \cite[Lemma 4.10]{kandicmar}, there is a nonempty finite set $F \subset \N$ such that $\displaystyle \left \| x - \sum_{j \in F} P_{e_j}(x)  \right \| < \varepsilon$.
    For every $N \geq \max F$, we have
    \begin{align*}
        \norma{x - R_N(x)} & \leq \left \|({\rm id}_E - R_N) \Bigg(x - \sum_{j \in F} P_{e_j}(x)\Bigg)\right\| + \left \|({\rm id}_E - R_N) \Bigg(\sum_{j \in F} P_{e_j}(x))\Bigg)\right\| \\
        & \leq \norma{{\rm id}_E - R_N} \left \|x - \sum_{j \in F} P_{e_j}(x)\right\| + 0 < \varepsilon.
    \end{align*}
Consequently, $ x = \displaystyle \lim_{N \to \infty} R_N(x) = \sum_{j=1}^\infty e_j^*(x) e_j$ in norm. To prove uniqueness, suppose that $x = \displaystyle \sum_{j=1}^\infty a_j e_j$ in norm. For each $i \in \N$, we have
$$ e_i^*(x) e_i = P_{e_i}(x) = P_{e_i} (\sum_{j=1}^\infty a_j e_j) = \sum_{j=1}^\infty a_j P_{e_i}(e_j) = a_i e_i,  $$
which implies that $a_i = e_i^*(x)$ for every $i \in \N$. Hence $(e_j)_j$ is a Schauder basis for $E$. It remains us to see that the order of $E$ is given by this basis, that is
$$ x \in E^+ \iff e_j^*(x) \geq 0 \,\,  \text{ for every } j \in \N. $$
Indeed, if $e_j^*(x) \geq 0$ for every $j \in \N$, then $R_N(x) \geq 0$ for every $N \in \N$. As $R_N(x) \to x$ in norm and $E^+$ is closed, we obtain $x \geq 0$. The converse implication follows from the fact that each $e_j^*$ is a positive linear functional. 


    (3)$\Rightarrow$(1) Let $(e_n)_n$ be Schauder basis of $E$ consisting of mutually disjoint atomic vectors that determines the order of $E$, and let $(e_n^*)_n$ be the corresponding coefficient functionals. For each $N \in \N$, define $R_N(x) := \displaystyle \sum_{j=1}^N e_j^*(x) e_j$ for every $x \in E$. Since the order of $E$ is given by this basis, for every $x \in E^+$, we have $e_j^*(x) \geq 0$ for every $j \in \N$, so
    $$ 0 \leq R_N(x) \leq \sum_{j=1}^\infty e_j^*(x) = x. $$
    Thus $\norma{R_N(x)} \leq \norma{x}$ for every $x \in E^+$, which implies that $\norma{R_N} \leq 1$. Therefore $(R_N)_N \subset B_{\mathcal{F}^+(E;E)}$. Moreover, as ${\rm id}_E - R_N \geq 0$, we have
    $$ \lim_{N \to \infty }\norma{|R_N - {\rm id}_E |(x)} = \lim_{N \to \infty } \norma{x - R_N(x)} = 0, $$
    proving that $E$ has the SPAP.

    The implication (2)$\Rightarrow$(4) follows from \cite[Remark 4.3]{bupams}. It remains us to check that (4)$\Rightarrow$(2). Indeed, assuming that $E$ is a separable Banach lattice with the RAP, we only need to check that $E$ is atomic and has order continuous norm, which is equivalent to prove that all order intervals of the form $[-u,u]$, with $u \in E^+$, are totally bounded in $E$. Given $\varepsilon > 0$ and applying the RAP to $C = \{u\}$, there exists $T \in \mathcal{F}(E; E)$ such that
    $ \norma{|T - {\rm id}_E|(u)} < \dfrac{\varepsilon}{2}. $ Since $T(E)$ is finite dimensional, $T([-u,u])$ is totally bounded. Following the same argument as in the proof of the implication (1)$\Rightarrow$(2) we obtain that $[-u,u]$ is totally bounded, as desired.
\end{proof}

The main objective in this section is to provide a generalization of Corollary \ref{corret} in the setting of multilinear operators:

\begin{theorem} \label{teoret1}
    Let $E_1, \dots, E_n$ and $F$ be Banach lattices with $F$ being Dedekind complete such that every positive $n$-linear operator from $E_1 \times \cdots \times E_n$ into $F$ attains its norm. Assume that for every $A \in B_{{\cal L}^+(E_1, \dots, E_n; F)}$, there exists a sequence operators $(A_k)_k \subset B_{{\cal F}^+(E_1, \dots, E_n; F)}$ such that 
    $$ \lim_{k \to \infty} \norma{|A_k - A|(x_1, \dots, x_n)} = 0$$
    for all $x_1 \in E_1^+, \dots, x_n \in E_n^+$. Then,     every positive $n$-linear operator from $E_1 \times \cdots \times E_n$ into $F$ is weakly sequentially continuous.
\end{theorem}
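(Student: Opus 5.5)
We reduce to the linear result, Corollary \ref{corret}, by linearizing through the Fremlin projective tensor product. Write $E := E_1 \widehat{\otimes}_{|\pi|} \cdots \widehat{\otimes}_{|\pi|} E_n$ for the Fremlin projective tensor product of Banach lattices. Its key feature is that $A \mapsto A^\otimes$ is an isometric lattice isomorphism between $\mathcal{L}^r(E_1,\dots,E_n;F)$ and $\mathcal{L}^r(E;F)$ when $F$ is Dedekind complete. This isomorphism carries positive operators to positive operators, finite type positive operators to finite rank positive operators, and moduli to moduli, so $|A_k-A|^\otimes = |A_k^\otimes - A^\otimes|$.

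\textbf{Step 1 (norm attainment transfers).} We first check that every positive linear operator $E \to F$ attains its norm. A positive $T$ equals $A^\otimes$ for some positive $A$, with $\norma{A} = \norma{T}$. If $A$ attains its norm at $(x_1,\dots,x_n)$, then $T$ attains it at $x_1\otimes\cdots\otimes x_n$, whose norm in $E$ is at most $\prod\norma{x_j}\le 1$.

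\textbf{Step 2 (sequential $|SOT|$ approximation transfers).} Let $T \in B_{\mathcal{L}^+(E;F)}$ with $T = A^\otimes$, and take the sequence $(A_k)_k$ given by the hypothesis. Then $A_k^\otimes \in B_{\mathcal{F}^+(E;F)}$ and $|A_k^\otimes - T| = |A_k - A|^\otimes$. By hypothesis this positive operator tends to $0$ in norm on elementary tensors $x_1\otimes\cdots\otimes x_n$ with $x_j \ge 0$. We must extend this convergence to all of $E^+$. The family $|A_k^\otimes - T|$ is uniformly bounded by $2$, so convergence extends to positive finite sums of positive elementary tensors. We then use the density of such sums in $E^+$: in the Fremlin product, every $0\le z$ is dominated by, or approximated in norm from below by, elements of the form $\sum_i x_{1,i}\otimes\cdots\otimes x_{n,i}$ with $x_{j,i}\ge0$. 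One can take $z_m \to z$ in norm with $z_m$ positive tensor sums by approximating $z$ by algebraic tensors $w_m$ and replacing them with $w_m^+$, which is dominated by a positive elementary sum. An $\varepsilon/3$ argument then finishes this step, using $\norma{|S|(z)-|S|(z_m)} \le \norma{S}_r\norma{z-z_m}$.

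\textbf{Step 3 (conclusion).} Corollary \ref{corret} now gives that every positive $T\colon E\to F$ is compact. So every positive $A$ has $A^\otimes$ compact, hence $A$ is compact; in fact the positive cone $\mathcal{L}^+(E_1,\dots,E_n;F)$ consists of compact operators. Weak sequential continuity does not follow directly from compactness, because positive forms need not be weakly sequentially continuous. Instead we follow the route of Lemma \ref{lemmafin} and show that the map $\otimes_n\colon E_1\times\cdots\times E_n\to E$ sends weakly convergent sequences to weakly convergent sequences. This requires that every regular form be weakly sequentially continuous, which in turn follows from the theorem applied to $F$ replaced by $\R$: every positive form attains its norm because it factors through positive operators. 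Applying the same linear transfer with $F = \R$ gives weak-to-norm continuity.

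\textbf{Main obstacle.} We expect Step 3 to be the hard part. A positive compact linear $T$ maps weakly null sequences in $E$ to norm null sequences only if it is completely continuous, and $\otimes_n$ preserves weak convergence only under additional hypotheses. Our first attempt will be a Dunford--Pettis style argument: compact positive $T = A^\otimes$ is completely continuous on the image of $\otimes_n$ once each positive form $\varphi\circ T$, with $\varphi\in F^{*+}$, is weakly sequentially continuous. We will try to derive that scalar statement from norm attainment of all positive forms, using Corollary \ref{corret} for $F=\R$ combined with a sliding-hump argument on the factors. If that fails, we fall back on the density argument of Step 2 applied to weak limits, as in Lemma \ref{lema2}.
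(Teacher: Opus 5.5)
Your Steps 1 and 2 are sound and match the preliminary work in the paper. The paper likewise shows that $A_k^\otimes \to A^\otimes$ in the $|SOT|$ of $E_1 \widehat{\otimes}_{|\pi|} \cdots \widehat{\otimes}_{|\pi|} E_n$, using density of positive tensor sums and the lattice-isomorphism property, and it notes that norm attainment passes to positive operators on the tensor product.

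\textbf{The gap is in Step 3.} Routing through Corollary \ref{corret} only tells you that $A^\otimes$ is compact. As you note yourself, upgrading this to weak sequential continuity of $A$ (as in Lemma \ref{lemmafin}) requires knowing that every positive $n$-linear form on $E_1\times\cdots\times E_n$ is weakly sequentially continuous. You never prove this:
\begin{itemize}
\item Invoking ``the theorem with $F$ replaced by $\R$'' is circular.
\item The ``same linear transfer with $F=\R$'' is vacuous. For $F=\R$, Corollary \ref{corret} only says that positive functionals on the tensor product are compact, which is trivially true and says nothing about weak sequential continuity of positive forms.
\item The Dunford--Pettis/sliding-hump idea and the ``density applied to weak limits'' fallback are not arguments yet.
\end{itemize}
So as written the proof does not reach the conclusion.

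\textbf{The missing idea.} Use the general principle behind Corollary \ref{corret}, but with a better convex set. \cite[Lemma 2.9]{luizmiranda} gives the following: if every positive operator from a Banach lattice $G$ into $F$ attains its norm, then norm-closed convex sets of such operators are sequentially $|SOT|$-closed. Corollary \ref{corret} is just this lemma applied to $B_{\mathcal{K}^+(G;F)}$.

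The paper instead applies it with $G = E_1 \widehat{\otimes}_{|\pi|} \cdots \widehat{\otimes}_{|\pi|} E_n$ to $C^\otimes$, where $C$ is the set of positive weakly sequentially continuous $n$-linear operators, say of norm at most one.
\begin{itemize}
\item $C$ is norm-closed and convex.
\item $C$ contains every $A_k$, since finite-type operators are weakly sequentially continuous.
\item By your Steps 1 and 2, every positive operator $G\to F$ attains its norm and $A_k^\otimes \to A^\otimes$ in the $|SOT|$.
\end{itemize}
Hence $A^\otimes \in C^\otimes$, so $A$ itself is weakly sequentially continuous. This bypasses compactness and the scalar case entirely, and in fact contains the scalar case as a special instance.
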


To prove Theorem \ref{teoret1}, we need to recall that the positive projective tensor product, or the Fremlin tensor product, of $E_1, \dots, E_n$, denoted by $E_1 \widehat{\otimes}_{|\pi|} \cdots \widehat{\otimes}_{|\pi|} E_n$, is the Banach lattice generated by considering the completion of the vector lattice tensor product $E_1 \overline{\otimes} \cdots \overline{\otimes} \, E_n$ with respect the positive projective tensor norm:
$$ \norma{u}_{|\pi|} = \inf \conj{\sum_{i=1}^k \norma{x_{1,i}}\cdots \norma{ x_{n,i} }}{|u| \leq \sum_{i=1}^k x_{1,i} \otimes \cdots \otimes x_{n,i}, \, x_{j,i} \in E_j^+}.$$
For each $A \in \mathcal{L}^r(E_1, \dots, E_n; F)$, there exists a unique bounded linear operator $A^\otimes$ from  $E_1 \widehat{\otimes}_{|\pi|} \cdots \widehat{\otimes}_{|\pi|} E_n$ into $F$, called the linearization of $A$, such that $A^\otimes (x_1 \otimes \cdots \otimes x_n) = A(x_1, \dots, x_n)$ for all $x_1 \in E_1, \dots, x_n \in E_n$. Moreover, the map
$$ A \in \mathcal{L}^r(E_1, \dots, E_n; F) \mapsto A^\otimes \in \mathcal{L}^r(E_1 \widehat{\otimes}_{|\pi|} \cdots \widehat{\otimes}_{|\pi|} E_n; F) $$
is an isometric isomorphism. If, in addition, $F$ is Dedekind complete,  the correspondence is also a lattice homomorphism (see \cite[Proposition 3.3]{bubuskes}).

\medskip

\noindent{\it Proof of Theorem \ref{teoret1}}
Let $A: E_1 \times \cdots \times E_n \to F$ be a positive $n$-linear operator with $\norma{A} \leq 1$. By the assumption, there exists a sequence of operators $(A_k)_k \subset B_{{\cal F}^+(E_1, \dots, E_n; F)}$ such that 
$$ \lim_{k \to \infty} \norma{|A_k - A|(x_1, \dots, x_n)} = 0 $$
for all $x_1 \in E_1^+, \dots, x_n \in E_n^+$.  We claim that $(A_k^\otimes)_k$ converges to $A^\otimes$ in the $|SOT|$ of ${\cal L}(E_1 \widehat{\otimes}_{|\pi|} \cdots \widehat{\otimes}_{|\pi|} E_n; F)$. To see this, let $z\in (E_{1}\widehat{\otimes}_{|\pi|}\cdots \widehat{\otimes}_{|\pi|} E_{n})^{+}$ and $\varepsilon>0$ be given.  Since the cone generated by $\conj{x_1 \otimes \cdots \otimes x_n}{x_1 \in E_1^+, \dots, x_n \in E_n^+}$ is dense in $E_{1}\widehat{\otimes}_{|\pi|}\cdots \widehat{\otimes}_{|\pi|} E_{n}$ (see \cite[p. 850]{bubuskes}), there exists $w=\displaystyle\sum_{i=1}^{m} x_{1,i}\otimes\cdots\otimes x_{n,i}$ with $x_{j,i} \geq 0$ for all $i$ and $j$ such that $\|z-w\|_{|\pi|}< \dfrac{\varepsilon}{4}.$
Now, using the pointwise convergence of $|A_k - A|$, we can find $k_0 \in \N$ such that
$$\||A_{k}-A|(x_{1,i},\ldots,x_{n,i})\|<\dfrac{\varepsilon}{2m}$$ 
for every $k>k_{0}$ and every $i =1, \dots, m$. Thus, for every $k>n_{0}$,
\begin{align*}
    \norma{|A_k - A|^\otimes(w)} & \leq \sum_{i=1}^m \norma{|A_k - A|^\otimes (x_{1,i}\otimes\cdots\otimes x_{n,i})} < \frac{\varepsilon}{2}.
\end{align*}
Since the correspondence $A\mapsto A^{\otimes}$ is an isometric isomorphism and a lattice homomorphism, we have 
\begin{align*}
    \||A_{k}^{\otimes}-A^{\otimes}|(z)\|&=\||A_{k}-A|^{\otimes}(z)\| \leq \||A_{k}-A|^{\otimes}(|z-w|)\|+\||A_{k}-A|^{\otimes}(w)\|\\
    &\leq \||A_{k}-A|^{\otimes}\|\|z-w\|_{|\pi|} +\dfrac{\varepsilon}{2} < \varepsilon,
\end{align*}
proving that $A_k^\otimes \to A^\otimes$ in the $|SOT|$.

Let $C := {\cal L}_{wsc}^+(E_1, \dots, E_n; F)$ and define $C^\otimes := \conj{S^\otimes}{S \in C}$. We notice that $C^\otimes$ is a norm-closed convex subset of ${\cal L}(E_1 \widehat{\otimes}_{|\pi|} \cdots \widehat{\otimes}_{|\pi|} E_n; F)$. Besides, since we are assuming that every positive $n$-linear operator from $E_1 \times \cdots \times E_n$ into $F$ attains its norm, we get that every positive linear operator from $E_1 \widehat{\otimes}_{|\pi|} \cdots \widehat{\otimes}_{|\pi|} E_n$ into $F$ is norm-attaining. Thus, it follows from \cite[Lemma 2.9]{luizmiranda} that $C^\otimes$ is sequentially $|SOT|$-closed, which proves that $A^\otimes \in C^\otimes$. Therefore, $A \in C$ and we are done. \qed

\medskip

As a consequence, we get the following:

\begin{theorem} \label{teoret2}
    Let $E_1, \dots, E_n$ be reflexive Banach lattices and let $F$ be a Dedekind complete Banach lattice. Assume that for every $A \in B_{{\cal L}^+(E_1, \dots, E_n; F)}$, there exists a sequence operators $(A_k)_k \subset B_{{\cal F}^+(E_{1},\ldots,E_{n}; F)}$ such that 
    $$ \lim_{n \to \infty} \norma{|A_k - A|(x_1, \dots, x_n)} = 0$$
    for all $x_1 \in E_1^{+}, \dots, x_n \in E_n^{+}$.
    Then, the following are equivalent:\\
    {\rm (1)} Every positive $n$-linear operator  $A: E_1 \times \cdots \times E_n \to F$ is weakly sequentially continuous. \\
    {\rm (2)} Every positive $n$-linear operator $A: E_1 \times \cdots \times E_n \to F$ attains its norm.
\end{theorem}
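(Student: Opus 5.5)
The two implications have different sources: (2)$\Rightarrow$(1) is exactly Theorem \ref{teoret1}, while (1)$\Rightarrow$(2) reuses the compactness argument from the proof of (1)$\Rightarrow$(2) in Theorem \ref{maintheo}.

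\emph{(2)$\Rightarrow$(1).} Assume every positive $n$-linear operator from $E_1 \times \cdots \times E_n$ into $F$ attains its norm. Then all hypotheses of Theorem \ref{teoret1} are in force:
\begin{itemize}
\item $F$ is Dedekind complete;
\item universal norm attainment of positive $n$-linear operators is the assumption (2);
\item the $|SOT|$-type sequential approximation of every $A \in B_{{\cal L}^+(E_1, \dots, E_n; F)}$ by elements of $B_{{\cal F}^+(E_1, \dots, E_n; F)}$ is the standing assumption of the present theorem.
\end{itemize}
Theorem \ref{teoret1} then gives that every positive $n$-linear operator is weakly sequentially continuous. Reflexivity is not needed in this direction.

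\emph{(1)$\Rightarrow$(2).} Let $A: E_1 \times \cdots \times E_n \to F$ be positive. Choose $x_{j,k} \in B_{E_j}$ with $\norma{A(x_{1,k}, \dots, x_{n,k})} \to \norma{A}$. Each $E_j$ is reflexive, so its unit ball is weakly sequentially compact. Passing successively to subsequences for $j = 1, \dots, n$, we get $x_{j,k} \cvf x_j \in B_{E_j}$ for every $j$; here $x_j \in B_{E_j}$ because the ball is weakly closed. By (1), $A$ is weakly sequentially continuous, so $A(x_{1,k}, \dots, x_{n,k}) \to A(x_1, \dots, x_n)$ in norm. Hence $\norma{A(x_1, \dots, x_n)} = \norma{A}$.

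There is no need to keep the maximizing vectors in the positive cones, since norm attainment only requires vectors in the unit balls. If one wanted them positive anyway, one could replace $x_{j,k}$ by $|x_{j,k}|$, using $|A(x_1, \dots, x_n)| \le A(|x_1|, \dots, |x_n|)$ for positive $A$; this does not matter here.

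The only subtle point is that the approximation hypothesis is needed solely for (2)$\Rightarrow$(1), and it must be used exactly in the form required by Theorem \ref{teoret1}. Apart from that check, the proof is a direct assembly of earlier results.
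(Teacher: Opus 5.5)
Your proof is correct and follows the paper's own route: the paper presents this theorem as a direct consequence of Theorem \ref{teoret1} for (2)$\Rightarrow$(1). For (1)$\Rightarrow$(2) it relies implicitly on the same weak-sequential-compactness argument from reflexivity that proves (1)$\Rightarrow$(2) in Theorem \ref{maintheo}, which is exactly what you do.
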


In order to provide illustrative examples of Theorems \ref{teoret1} and \ref{teoret2}, we will need the following proposition.

\begin{proposition}  \label{propapret}
    Let $E_1, \dots, E_n$ and $F$ be Banach lattices with $F$  being Dedekind complete. Suppose that one of the following conditions holds: \\
    {\rm (1)} $E_1, \dots, E_n$ have the SPAP.\\
    {\rm (2)} $F$ and at least $n-1$ among $E_1, \dots, E_n$ have the SPAP.\\
    Then, for every $A \in B_{{\cal L}^+(E_1, \dots, E_n; F)}$ there exists a sequence $(A_{k})_{k} \subset B_{\mathcal{F}^+(E_1, \dots, E_n; F)}$ such that
    $$ \lim_{k \to \infty} \norma{|A_{k}-A|(x_1, \dots, x_n)} = 0 $$
    for all $x_1 \in E_1^+,\dots, x_n \in E_n^+$.
\end{proposition}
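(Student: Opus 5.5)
Following Proposition \ref{propap}, I would compose $A$ with the approximating sequences supplied by the SPAP. I write the argument for $n=2$; the general case is identical with more factors. Fix $A\in B_{\mathcal{L}^+(E_1,E_2;F)}$.

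In case (1), let $(S_k)_k\subset B_{\mathcal{F}^+(E_1;E_1)}$ and $(R_k)_k\subset B_{\mathcal{F}^+(E_2;E_2)}$ converge to the identities in the $|SOT|$, as in Definition \ref{apret}. I would set $A_k(x_1,x_2):=A(S_kx_1,R_kx_2)$.

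\begin{itemize}
\item[(a)] $A_k$ is positive, because $S_k,R_k,A$ are positive.
\item[(b)] $\norma{A_k}\le \norma{A}\norma{S_k}\norma{R_k}\le 1$.
\item[(c)] $A_k$ has finite type: writing $S_k=\sum\varphi_i\otimes u_i$ and $R_k=\sum\psi_j\otimes v_j$ as in the proof of Proposition \ref{propap} gives $A_k=\sum_{i,j}\varphi_i\otimes\psi_j\otimes A(u_i,v_j)$. So $A_k\in B_{\mathcal{F}^+(E_1,E_2;F)}$.
\end{itemize}

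The key step is to control the modulus $|A_k-A|$ in the Dedekind complete lattice of regular bilinear maps. I would split
$$A_k-A=A(S_k-{\rm id},R_k)+A({\rm id},R_k-{\rm id}).$$
For positive $A$ and regular $T$, I expect the estimate $|A(T\cdot,V\cdot)|\le A(|T|\cdot,|V|\cdot)$ on positive arguments, for positive $V$. To justify it, take $x_1\ge0$ and use $|Tx_1|\le|T|x_1$ with Riesz–Kantorovich-type arguments. Concretely, for $x_1,x_2\ge 0$ I would show $|A(T\cdot,V\cdot)|(x_1,x_2)\le A(|T|x_1,Vx_2)$. This follows because the positive bilinear map $(x_1,x_2)\mapsto A(|T|x_1,Vx_2)$ dominates both $\pm A(T\cdot,V\cdot)$, since $\pm Tx\le |T|x$ for $x\ge0$. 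This gives
$$|A_k-A|(x_1,x_2)\le A(|S_k-{\rm id}|x_1,R_kx_2)+A(x_1,|R_k-{\rm id}|x_2).$$
Taking norms, the right-hand side is bounded by $\norma{|S_k-{\rm id}|x_1}\norma{x_2}+\norma{x_1}\norma{|R_k-{\rm id}|x_2}$, which tends to $0$. Here the modulus $|S_k-{\rm id}|$ is the one implicitly assumed to exist in Definition \ref{apret}, and the domination is the only lattice fact used.

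In case (2), suppose $E_1$ and $F$ have the SPAP, witnessed by $(S_k)$ and $(Q_k)\subset B_{\mathcal{F}^+(F;F)}$. I would set $A_k(x_1,x_2):=Q_k A(S_kx_1,x_2)$. It is positive, has norm at most $1$, and is of finite type, as in Proposition \ref{propap}(2). I would decompose
$$A_k-A=(Q_k-{\rm id})A(S_k\cdot,\cdot)+A(S_k-{\rm id},\cdot).$$
Then
$$|A_k-A|(x_1,x_2)\le|Q_k-{\rm id}|\bigl(A(x_1,x_2)\bigr)+A(|S_k-{\rm id}|x_1,x_2),$$
using $0\le A(S_kx_1,x_2)\le$ something I must control. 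The problem is that $S_kx_1\le x_1$ need not hold. The main obstacle is therefore the first term: I need $\norma{|Q_k-{\rm id}|A(S_kx_1,x_2)}\to0$ even though the argument moves with $k$.

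I would handle this by bounding
$$|Q_k-{\rm id}|\,A(S_kx_1,x_2)\le |Q_k-{\rm id}|\,A(x_1,x_2)+|Q_k-{\rm id}|\,A(|S_k-{\rm id}|x_1,x_2).$$
The second summand has norm at most $2\norma{|S_k-{\rm id}|x_1}\norma{x_2}$, assuming $\norma{|Q_k-{\rm id}|}\le 2$. That bound needs checking; by Proposition \ref{caracspap}, one may take $Q_k$ to be the basis projections $R_N$, for which $|R_N-{\rm id}|={\rm id}-R_N$ has norm at most $1$. The same substitution lets me take $S_k=R_N$ in (1), which makes all moduli explicit. The first summand tends to $0$ directly from the SPAP of $F$.
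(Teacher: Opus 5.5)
Your proof is correct. The construction ($A_k=A(S_k\cdot,R_k\cdot)$, resp.\ $A_k=Q_kA(S_k\cdot,\cdot)$), the telescoping decomposition of $A_k-A$, and the final pointwise bounds are the same as in the paper. The difference is how you justify the modulus estimate.

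\textbf{Comparison of the key step.} The paper computes $|A_k-A|(x_1,x_2)$ with the Riesz--Kantorovich-type formula for regular bilinear maps \cite[Proposition 2.14]{loane}. It bounds $\sum_{i,j}|(A_k-A)(z_i,w_j)|$ over all decompositions $x_1=\sum z_i$, $x_2=\sum w_j$ and then passes to the supremum. You use only that $|C|=C\vee(-C)$ is a least upper bound in the vector lattice $\mathcal{L}^r(E_1,E_2;F)$, which is available because $F$ is Dedekind complete. Hence any positive bilinear $B$ with $B\ge\pm C$ satisfies $B\ge|C|$, and checking $B\pm C\ge 0$ on positive arguments follows in one line from $\pm Tx\le|T|x$ for $x\ge 0$. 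Your route is more elementary, needs no external formula, and carries over verbatim to $n$ factors. The paper's route is more computational but yields the identical inequality.

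\textbf{Two small points in case (2).}
\begin{enumerate}
\item Your first display, with $|Q_k-{\rm id}|\bigl(A(x_1,x_2)\bigr)$, is not what your majorant argument gives. The correct bound is $|A_k-A|(x_1,x_2)\le|Q_k-{\rm id}|\bigl(A(S_kx_1,x_2)\bigr)+A(|S_k-{\rm id}|x_1,x_2)$. Your subsequent order estimate $A(S_kx_1,x_2)\le A(x_1,x_2)+A(|S_k-{\rm id}|x_1,x_2)$, pushed through the positive operator $|Q_k-{\rm id}|$, then finishes the argument. It plays the role of the paper's linear splitting $A(S_kx_1,x_2)=A(x_1,x_2)+A((S_k-{\rm id})x_1,x_2)$.
\item The detour through basis projections to secure $\norma{|Q_k-{\rm id}_F|}\le 2$ is unnecessary. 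Moreover, Proposition~\ref{caracspap} only covers infinite-dimensional $F$. In $\mathcal{L}^r(F;F)$ one has $|Q_k-{\rm id}_F|\le Q_k+{\rm id}_F$, so $\norma{|Q_k-{\rm id}_F|y}\le\norma{(Q_k+{\rm id}_F)|y|}\le 2\norma{y}$ for any SPAP sequence. This is exactly the bound the paper uses tacitly to obtain its constant $3$.
\end{enumerate}
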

\begin{proof} Let $0 \neq A \in B_{\mathcal{L}^+(E_1, \dots,  E_n; F)}$. 

(1) We assume first that $E_1, \dots, E_n$ have the SPAP. For the convenience of the reader, we prove for $n=2$.
Thus, the assumption ensures the existence of two sequences $(T_{1,k})_k \subset B_{{\cal F}^+(E_1; E_1)}$
and $(T_{2,k})_k \subset B_{{\cal F}^+(E_2; E_2)}$ such that 
$$ \lim_{k\to \infty} \norma{|T_{i,k} - {\rm id}_{E_i}|(x_i)} = 0 $$
for every $x_i \in E_i^+$ and every $i \in \{ 1,2\}$.
For each $k \in \N$, defining 
$$ A_k(x_1, x_2) := A(T_{1,k}(x_1), T_{2,k}(x_2)), \quad x_1 \in E_1, \, x_2 \in E_2, $$
we get that $(A_k)_k$ induces a sequence of positive bilinear operators from $E_1\times E_2$ into $F$. Also, the same argument used in the proof of Proposition \ref{propap} shows that $(A_k)_k \subset B_{{\cal F}^+(E_1, E_2; F)}$.
Now, we check that $|A_k - A|(x_1, x_2) \longrightarrow 0$ for all $x_1 \in E_1^+$ and $x_2 \in E_2^+$. First, notice that, for each $z \in E_1^+$ and each $w \in E_2^+$,
\begin{align*}
    |(A_k-A)(z,w)|&=|A(T_{1,k}(z),T_{2,k}(w))-A(z,w)|\\
   &\leq 
   |A(T_{1,k}(z), T_{2,k}(w) - w)| +|A(T_{1,k}(z) - z, w)|\\
    &\leq A(T_{1,k}(z),|T_{2,k}(w)-w|)+A(|T_{1,k}(z)-z|,w) \\
    &\leq A(T_{1,k}(z),|T_{2,k}-{\rm id}_{E_2}|(w))+A(|T_{1,k}-{\rm id}_{E_1}|(z),w). 
\end{align*}
Now, let $x_1 \in E_1^+$ and $x_2 \in E_2^+$ be fixed. Given $z_1, \dots, z_l \in E_1^+$ and $w_1, \dots, w_m \in E_2^+$ such that $x_1 = \displaystyle \sum_{i=1}^l z_i$ and $x_2 = \displaystyle \sum_{j=1}^m w_j$, using the above inequality, we get that
\begin{align*}
    \sum_{i=1}^l \sum_{j=1}^m \left| (A_k - A)(z_i, w_j) \right| & \leq 
\sum_{i=1}^l \sum_{j=1}^m \Big[ 
A\big(T_{1,k}(z_i), |T_{2,k} - \mathrm{id}_{E_2}|(w_j)\big) 
+ 
A\big(|T_{1,k} - \mathrm{id}_{E_1}|(z_i), w_j\big)
\Big] \\
& = A\big(T_{1,k}(x_1), |T_{2,k} - \mathrm{id}_{E_2}|(x_2)\big) 
+ 
A\big(|T_{1,k} - \mathrm{id}_{E_1}|(x_1), x_2\big).
\end{align*}
Taking the supremum of all such $z_1, \dots, z_l$ and $w_1, \dots, w_m$, we obtain from \cite[Proposition 2.14]{loane} that
$$ |A_k - A|(x_1, x_2) 
\leq 
 A\big(T_{1,k}(x_1), |T_{2,k} - \mathrm{id}_{E_2}|(x_2)\big) 
+ 
A\big(|T_{1,k} - \mathrm{id}_{E_1}|(x_1), x_2\big). $$
Thus,
    \begin{align*}
        \norma{|A_k-A|(x_{1},x_{2})} & \leq \norma{ A\big(T_{1,k}(x_1), |T_{2,k} - \mathrm{id}_{E_2}|(x_2)\big) }
+ 
\norma{A\big(|T_{1,k} - \mathrm{id}_{E_1}|(x_1), x_2\big)} \\
        & \leq \norma{x_1} \norma{|T_{2,k}-{\rm id}_{E_2}|(x_2)} + \norma{|T_{1,k}-{\rm id}_{E_1}|(x_1)} \norma{x_2} \longrightarrow 0.
    \end{align*}

 We also prove item (2) for $n=2$. Assume that $F$ and $E_1$ have the SPAP. Then, there exist sequences $(T_k)_k \subset B_{\mathcal{F}^+(E_1; E_1)}$ and $(R_k)_k \subset B_{\mathcal{F}^+(F; F)}$ such that
$$ \lim_{k \to \infty} \norma{|T_k - {\rm id}_{E_1}|(x_1)} = 0 \quad \text{ and } \quad  \lim_{k \to \infty} \norma{|R_k - {\rm id}_{F}|(y)}= 0$$ for every $x_1 \in E_1^+$ and every $y \in F ^+$. For each $k \in \N$, define $A_k(x_1, x_2) := R_k(A(T_k(x_1), x_2))$ for all $x_1 \in E_1$ and $x_2 \in E_2$. Clearly, each $A_k$ is positive with $\norma{A_k} \leq 1$. Moreover, the same proof used in item (2) of Proposition \ref{propap} implies that $(A_k)_k \subset B_{\mathcal{F}^+(E_1, E_2; F)}$. We now prove that
$$ \lim_{k \to \infty} \norma{|A_k - A|(x_1, x_2)} = 0 $$
for all $x_1 \in E_1^+$ and $x_2 \in E_2^+$. To see this, fix $x_1 \in E_1^+$ and $x_2 \in E_2^+$. Given $z_1, \dots, z_l \in E_1^+$ and $w_1, \dots, w_m \in E_2^+$ such that $x_1 = \displaystyle \sum_{i=1}^l z_i$ and $x_2 = \displaystyle \sum_{j=1}^m w_j$, we have
\begin{align*}
    \sum_{i=1}^l \sum_{j=1}^m |(A_k - &A)(z_i, w_j)|  \leq  \sum_{i=1}^l \sum_{j=1}^m [ |(R_k - {\rm id}_F)(A(T_k(z_i), w_j))| + |A(T_k(z_i) - z_i, w_j)| ] \\
    & \leq |R_k - {\rm id}_F| \left ( \sum_{i=1}^l \sum_{j=1}^m A(T_k(z_i), w_j))  \right ) +  \sum_{i=1}^l \sum_{j=1}^m  A(|T_k - {\rm id}_{E_1}|(z_i), w_{j}) \\
    & = |R_k - {\rm id}_F|(A(T_k(x_1), x_2)) + A(|T_k - {\rm id}_{E_1}|(x_1), x_2).
\end{align*}
Taking the supremum over all such decompositions and using \cite[Proposition 2.14]{loane}, we obtain 
$$ |A_k - A|(x_1, x_2) \leq |R_k - {\rm id}_F|(A(T_k(x_1), x_2)) + A(|T_k - {\rm id}_{E_1}|(x_1), x_2), $$
and therefore
\begin{align*}
    \norma{|A_k - A|(x_1, x_2)} & \leq \norma{|R_k - {\rm id}_F|(A(T_k(x_1), x_2))} + \norma{A(|T_k - {\rm id}_{E_1}|(x_1), x_2)} \\
    & \leq \norma{|R_k - {\rm id}_F|(A(T_k(x_1), x_2) - A(x_1, x_2))} + \norma{|R_k - {\rm id}_F|(A(x_1, x_2))} + \\
    & + \norma{A(|T_k - {\rm id}_{E_1}|(x_1), x_2)} \\
    & \leq 3 \norma{A} \norma{|T_k - {\rm id}_{E_1}|(x_1)} \norma{x_2} +   \norma{|R_k - {\rm id}_F|(A(x_1, x_2))} \to 0,
\end{align*}
and we are done.
\end{proof}

\begin{examples}  \label{exret} \rm
    {\rm (1)}  Let $1 < p_1, \dots, p_n < \infty$ be such that $\displaystyle \sum_{i=1}^n \frac{1}{p_i} < 1$. By \cite[Proposition 4.1]{alencarfloret}, all $n$-linear continuous operators from $\ell_{p_1} \times \cdots \times \ell_{p_n}$ into $\R$ are weakly sequentially continuous. Thus, by Lemma \ref{lemmafin}, for any Banach space $F$, every compact $n$-linear operator from $\ell_{p_1} \times \cdots \times \ell_{p_n}$ into $F$ is weakly sequentially continuous. We can use \cite[Theorem 4.4]{botmir} to obtain Banach lattices $F$ such that all positive $n$-linear operators from $\ell_{p_1} \times \cdots \times \ell_{p_n}$ into $F$ are compact, hence weakly sequentially continuous, hence norm-attaining by Theorem \ref{teoret2}. For instance, we can consider: 
    
    (i) $F = L_q(\nu)$ with $1 \leq q <\left ( \displaystyle \sum_{i=1}^n \frac{1}{p_i} \right )^{-1}$; 
    
    (ii) $F = ({\rm FBL}[L_{q}(\nu)])^* $
with $2 \leq q < \infty$ and $\displaystyle \sum\limits_{i=1}^n \frac{1}{p_i} < \frac{1}{2}$, where ${\rm FBL}[L_{q}(\nu)]$ denotes the Free Banach lattice of $L_q(\nu)$ (see, e.g, \cite{aviles}).
    
  \noindent (2) It follows from \cite[Example 1.1]{botmir} along with Theorem \ref{maintheo} that there are non-norm attaining bilinear operators from $\ell_4 \times \ell_4$ into $L_1([0,1])$. However, as we just saw above, all positive bilinear operators from $\ell_4 \times \ell_4$ into $L_1([0,1])$ are norm-attaining.\\
\noindent (3) Let $E_{1},\ldots,E_{n}$ be AM-spaces with units $e_{1},\ldots,e_{n}$, respectively, and let $F$ be a Dedekind complete Banach lattice. We claim that all $n$-linear positive operators $A\colon E_{1}\times\cdots\times E_{n}\longrightarrow F$ are norm attaining. We illustrate the argument in the case $n=2$. Indeed, consider the isometric isomorphism $\psi:\mathcal{L}^r (E_{1},E_{2};F)\longrightarrow\mathcal{L}^{r}(E_{1};\mathcal{L}^r(E_{2};F))$, given by 
$$ \psi(A)(x_1)(x_2) = A(x_1, x_2), \, A \in \mathcal{L}^r(E_{1},E_{2};F), \, x_1 \in E_1, \, x_2 \in E_2. $$
So, for every $A \in \mathcal{L}^+ (E_{1},E_{2};F) $, $\psi(A)$ is a positive linear operator defined on $E_1$. As $E_1$ is an AM-space with unit $e_1$, we have from \cite[Exercise 2, p. 270]{positiveoperators} that
$$ \norma{\psi(A)(e_1)} = \norma{\psi(A)} = \norma{A}.  $$
Furthermore, as $\psi(A)(e_1)$ is a positive linear operator on $E_2$ that is also an $AM$-space with unit $e_2$, we get again from \cite[Exercise 2, p. 270]{positiveoperators} that
$$ \norma{A(e_1,e_2)} = \norma{\psi(A)(e_1)(e_2)} = \norma{\psi(A)(e_1)} = \norma{A}, $$
proving that all bilinear positive operators $A:E_1 \times E_2 \to F$ are norm-attaining. The general case will follow inductively using the isometric identification between $\mathcal{L}^r(E_1, \dots, E_n; F)$ and $\mathcal{L}^r(E_1, \mathcal{L}^{r}(E_2, \dots, E_n; F))$. \\
\noindent (4) Since infinite dimensional AM-spaces are not reflexive, we can always find non-norm attaining $n$-linear operators defined in the product of AM-spaces. 
\\
\noindent (5)  Now, we shows that, in Proposition \ref{propapret}(2), the assumption that at least $n-1$ of the domain spaces have the SPAP cannot be omitted, even when the range space has the SPAP. 
Let $E=L_2([0,1])$ and consider the continuous positive bilinear form
$$ A\colon E\times E\longrightarrow\mathbb R, \quad
A(f,g)=\int_0^1 f(t)g(t)\,dt. $$
Clearly, $\norma{A} = 1$. 
We claim that
$|A-B|(\mathbf 1,\mathbf 1)\geq 1$  for every $B\in\mathcal F^+(E,E;\mathbb R)$, where $\mathbf 1$ denotes the constant function equal to one. Indeed, fix $B\in\mathcal F^+(E,E;\mathbb R)$. Since $B$ has finite type, there exist
$\varphi_1,\ldots,\varphi_m,\psi_1,\ldots,\psi_m\in E^*$ such that $$ B(f,g)=\sum_{j=1}^m\varphi_j(f)\psi_j(g), \qquad f,g\in E. $$
If $(r_k)_k$ is the Rademacher sequence, then $r_k \cvf 0$ in $E$, and so
$$ |A(r_k,r_k) - B(r_k, r_k)| = \Big|1 -  \displaystyle \sum_{j=1}^m\varphi_j(r_k)\psi_j(r_k)\Big| \longrightarrow  1.$$
By taking $k \to \infty$, in the inequality below
$$  |A-B|(\boldsymbol{1}, \boldsymbol{1}) = |A-B|(|r_k|, |r_k|)  \geq |(A-B)(r_k, r_k)|, $$
we have that $|A-B|(\boldsymbol{1}, \boldsymbol{1}) \geq 1$. Therefore, no sequence $(A_k)_k \subset B_{\mathcal{F}^+(E, E; \R)}$ can satisfy $\displaystyle \lim_{k \to \infty} |A_k - A|(f,g) = 0$ for all $f,g \in E^{+}$.
\end{examples}

We now proceed to the polynomial case. First, we recall that the $n$-fold positive projective symmetric tensor product of $E$, denoted by
$\widehat{\otimes}_{n, s, |\pi|} E$,
is a Banach lattice endowed with the positive projective symmetric tensor
norm: 
$$
\norma{u}_{s, |\pi|} = \inf \conj{\sum_{i=1}^k \norma{x_{i}}^n}{|u| \leq
  \sum_{i=1}^k \otimes^n x_i, \, x_i \in E^+},
$$
where  $\otimes^n x = x \, \otimes \stackrel{n}{\cdots} \otimes \, x$ for every $x \in E$.  Let  $\theta_n\colon E \to \widehat{\otimes}_{n, s, |\pi|} E$ be the canonical $n$-homogeneous polynomial given by $\theta_n(x) =\otimes^n x$. We note that $\theta_n$
is a lattice homomorphism. For every $P \in \mathcal{P}^r(^n E, F)$ there
exists a unique regular linear operator $P^\otimes\colon \widehat{\otimes}_
{n, s,|\pi|} E \to  F$, called the linearization of $P$, such that $P(x) =
P^\otimes(\theta_n( x))$ for every $x \in E$. 
 The operator
$$\Phi\colon  \mathcal{P}^r(^n E; F) \to  \mathcal{L}^r\left(\widehat{\otimes}_{n, s, |\pi|} E; F\right),~ \Phi(P) = P^\otimes, $$ is an isometric isomorphism and a lattice homomorphism \cite[Proposition 3.4]{bubuskes}. Making the necessary adaptations, we obtain the following polynomial versions of Theorem \ref{teoret1}, Theorem \ref{teoret2}, and Proposition \ref{propapret}, respectively. The proofs are omitted, as they are analogous to those of the previous results.

\begin{theorem} \label{teoretpol1}
    Let $E$ and $F$ be Banach lattices  such that  $F$ is Dedekind complete and  every $n$-homogeneous positive polynomial from $E$ to $F$ attains its norm. Assume that for every $P\in B_{\mathcal{P}^{+}(^nE;F)}$ there exists a sequence $(P_{k})_k\subset B_{\mathcal{P}^+(^nE;F)}$ such that $T_{P_k}$ has finite type for each $k \in \N$ and 
    $$\lim_{k\to \infty} \||P_{k}-P|(x)\|=0$$
    for every $x\in E^{+}$. Then, every $n$-homogeneous positive polynomial from $E$ to $F$ is weakly sequentially continuous.
\end{theorem}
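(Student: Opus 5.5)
We will mimic the proof of Theorem \ref{teoret1}, replacing the Fremlin tensor product by the positive projective symmetric tensor product $\widehat{\otimes}_{n,s,|\pi|}E$ and using the map $\Phi(P)=P^\otimes$, which is an isometric isomorphism and a lattice homomorphism \cite[Proposition 3.4]{bubuskes}. Fix $P\in\mathcal{P}^+(^nE;F)$ with $\norma{P}\le 1$, and let $(P_k)_k\subset B_{\mathcal{P}^+(^nE;F)}$ be the sequence given by the hypothesis. Each $T_{P_k}$ has finite type, so each $P_k$ is weakly sequentially continuous: it is a finite sum of terms $\varphi_{1}(x)\cdots\varphi_n(x)y$ with $\varphi_i\in E^*$.

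\textbf{First step: $P_k^\otimes\to P^\otimes$ in the $|SOT|$ of $\mathcal{L}^r(\widehat{\otimes}_{n,s,|\pi|}E;F)$.} Since $\Phi$ is a lattice homomorphism, $|P_k^\otimes-P^\otimes|=|P_k-P|^\otimes$ and $|P_k-P|^\otimes(\theta_n(x))=|P_k-P|(x)$. So we must show $\norma{|P_k-P|^\otimes(z)}\to0$ for every $z\ge0$. Take $\varepsilon>0$. We approximate $z$ in $\norma{\cdot}_{s,|\pi|}$ by an element $w=\sum_{i=1}^m\otimes^n x_i$ with $x_i\in E^+$. By the hypothesis, $\norma{|P_k-P|(x_i)}<\varepsilon/(2m)$ for large $k$. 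Since $\norma{|P_k-P|^\otimes}\le 2$, the three-term estimate of Theorem \ref{teoret1} then gives $\norma{|P_k^\otimes-P^\otimes|(z)}<\varepsilon$ eventually.

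\textbf{Main obstacle: the density of the cone generated by $\{\otimes^n x: x\in E^+\}$ in the positive cone of $\widehat{\otimes}_{n,s,|\pi|}E$.} This is the symmetric analogue of the fact from \cite[p. 850]{bubuskes} used in Theorem \ref{teoret1}. I would first try to get it from the construction of the symmetric Fremlin tensor product in \cite{bubuskes}. If that fails, I would argue as follows. Positive elements are limits of positive elements of the vector sublattice generated by $\theta_n(E)$. A positive $u$ in that sublattice is dominated by some $\otimes^n e$, and elements of the principal ideal can be approximated from the cone, by polarization together with the lattice-homomorphism property of $\theta_n$. Alternatively, the $|SOT|$ convergence only needs to be tested on a set whose positive span is dense, and the regular norm bound extends it.

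\textbf{Conclusion.} Set $C:=\mathcal{P}^+_{wsc}(^nE;F)\cap B_{\mathcal{P}^r(^nE;F)}$ and $C^\otimes=\Phi(C)$. Then $C^\otimes$ is norm-closed and convex, because weak sequential continuity is preserved under norm limits and convex combinations. Every positive $n$-homogeneous polynomial attains its norm, and $\norma{P^\otimes}=\norma{P}$ with $P^\otimes(\theta_n(x))=P(x)$. Hence every positive operator $S$ on $\widehat{\otimes}_{n,s,|\pi|}E$ attains its norm: $S=Q^\otimes$ for a positive polynomial $Q$, and $Q$ attains its norm at some $x$, so $S$ attains it at $\theta_n(x)$, which has norm $\norma{x}^n\le1$. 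By \cite[Lemma 2.9]{luizmiranda} (as used in the proof of Theorem \ref{teoret1}), $C^\otimes$ is sequentially $|SOT|$-closed. Since $P_k^\otimes\in C^\otimes$ and $P_k^\otimes\to P^\otimes$ in the $|SOT|$, we get $P^\otimes\in C^\otimes$, so $P$ is weakly sequentially continuous. The case of arbitrary norm follows by scaling.
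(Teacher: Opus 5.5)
Your architecture is the one the paper intends: it omits this proof as analogous to Theorem \ref{teoret1}. Your concluding step is sound: norm attainment transfers to positive operators on $\widehat{\otimes}_{n,s,|\pi|}E$ through $\theta_n$, $C^\otimes$ is norm-closed and convex, and the $P_k$ lie in $C$.

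The gap is the density statement your first step relies on. The cone generated by $\{\otimes^n x: x\in E^+\}$ is \emph{not} dense in the positive cone of $\widehat{\otimes}_{n,s,|\pi|}E$. Take $E=\R^2$ and $n=2$. Then $\widehat{\otimes}_{2,s,|\pi|}\R^2$ is the sublattice of $\R^2\overline{\otimes}\,\R^2\cong\R^{2\times 2}$ (entrywise order) generated by $\theta_2(\R^2)$, i.e.\ the symmetric $2\times 2$ matrices. The element $z=e_1\otimes e_2+e_2\otimes e_1=\otimes^2(e_1+e_2)-\otimes^2e_1-\otimes^2e_2$ is positive. But every $W=\sum_i\otimes^2x_i$ with $x_i\in E^+$ is positive semidefinite, so $W_{12}^2\le W_{11}W_{22}$. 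This forces $\max\{W_{11},W_{22},|1-W_{12}|\}\ge\frac12$, so $z$ stays at positive distance from that cone.

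Consequently the three-term estimate with $w$ in the cone cannot be carried out. Your suggested repair, approximating elements of a principal ideal from the cone via polarization, cannot work either, since polarization only yields signed combinations of the $\otimes^n x$. Your ``alternatively'' sentence does not settle it. Read with nonnegative combinations, it is the same false claim. Read with linear spans, it needs a separate proof that $\operatorname{span}\theta_n(E)$ is dense in the completion of the generated sublattice. This is exactly where the proof of Theorem \ref{teoret1} does not transfer verbatim.

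The missing idea is that you need domination, not approximation, because $S_k:=|P_k-P|^\otimes$ is a positive operator. The domination you need is a fact you already noted.
\begin{itemize}
\item Given $z\ge0$ and $\varepsilon>0$, choose $u$ in the vector sublattice generated by $\theta_n(E)$ with $\norma{z-u}_{s,|\pi|}<\varepsilon$. Replacing $u$ by $u^+$ preserves this, because $|z-u^+|\le|z-u|$.
\item The set of $v$ with $|v|\le\sum_{i=1}^m\otimes^n x_i$ for some $x_i\in E^+$ is an ideal containing $\theta_n(E)$, since $|\otimes^n x|=\otimes^n|x|$. Hence $u^+\le\sum_{i=1}^m\otimes^n x_i$ for suitable $x_i\in E^+$.
\item Then
\[
0\le S_k(z)\le S_k(|z-u^+|)+\sum_{i=1}^m S_k(\otimes^n x_i),
\]
so $\norma{|P_k^\otimes-P^\otimes|(z)}\le 2\varepsilon+\sum_{i=1}^m\norma{|P_k-P|(x_i)}$, and the last sum tends to $0$ by hypothesis.
\end{itemize}
With this in place of your density step, the rest of your argument goes through.
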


 \begin{theorem} \label{teoretpol2}
    Let $E$ be a reflexive Banach lattice and $F$ be a Dedekind complete Banach lattice. Assume that for every $P\in B_{\mathcal{P}^{+}(^nE;F)}$ there exists a sequence $(P_{k})_k\subset  B_{\mathcal{P}^+(^nE;F)}$ such that $T_{P_k}$ has finite type for each $k \in \N$ and
    $$\lim_{k\to \infty} \||P_{k}-P|(x)\|=0$$
    for every $x\in E^{+}$. Then, the following statements are equivalent: \\
    {\rm (1)} Every positive $n$-homogeneous polynomial from $E$ to $F$ attains its norm. \\
       {\rm (2)} Every positive $n$-homogeneous polynomial from $E$ to $F$ is weakly sequentially continuous.
\end{theorem}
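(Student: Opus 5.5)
The plan is to follow the proof of Theorem \ref{maintheo} for the direction (2)$\Rightarrow$(1) and otherwise argue directly via reflexivity, as in (1)$\Rightarrow$(2) there.

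\emph{(2)$\Rightarrow$(1).} This should follow from Theorem \ref{teoretpol1}, which is the polynomial analogue of Theorem \ref{teoret1}. Its hypotheses are exactly the assumed approximation property together with norm attainment of every positive $n$-homogeneous polynomial from $E$ into $F$. If Theorem \ref{teoretpol1} is unpacked, the argument runs as follows. Let $P \in B_{\mathcal{P}^+(^nE;F)}$, and let $(P_k)_k$ be the approximating sequence whose associated symmetric operators $T_{P_k}$ have finite type.

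First, we transfer the convergence $\||P_k - P|(x)\| \to 0$ for $x \in E^+$ to $|SOT|$-convergence $P_k^\otimes \to P^\otimes$ in $\mathcal{L}^r(\widehat{\otimes}_{n,s,|\pi|}E;F)$. To do this, we use that $\Phi$ is an isometric lattice isomorphism, so that $|P_k - P|^\otimes = |P_k^\otimes - P^\otimes|$. We also use that the positive cone generated by $\{\otimes^n x : x \in E^+\}$ is dense in the positive cone of the tensor product. The $\varepsilon/2$ estimate is then the same as in the proof of Theorem \ref{teoret1}.

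Second, set $C := \mathcal{P}^+_{wsc}(^nE;F) \cap B$ and let $C^\otimes$ be its image under $\Phi$. Each $P_k$ is weakly sequentially continuous, since a polynomial whose symmetric operator has finite type is a finite sum of products of functionals times vectors, so $P_k^\otimes \in C^\otimes$. Norm attainment of positive polynomials transfers to norm attainment of positive linear operators on $\widehat{\otimes}_{n,s,|\pi|}E$. The main obstacle is this transfer: it requires that the norm of a positive linear operator on the symmetric Fremlin tensor product is attained on some $\otimes^n x$ with $x \in B_E^+$. The plan is to check this from the formula for $\|\cdot\|_{s,|\pi|}$. For positive $T$, the sup over the unit ball is governed by the elements $\otimes^n x$ with $x \geq 0$, because $|u| \leq \sum \otimes^n x_i$ gives $|Tu| \leq \sum T(\otimes^n x_i)$.

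Granting this, \cite[Lemma 2.9]{luizmiranda} shows that $C^\otimes$ is sequentially $|SOT|$-closed. Hence $P^\otimes \in C^\otimes$ and $P$ is weakly sequentially continuous. Scaling removes the restriction $\|P\|\le 1$.

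\emph{(2)$\Rightarrow$(1) is done; now (1)$\Rightarrow$(2).} Let $P \geq 0$ be positive and choose $x_k \in B_E$ with $\|P(x_k)\| \to \|P\|$. By reflexivity of $E$, pass to a subsequence with $x_k \cvf x \in B_E$, using weak closedness of $B_E$. Weak sequential continuity then gives $P(x_k) \to P(x)$ in norm, so $\|P(x)\| = \|P\|$ and $P$ attains its norm. This direction needs neither the approximation hypothesis nor Dedekind completeness.
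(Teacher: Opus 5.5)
Your architecture matches the paper's. You get wsc $\Rightarrow$ norm attainment from reflexivity, and norm attainment $\Rightarrow$ wsc from Theorem~\ref{teoretpol1}: linearize through $\Phi$, upgrade pointwise convergence of $|P_k-P|$ to $|SOT|$-convergence $P_k^\otimes\to P^\otimes$, then apply \cite[Lemma 2.9]{luizmiranda}. Your check that a positive $T$ on $\widehat{\otimes}_{n,s,|\pi|}E$ satisfies $\norma{T}=\norma{T\circ\theta_n}$ is correct, and it makes explicit a step the paper leaves implicit. Your labels are swapped relative to the statement: what you call (2)$\Rightarrow$(1) is norm attainment $\Rightarrow$ wsc, which is (1)$\Rightarrow$(2) here. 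This is harmless, since you prove both directions.

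The $|SOT|$ transfer, however, rests on a false claim. The cone generated by $\{\otimes^n x: x\in E^+\}$ is \emph{not} dense in the positive cone of $\widehat{\otimes}_{n,s,|\pi|}E$. Take $E=\R^2$, $n=2$ and $u=\theta_2(e_1+e_2)-\theta_2(e_1)-\theta_2(e_2)=e_1\otimes e_2+e_2\otimes e_1$. For every positive $P$ we have $P^\otimes(u)=2T_P(e_1,e_2)\ge 0$, and via $\Phi$ these are all the positive functionals, so $u\ge 0$. But for $Q(x)=(x_1-x_2)^2$, the functional $Q^\otimes$ is $\ge 0$ on every $\theta_2(x)$, hence on the closed cone they generate, while $Q^\otimes(u)=-2$.

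This is exactly where the polynomial case differs from Theorem~\ref{teoret1}. There, $z$ is approximated by positive sums of elementary tensors at which convergence is known. Here, convergence is only known on the diagonal $\theta_n(E^+)$.

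The fix is to dominate instead of approximate. Given $z\ge 0$ and $\delta>0$, choose $w$ in the dense sublattice with $\norma{z-w}_{s,|\pi|}<\delta$, and $x_1,\dots,x_m\in E^+$ with $|w|\le\sum_i\otimes^n x_i$; these exist by the very definition of $\norma{w}_{s,|\pi|}$. The operator $|P_k^\otimes-P^\otimes|=|P_k-P|^\otimes\le P_k^\otimes+P^\otimes$ is positive with norm at most $2$, and $z\le|z-w|+|w|$. Therefore
\[
\norma{|P_k^\otimes-P^\otimes|(z)}\le 2\delta+\sum_{i=1}^m\norma{|P_k-P|(x_i)},
\]
whose $\limsup_k$ is at most $2\delta$. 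With this replacement the rest of your argument goes through.
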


\begin{proposition} \label{apretpol}
    Let $E$ and $F$ be Banach lattices with $F$ being Dedekind complete. If $E$ has the SPAP, then, for every
    $P\in B_{\mathcal{P}^{+}(^nE;F)}$ there exists a sequence $(P_{k})_k\subset B_{\mathcal{P}^+(^nE;F)}$ such that $T_{P_k}$ has finite type for each $k \in \N$ and
    $$\lim_{k\to \infty} \||P_{k}-P|(x)\|=0$$
    for every $x\in E^{+}$.
\end{proposition}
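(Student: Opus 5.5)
Since $E$ has the SPAP, Definition \ref{apret} gives a sequence $(S_k)_k \subset B_{\mathcal{F}^+(E;E)}$ with $\norma{|S_k - {\rm id}_E|(x)} \to 0$ for every $x \in E^+$. For $P \in B_{\mathcal{P}^+(^nE;F)}$ I will take
$$ P_k(x) := P(S_k(x)) = T_P(S_k x, \dots, S_k x), \quad x \in E, $$
so $T_{P_k}(x_1,\dots,x_n) = T_P(S_k x_1, \dots, S_k x_n)$. This multilinear operator is symmetric and positive, since $S_k \geq 0$ and $T_P \geq 0$. It has finite type: expand each $S_k x_j = \sum_i \varphi_i(x_j) u_i$ and use multilinearity, exactly as in the proof of Proposition \ref{propap}. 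Finally, $\norma{P_k} \leq \norma{P}\norma{S_k}^n \leq 1$. Hence $(P_k)_k \subset B_{\mathcal{P}^+(^nE;F)}$.

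The remaining task is to show $\norma{|P_k - P|(x)} \to 0$ for $x \in E^+$. I will first bound the multilinear modulus, then transfer the bound to the polynomial.

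\emph{Pointwise estimate.} For $z_1,\dots,z_n \in E^+$, telescope
$$ T_P(S_kz_1,\dots,S_kz_n) - T_P(z_1,\dots,z_n) = \sum_{j=1}^n T_P(S_kz_1,\dots,S_kz_{j-1}, S_kz_j - z_j, z_{j+1},\dots,z_n). $$
Positivity of $T_P$ and of $S_k$ then gives
$$ |(T_{P_k} - T_P)(z_1,\dots,z_n)| \leq \sum_{j=1}^n T_P(S_kz_1,\dots,|S_k - {\rm id}_E|(z_j),\dots,z_n). $$

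\emph{Passing to the modulus.} Summing over decompositions $x_j = \sum_i z_{j,i}$ into positive pieces and using additivity of the right-hand side, \cite[Proposition 2.14]{loane} yields
$$ |T_{P_k} - T_P|(x_1,\dots,x_n) \leq \sum_{j=1}^n T_P(S_kx_1,\dots,|S_k-{\rm id}_E|(x_j),\dots,x_n). $$

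\emph{Transfer to the polynomial.} This is the step I expect to be the main obstacle. It requires the identification of $|P_k - P|$ with the polynomial associated to $|T_{P_k} - T_P|$, i.e.
$$ |P_k - P|(x) = |T_{P_k} - T_P|(x,\dots,x). $$
I would justify this through the linearization $\Phi$, which is an isometric lattice isomorphism \cite[Proposition 3.4]{bubuskes}, together with the corresponding isomorphism for symmetric multilinear operators, via the lattice homomorphism relating $\widehat{\otimes}_{n,s,|\pi|}E$ and the full Fremlin tensor product. Alternatively, I would use the known fact (e.g. in \cite{loane}) that the modulus of a regular polynomial is given by the modulus of its symmetric multilinear operator restricted to the diagonal.

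\emph{Conclusion.} Setting $x_j = x$ and taking norms, with $\norma{T_P} \leq n^n/n!$, $\norma{S_k} \leq 1$ and $|S_k - {\rm id}_E|(x) \geq 0$, gives
$$ \norma{|P_k - P|(x)} \leq n \cdot \frac{n^n}{n!}\, \norma{x}^{n-1}\, \norma{|S_k - {\rm id}_E|(x)} \to 0. $$
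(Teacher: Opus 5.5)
Your proposal is correct and follows essentially the same route as the paper: the same approximants $P_k = P\circ S_k$, the same telescoping estimate summed over positive decompositions via the sup formula for the modulus, and the same final norm bound (the paper writes it out only for $n=2$). The transfer step you flag as the main obstacle is handled in the paper simply by citing Lemma 2.16 of \cite{loane}. Your own justification is also sound: $P\mapsto T_P$ is by definition an order isomorphism onto the symmetric regular operators, and these form a Riesz subspace. Hence $|P_k-P|(x)=|T_{P_k}-T_P|(x,\dots,x)$.
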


\begin{proof}
    Suppose that $E$ has the SPAP.  For the sake of clarity, we prove the proposition for $n=2$. Let $P\in B_{\mathcal{P}^{+}(^{2}E;F)}$. By assumption,  there exists a sequence $(S_{k})_k\subset B_{\mathcal{F}^{+}(E;E)}$ such that
    $$\lim_{k\to\infty}\||S_{k}-{\rm id}_E|(x)\|=0,$$
    for every $x\in E^{+}$. For each $k\in\mathbb{N}$, define $A_{k}\colon E\times E\longrightarrow F$  by $$A_{k}(x_{1},x_{2})=T_{P}(S_{k}(x_{1}),S_{k}(x_{2})).$$ 
    Clearly, each $A_k$ is a positive bilinear operator; it is symmetric since $T_P$ is symmetric, and of finite type since $S_k$ is of finite type.
     Let $P_{k}$ be the positive $2$-homogeneous polynomial such that $P_{k}(x)=T_{P}((S_{k}(x)),(S_{k}(x)))=:T_{P_{k}}(x,x)$, for all $x\in E$.  Note that, for each $k\in\mathbb{N}$, $Q_{k}=P_{k}-P$ is a regular polynomial and its associated symmetric bilinear operator is $A_{k}-T_P$. Given $x, z_1,\ldots,z_r, w_1, \dots w_l \in E^+$ such that $\displaystyle \sum_{i=1}^r z_i=x = \displaystyle \sum_{j=1}^l w_j$, the same argument used in the proof of Proposition \ref{propapret} yields that  
     \begin{align*}
         \sum_{i, j=1}^{r,l} |(A_k - T_P)(z_i, w_j)| &  \leq \sum_{i,j = 1}^{r,l}  \Big[T_P(S_k(z_i), |S_k - {\rm id}_E|(w_j)) + T_P(|S_k - {\rm id_{E}}|(z_i), w_j)\Big]\\
         & = T_P(S_k(x), |S_k - {\rm id}_E|(x)) + T_P(|S_k - {\rm id}_E|(x), x) \\
         & = T_P(|S_k - {\rm id}_E|(x), S_k(x) + x) 
     \end{align*}
     Taking the supremum of all such $z_{1},\ldots,z_r,w_1, \dots, w_l$, we get from \cite[Lemma 2.16]{loane} that 
     $$|P_{k}-P|(x)\leq T_P(|S_k - {\rm id}_E|(x), S_k(x) + x)$$
     for every $x\in E^{+}$. Hence, 
     \begin{align*}
         \||P_{k}-P|(x)\| & \leq \norma{T_P(|S_k - {\rm id}_E|(x), S_k(x) + x}) \\
         & \leq \|T_P\|\||S_{k}-id_E|(x)\| \|S_k(x) + x \| \\
         & \leq \norma{T_P} \||S_{k}-id_E|(x)\| 2 \norma{x} \longrightarrow 0.
     \end{align*}

\end{proof}

\begin{example} \rm
   (1)  Let $n \in \N$ and $p > n$ be given. By \cite[4.3]{alencarfloret}, all $n$-homogeneous polynomials from $\ell_p$ into $\R$ are weakly sequentially continuous. Thus, by \cite[Lemma 2.1]{bu2015}, for any Banach space $F$, every compact $n$-homogeneous polynomial from $\ell_{p}$ into $F$ is weakly sequentially continuous. We can use \cite[Theorem 4.4]{botmir} to obtain Banach lattices $F$ such that all positive $n$-homogeneous polynomials from $\ell_{p}$ into $F$ are compact, hence weakly sequentially continuous, hence norm-attaining by Theorem \ref{teoretpol2}.   \\
   (2) Recall that $P((a_j)_j) = \displaystyle \sum_{j=1}^\infty a_j^2 r_j$, where $(r_j)_j$ denotes the sequence of the Rademacher functions, defines a non-compact $2$-homogeneous polynomial. Thus, as $\ell_4$ is reflexive, $P$ cannot be weakly sequentially continuous, and so by Theorem \ref{maintheopol}, there are non-norm attaining $2$-homogeneous polynomials from $\ell_4$ into $L_1([0,1])$. However, by item (1) above, all positive $2$-homogeneous polynomials from $\ell_4$ into $L_1([0,1])$ are norm-attaining.  \\
   (3) Let $E$ be an AM-space with a unit $e$ and $F$ be a Dedekind complete Banach lattice. We claim that every $P \in \mathcal{P}^{+}(^n E; F)$ attains its norm in $e$. Indeed, given $x_1, \dots, x_n \in E$ with $x_i \neq 0$ for every $i$, we get that
   \begin{align*}
       \norma{ T_P(x_1, \dots, x_n)} & = \norma{T_P(\frac{x_1}{\norma{x_1}}, \dots, \frac{x_n}{\norma{x_n}})} \norma{x_1} \cdots \norma{x_n} \\
       & \leq \norma{T_P(e, \dots, e)} \norma{x_1} \cdots \norma{x_n} \\
       & = \norma{P(e)} \norma{x_1} \cdots \norma{x_n} \\
       & \leq \norma{P} \norma{x_1} \cdots \norma{x_n},
   \end{align*}
which implies that $\norma{T_P} \leq \norma{P}$, and so $\norma{T_P}=\|P\|$. Recalling that $T_P$ attains its norm in $(e, \dots, e)$ by Examples \ref{exret}, we obtain
$ \norma{P(e)} = \norma{T_P(e, \dots, e)} = \norma{T_P} = \norma{P}. $
\end{example}

We notice that while Theorems \ref{maintheo} and \ref{maintheopol} were stated with an additional condition, their lattice versions presented in this Section were stated without this third equivalence. In order to obtain it, we need to assume that $F$ is an atomic Banach lattice with order continuous norm. Below, we prove these new results for  a more stricter class:

\begin{corollary} \label{corfin1}
    Let $E_1, \dots, E_n$ be a reflexive Banach lattice and let $F$ be a Banach lattice with order-continuous norm and order given by a basis. Assume, in addition, that at least $n-1$ among $E_1, \dots, E_n$ have the SPAP.
    Then, the following are equivalent: \\
    {\rm (1)} Every positive $n$-linear operator $T: E_1 \times \cdots \times E_n \to F$ is weakly sequentially continuous. \\
    {\rm (2)} Every positive $n$-linear operator $T: E_1 \times \cdots \times E_n \to F$ attains its norm.\\
    {\rm (3)} All positive $n$-linear forms $E_1 \times \cdots \times E_n \to \R$ are weakly sequentially continuous and all positive $n$-linear operators operators $E_1 \times \cdots \times  E_n \to F$ are compact.
\end{corollary}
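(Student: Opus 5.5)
The plan is to reduce everything to results already established. First we place $F$ inside Proposition \ref{propapret}. An order continuous Banach lattice is Dedekind complete. If the order of $F$ is given by a basis $(e_j)_j$, then each $e_j$ is an atom and they are pairwise disjoint. Hence $F$ is atomic with order continuous norm. It is also separable, since it has a basis. If $F$ is infinite dimensional, Proposition \ref{caracspap} then gives that $F$ has the SPAP. If $F$ is finite dimensional, the basis projections $R_N$ eventually equal ${\rm id}_F$, so the SPAP holds trivially.

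Since $F$ and at least $n-1$ of the $E_j$ have the SPAP, Proposition \ref{propapret}(2) provides the sequential $|SOT|$-approximation of every $A\in B_{\mathcal{L}^+(E_1,\dots,E_n;F)}$ by elements of $B_{\mathcal{F}^+(E_1,\dots,E_n;F)}$. Theorem \ref{teoret2} then yields (1)$\Leftrightarrow$(2) directly, because the $E_j$ are reflexive.

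For (1)$\Rightarrow$(3), let $\varphi$ be a positive $n$-linear form and fix a positive normalized basis vector $e\in F^+$. Then $\varphi(\cdot)e$ is a positive $n$-linear operator. By (1) it is weakly sequentially continuous, and therefore so is $\varphi$. Compactness follows exactly as in (1)$\Rightarrow$(3) of Theorem \ref{maintheo}. Given bounded sequences $(x_{j,k})_k\subset B_{E_j}$, reflexivity produces a common subsequence along which each coordinate converges weakly. Weak sequential continuity of $A$ then makes $A(x_{1,k_i},\dots,x_{n,k_i})$ norm convergent, so $A(B_{E_1}\times\cdots\times B_{E_n})$ is relatively compact.

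The expected obstacle is (3)$\Rightarrow$(1). Lemma \ref{lemmafin} needs \emph{all} forms to be weakly sequentially continuous, while (3) only gives this for positive forms. We cannot pass to differences of positive forms, because a general continuous form need not be regular. So we must reprove the lemma using only positive data.

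Let $A\ge 0$ be compact and let $x_{j,k}\cvf x_j$. Since $A$ is compact, it suffices to show that every norm-convergent subsequence of $A(x_{1,k},\dots,x_{n,k})$ has limit $A(x_1,\dots,x_n)$. It is enough to identify this limit weakly, by testing against functionals $y^*\in (F^*)^+$, since $F^*=(F^*)^+-(F^*)^+$. For such $y^*$, the form $y^*\circ A$ is a positive $n$-linear form. By (3) it is weakly sequentially continuous, so $y^*(A(x_{1,k},\dots,x_{n,k}))\to y^*(A(x_1,\dots,x_n))$. Hence every norm cluster point equals $A(x_1,\dots,x_n)$. Combined with relative compactness of the sequence, this gives norm convergence of the whole sequence, which proves (1).

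This argument does not use the order basis of $F$ in the last step. That is fine: the basis hypothesis was needed only to obtain the SPAP for $F$ and hence the equivalence (1)$\Leftrightarrow$(2).
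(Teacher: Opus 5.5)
Your proposal is correct. For (1)$\Leftrightarrow$(2) and (1)$\Rightarrow$(3) you argue as the paper does: the SPAP of $F$, then Proposition \ref{propapret}(2) and Theorem \ref{teoret2}, then the reflexivity argument of Theorem \ref{maintheo}. You are more explicit about why an order basis consists of pairwise disjoint atoms. You also handle finite-dimensional $F$, which Proposition \ref{caracspap} does not cover.

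The genuine difference is in (3)$\Rightarrow$(1). The paper linearizes. It shows that $x_{1,k}\otimes\cdots\otimes x_{n,k}\to x_1\otimes\cdots\otimes x_n$ weakly in the Fremlin tensor product, by testing against positive functionals, i.e.\ positive forms. It then combines Bu's inclusion $A^\otimes(B)\subset\overline{\rm sco}(A(B_{E_1}^+\times\cdots\times B_{E_n}^+))$ with Wickstead's theorem on solid hulls of relatively compact sets in atomic order continuous lattices. This makes $A^\otimes$ compact, hence completely continuous.

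You instead work with $A$ itself. The positive forms $y^*\circ A$, $y^*\in(F^*)^+$, give weak convergence of $A(x_{1,k},\dots,x_{n,k})$ to $A(x_1,\dots,x_n)$, because $F^*=(F^*)^+-(F^*)^+$. Relative norm compactness of the sequence then upgrades this to norm convergence. State explicitly that this uses boundedness of the weakly convergent $x_{j,k}$, which places the images in a multiple of $A(B_{E_1}\times\cdots\times B_{E_n})$.

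Your route is shorter and needs no tensor products, Bu's lemma or Wickstead's theorem. It also works for any Banach lattice $F$ and any $E_j$. Consequently the basis and atomicity hypotheses on $F$ are needed only to obtain the SPAP, contrary to the remark preceding the corollary. For the same reason, (3) could be added to Theorem \ref{teoret2} as well. The same argument also proves Lemma \ref{lemmafin} without linearization. In exchange, the paper's route yields the stronger structural fact that the linearization $A^\otimes$ is compact on $E_1\widehat{\otimes}_{|\pi|}\cdots\widehat{\otimes}_{|\pi|}E_n$.
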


\begin{proof}
    Notice that our assumption in $F$ implies that $F$ has the SPAP, and so we get (1)$\iff$(2). The implication (1)$\Rightarrow$(3) follows by the same argument used in the proof of Theorem \ref{maintheo}.
    
    (3)$\Rightarrow$(1)     Let $A: E_1 \times \cdots \times E_n \to F$ be a positive $n$-linear operator and let, for each $j =1, \dots, n$, $(x_{j,k})_k$ be a weakly convergent sequence in $E_j$ with weak limit $x_j$. We prove first that the mapping $\otimes_n: E_1 \times \cdots \times E_n \to E_1 \widehat{\otimes}_{|\pi|} \cdots \widehat{\otimes}_{|\pi|} E_n$ satisfies
    $$x_{1,k} \otimes \cdots \otimes x_{n,k} \cvf x_1 \otimes \cdots \otimes x_n$$
    in $E_1 \widehat{\otimes}_{|\pi|} \cdots \widehat{\otimes}_{|\pi|} E_n$. To see this, consider a positive linear functional $\varphi \in (E_1 \widehat{\otimes}_{|\pi|} \cdots \widehat{\otimes}_{|\pi|} E_n)^*$ and the positive $n$-linear form $T: E_1 \times \cdots \times E_n \to \R$ such that $T^\otimes = \varphi$. Our assumption yields that $T$ is weakly sequentially continuous, and so 
    $$ \lim_{k \to \infty} \varphi(x_{1,k} \otimes \cdots \otimes x_{n,k}) = \lim_{k \to \infty} T(x_{1,k}, \dots, x_{n,k}) = T(x_1, \dots, x_n) = \varphi(x_1 \otimes \dots \otimes x_n). $$
    From this it is easy to check that $x_{1,k} \otimes \cdots \otimes x_{n,k} \cvf x_1 \otimes \cdots \otimes x_n$
    in $E_1 \widehat{\otimes}_{|\pi|} \cdots \widehat{\otimes}_{|\pi|} E_n$. On the other hand, it follows from \cite[Lemma 3.3]{bupams}
    that 
    $$ A^\otimes (B_{E_1 \widehat{\otimes}_{|\pi|} \cdots \widehat{\otimes}_{|\pi|} E_n}) \subset \overline{\rm sco}(A(B_{E_1}^+ \times \cdots \times B_{E_n}^+)), $$
    where $\overline{\rm sco} (S)$ denotes the closed solid-convex hull of a subset $S$ of a Banach lattice. 
    Notice that our second assumption yields that $A$ is compact, and so $A(B_{E_1}^+ \times \cdots \times B_{E_n}^+)$ is a relatively compact subset of $F$. As we are assuming that the order of $F$ is continuous and given by a basis, it is an atomic Banach lattice with order continuous norm, and so the solid hull of relatively compact subsets of $F$ are also relatively compact (see \cite[Theorem 5]{wickstead}). Thus, $A^\otimes: E_1 \widehat{\otimes}_{|\pi|} \cdots \widehat{\otimes}_{|\pi|} E_n \to F$ is a compact operator, hence completely continuous. Therefore, 
    $$ \lim_{k \to \infty}A(x_{1,k}, \dots, x_{n,k}) = \lim_{k \to \infty} A^{\otimes}(x_{1,k} \otimes \cdots \otimes x_{n,k}) = A^\otimes(x_1 \otimes \cdots \otimes x_n) = A(x_1, \dots, x_n), $$
proving that $A$ is weakly sequentially continuous.
\end{proof}

The polynomial version of Corollary \ref{corfin1} follows analogously using the compactness of the linearization of a positive homogeneous polynomial given by Li and Bu \cite[Theorem 4.1]{libumaj}:

\begin{corollary} \label{corfin2}
    Let $E$ be a reflexive Banach lattice with the SPAP and let $F$ be a Banach lattice with order-continuous norm and order given by a basis. Then, the following are equivalent: \\
    {\rm (1)} Every positive $n$-homogeneous polynomial $P: E \to F$ is weakly sequentially continuous. \\
    {\rm (2)} Every positive $n$-homogeneous polynomial $P:E \to F$ is norm attaining. \\
    {\rm (3)} All positive $n$-linear polynomials $E \to \R$ are weakly sequentially continuous and all positive $n$-homogeneous polynomials $E \to F$ are compact. 
\end{corollary}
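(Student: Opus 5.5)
We follow the proof of Corollary \ref{corfin1}, replacing the Fremlin tensor product by the positive projective symmetric tensor product $\widehat{\otimes}_{n,s,|\pi|} E$ and the multilinear results by their polynomial counterparts.

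For (1)$\iff$(2), we first note that a Banach lattice with order continuous norm whose order is given by a basis is atomic and separable. By Proposition \ref{caracspap}, $F$ therefore has the SPAP; in particular it is Dedekind complete, being order continuous. Since $E$ has the SPAP, Proposition \ref{apretpol} yields, for every $P \in B_{\mathcal{P}^+(^nE;F)}$, a sequence of positive polynomials in the unit ball with finite-type associated symmetric operators converging to $P$ in the absolute sense on $E^+$. As $E$ is reflexive, Theorem \ref{teoretpol2} then gives (1)$\iff$(2). For (1)$\Rightarrow$(3), we argue as in Theorem \ref{maintheo}. If $\varphi \in \mathcal{P}^+(^nE)$ and $0\le y\in S_F$, then $x\mapsto \varphi(x)y$ is positive, so it is weakly sequentially continuous by (1), and hence so is $\varphi$. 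Compactness follows by extracting weakly convergent subsequences from bounded sequences in the reflexive space $E$ and applying (1).

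The main work is (3)$\Rightarrow$(1). Fix a positive $P$ and a sequence $x_k \cvf x$. In the first step we show that $\theta_n(x_k) \cvf \theta_n(x)$ in $\widehat{\otimes}_{n,s,|\pi|}E$. For a positive functional $\varphi$ on the tensor product, the polynomial $Q=\varphi\circ\theta_n$ is a positive scalar polynomial with $Q^\otimes=\varphi$. By (3) it is weakly sequentially continuous, so $\varphi(\theta_n(x_k))\to\varphi(\theta_n(x))$. Since the dual of a Banach lattice is spanned by its positive elements, weak convergence follows.

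In the second step we show that $P^\otimes$ is compact. Because $P$ is a compact positive polynomial, \cite[Theorem 4.1]{libumaj} should give that $P^\otimes$ is compact. If that result requires a hypothesis on $F$, we expect it is satisfied because $F$ is atomic with order continuous norm. Alternatively, we would argue as in Corollary \ref{corfin1}. The unit ball of $\widehat{\otimes}_{n,s,|\pi|}E$ is contained in the closed solid convex hull of $\{\otimes^n x : x\in B_E^+\}$, so $P^\otimes(B)$ lies in the closed solid convex hull of the relatively compact set $P(B_E^+)$. This hull is relatively compact by \cite[Theorem 5]{wickstead}.

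Finally, since $P^\otimes$ is compact it is completely continuous, so $P(x_k)=P^\otimes(\theta_n(x_k))\to P^\otimes(\theta_n(x))=P(x)$. I expect the second step to be the main obstacle: the symmetric analogue of the solid convex hull inclusion from \cite[Lemma 3.3]{bupams} has to be justified, and this is exactly what we would take from Li and Bu.
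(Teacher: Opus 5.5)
Your proposal is correct and follows the paper's route. You get (1)$\iff$(2) from Theorem~\ref{teoretpol2} and Proposition~\ref{apretpol}, using the SPAP of $E$ and the Dedekind completeness of $F$, and (1)$\Rightarrow$(3) as in Theorem~\ref{maintheo}. Your (3)$\Rightarrow$(1) mirrors Corollary~\ref{corfin1}: $\theta_n(x_k)\to\theta_n(x)$ weakly in $\widehat{\otimes}_{n,s,|\pi|}E$, and $P^\otimes$ is compact by Li and Bu's Theorem 4.1, which is exactly the ingredient the paper invokes, so your fallback through a symmetric solid-convex-hull inclusion is not needed.
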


\section{Further remarks and open questions}

In this final section, we present and briefly discuss a few open problems motivated by the results of this paper.

In view of Theorems \ref{maintheo} and \ref{maintheopol}, together with the examples presented in Section 2, we are led to the following question.

\medskip

\noindent {\bf Question 1:} Are there Banach spaces $X$ and $Y$ such that every $n$-homogeneous polynomial $P: X \to Y$ is norm-attaining, but there exists a non norm-attaining $n$-linear operator $A: X^n \to Y$?

\medskip

In connection with the previous question, we have the following easy consequence of Theorems \ref{maintheo} and \ref{maintheopol} which highlights the relationships between the multilinear and polynomial settings.

\begin{corollary} \label{corprob}
Let $n \in \N$, $X$ be a reflexive Banach space and  $Y$ be a Banach spaces. Consider the following implications: \\
{\rm (1)} Every $A \in \mathcal{L}(X^n; Y)$ is norm-attaining. \\
{\rm (2)} Every $A \in \mathcal{L}(X^n; Y)$ is weakly sequentially continuous. \\
{\rm (3)} Every $P \in \mathcal{P}(^n X; Y)$ is weakly sequentially continuous. \\
{\rm (4)} Every $P \in \mathcal{P}(^n X; Y)$ is norm-attaining. \\
If $X$ has the BAP, then $(1)\iff (2) \Rightarrow (3) \iff (4)$.
\end{corollary}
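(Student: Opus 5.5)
The plan is to obtain the two equivalences from the main theorems of Section 2, and the middle implication by restricting to the diagonal. Since $X$ has the BAP, applying Proposition \ref{propap}(1) with $X_1=\cdots=X_n=X$ shows that the approximation hypothesis of Theorem \ref{maintheo} holds for $(X,\dots,X;Y)$. As $X$ is reflexive, Theorem \ref{maintheo} then gives (1)$\iff$(2) at once.

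For (3)$\iff$(4) we use Theorem \ref{maintheopol}, so we must check its hypothesis. This follows from Proposition \ref{propappol}, which gives, for every $P\in\mathcal{P}(^nX;Y)$, a constant $\lambda(P)$ and, for each compact $K$ and $\varepsilon>0$, a polynomial $Q$ with $T_Q$ of finite type, $\norma{T_Q}\le\lambda$ and $\norma{P(x)-Q(x)}<\varepsilon$ on $K$. The paper writes that proof only for $n=2$. The general case is identical: take $T_Q(x_1,\dots,x_n)=T_P(Sx_1,\dots,Sx_n)$ with $S$ the finite-rank operator from the $\lambda_0$-AP, and telescope the difference $T_P(x,\dots,x)-T_P(Sx,\dots,Sx)$ into $n$ terms. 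Reflexivity of $X$ is assumed, so Theorem \ref{maintheopol} yields (3)$\iff$(4).

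It remains to show (2)$\Rightarrow$(3). Let $P\in\mathcal{P}(^nX;Y)$. Its symmetric $n$-linear operator $T_P$ belongs to $\mathcal{L}(X^n;Y)$, so by (2) it is weakly sequentially continuous. If $x_k\cvf x$, take $x_{j,k}=x_k$ for every $j$; weak sequential continuity of $T_P$ then gives $P(x_k)=T_P(x_k,\dots,x_k)\to T_P(x,\dots,x)=P(x)$, which is (3).

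None of these steps is a real obstacle. The only point needing care is that the approximation hypotheses of both theorems are supplied by the BAP of $X$ alone, with no condition on $Y$. This is exactly what Proposition \ref{propap}(1) and Proposition \ref{propappol} provide, once the routine $n$-fold extension of the latter is accepted.
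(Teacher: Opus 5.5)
Your proposal is correct and is exactly the argument the paper has in mind when it calls this an easy consequence of Theorems~\ref{maintheo} and~\ref{maintheopol}. The BAP of $X$ feeds Proposition~\ref{propap}(1) and Proposition~\ref{propappol} (extended routinely to general $n$) into those theorems, giving (1)$\iff$(2) and (3)$\iff$(4), while (2)$\Rightarrow$(3) comes from applying weak sequential continuity of $T_P$ along the diagonal, $P(x_k)=T_P(x_k,\dots,x_k)$.
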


We point out that the implication (3)$\Rightarrow$(2) of Corollary \ref{corprob} coincides with Problem 1 left by Castillo, García, and Gonzalo in \cite{castillo99} that, as far as we know, remains open. One could also address this same question to the Banach lattice case. In particular, we state the lattice version of Corollary \ref{corprob}:

\begin{corollary} \label{corprob2}
    Let $n \in \N$, $E$ be a reflexive Banach lattice and $F$ be a Dedekind complete Banach lattice. Consider the following implications: \\
    {\rm (1)} Every $A \in \mathcal{L}^+(E^n; F)$ is norm-attaining. \\
     {\rm (2)} Every $A \in \mathcal{L}^+(E^n; F)$ is weakly sequentially continuous. \\
     {\rm (3)} Every $P \in \mathcal{P}^+(E^n; F)$ is weakly sequentially continuous. \\
     {\rm (4)} Every $P \in \mathcal{P}^+(E^n; F)$ is norm-attaining. \\
     If $E$ has the SPAP, then $(1)\iff (2) \Rightarrow (3) \iff (4).$
\end{corollary}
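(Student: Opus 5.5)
The plan is to assemble the four implications from the lattice results already established, using the SPAP of $E$ to meet their approximation hypotheses. First, since $E$ has the SPAP, Proposition \ref{propapret}(1) applied with $E_1=\cdots=E_n=E$ shows that every $A\in B_{\mathcal{L}^+(E^n;F)}$ is the pointwise $|SOT|$-type limit of a sequence in $B_{\mathcal{F}^+(E^n;F)}$. As $E$ is reflexive and $F$ is Dedekind complete, Theorem \ref{teoret2} then gives (1)$\iff$(2). In the same way, Proposition \ref{apretpol} supplies the approximating sequence of positive polynomials with finite type symmetric multilinear maps, and Theorem \ref{teoretpol2} yields (3)$\iff$(4).

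It remains to prove (2)$\Rightarrow$(3), and I would do this directly. Let $P\in\mathcal{P}^+(^nE;F)$. By definition its symmetric $n$-linear operator $T_P$ is positive, so $T_P\in\mathcal{L}^+(E^n;F)$, and by (2) it is weakly sequentially continuous. If $x_k\cvf x$ in $E$, take the diagonal sequences $x_{j,k}=x_k$ for $j=1,\dots,n$. Then
\[
P(x_k)=T_P(x_k,\dots,x_k)\longrightarrow T_P(x,\dots,x)=P(x),
\]
so $P$ is weakly sequentially continuous. This step needs nothing beyond the definitions.

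I expect the main obstacle to be checking that the hypotheses of Theorems \ref{teoret2} and \ref{teoretpol2} really hold. The approximating sequences must lie in the unit balls, and for polynomials the finite type condition must hold for $T_{P_k}$. Both points are exactly the conclusions of Propositions \ref{propapret} and \ref{apretpol}, so the proof reduces to citing them correctly.
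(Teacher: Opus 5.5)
Your proof is correct and follows the argument the paper intends: Proposition~\ref{propapret}(1) with Theorem~\ref{teoret2} gives (1)$\iff$(2), Proposition~\ref{apretpol} with Theorem~\ref{teoretpol2} gives (3)$\iff$(4), and (2)$\Rightarrow$(3) comes from applying weak sequential continuity of the positive operator $T_P$ along the diagonal. The paper gives no separate proof and presents the result as the lattice analogue of Corollary~\ref{corprob}, so your assembly of these results is exactly the intended argument.
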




We showed in Section 3 that all positive bilinear operators from $\ell_4 \times \ell_4$ into $L_1([0,1])$ are norm-attaining while there are non-norm attaining bilinear operators from $\ell_4 \times \ell_4$ into $L_1([0,1])$.  Combining this example with our previous results, we obtain the corresponding weak sequential continuity dichotomy: every positive bilinear operator from $\ell_4\times \ell_4$ into $L_1([0,1])$ is weakly sequentially continuous, while some bilinear operators from $\ell_4\times \ell_4$ into $L_1([0,1])$ are not. This example, however, relies on atomic domain spaces. One might therefore wonder whether a similar phenomenon could be obtained for positive bilinear operators defined on non-atomic Banach lattices. The following example shows that, at least in its most direct form, such an extension is not possible. More precisely, positivity alone does not guarantee weak sequential continuity even for bilinear forms on non-atomic $L_p$-spaces:

\begin{example} \rm The positive bilinear form
$A:L_4[0,1]\times L_4[0,1]\longrightarrow \mathbb R$
given by
$$A(f,g)=\int_0^1 f(t)g(t)\,dt$$
is not weakly sequentially continuous. Indeed, if $(r_n)_n$ denotes the Rademacher's sequence, then $r_n \cvf 0$ in $L_4([0,1])$, but
$\displaystyle A(r_n,r_n)=\int_0^1 r_n(t)^2\,dt=1$
for every $n\in\mathbb N$. 
\end{example}

\medskip

The previous example shows that, in the non-atomic setting, positivity alone is not enough to guarantee weak sequential continuity. Thus, a natural problem is to identify structural assumptions on the domain and range Banach lattices under which positive multilinear operators still behave, with respect to weak sequential continuity, as in the atomic examples discussed above. We finish the paper with the following question.

\medskip

\noindent{\bf Question 2:}
Let $E_1,\ldots,E_n$ and $F$ be Banach lattices. Under which additional assumptions on $E_1,\ldots,E_n$ and $F$ is every positive $n$-linear operator
$A:E_1\times\cdots\times E_n\longrightarrow F$
weakly sequentially continuous?

\bigskip

\noindent{\bf Acknowledgments:} The authors would like to thank Qingying Bu for recommending reference \cite{bu2015}.

\medskip

\paragraph*{Declaration on the use of generative AI.}
The original research questions and conceptual framework of this paper were developed by the authors. OpenAI's ChatGPT was used to assist in obtaining the characterization of the sequentially positive approximation property in Proposition \ref{caracspap}, as well as to assist with language editing and improve the clarity and exposition of the manuscript. The resulting mathematical arguments and suggested revisions were carefully checked, corrected, and revised by the authors, who take full responsibility for the content of the manuscript.

\noindent L. A. Garcia\\
Instituto de Ci\^encias Exatas\\
Universidade Federal de Juiz de Fora\\
36.036-900 – Juiz de Fora – Brazil\\
e-mail: garcia.luis@ufjf.br

\medskip

\noindent J. L. P. Luiz\\
Instituto Federal do Norte de Minas Gerais\\
Campus de Ara\c cua\'i\\
39.600-00 -- Ara\c cua\'i -- Brazil\\
e-mail: lucasvt09@hotmail.com

\medskip

\noindent V. C. C. Miranda\\
Departamento de Matemática\\
Instituto de Ciências Matemáticas e de Computação \\
Universidade de São Paulo \\
13566-590 -- São Carlos - SP -- Brazil  \\
e-mail: viniciusmiranda@icmc.usp.br

\end{document}